\documentclass[letterpaper,11pt,reqno]{amsart}


\usepackage{amsmath}%
\usepackage{amsfonts}%
\usepackage{amssymb}%
\usepackage{diagmac2} 
\usepackage{stmaryrd} 
\usepackage{comment} 

\usepackage{MnSymbol}

\usepackage{scalerel}
\usepackage{verbatim}

\usepackage{enumerate}

\DeclareMathAlphabet{\mathpzc}{OT1}{pzc}{m}{it}

\newtheorem{theo}{Theorem}[section]  
\newtheorem{theorem}[theo]{theorem}
\newtheorem{cor}[theo]{Corollary}
\newtheorem{prop}[theo]{Proposition}

\theoremstyle{definition}
\newtheorem{definition}[theo]{Definition}
\newtheorem{example}[theo]{Example}

\newtheorem{problem}{Problem}

\SetSymbolFont{stmry}{bold}{U}{stmry}{m}{n}

\newcommand{\Alg}[1]{\mathbf{#1}}
\newcommand{\alg}[1]{\mathsf{#1}}
\newcommand{\var}[1]{\mathcal{#1}}

\newcommand{\Qvar}[1]{\mathcal{Q}(#1)}

\newcommand{\qvar}{\mathcal{Q}}
\newcommand{\vvar}{\mathcal{V}}

\newcommand{\Pvar}{\mathcal{P}}

\newcommand{\ruleR}{\mathsf{r}}
\newcommand{\Rules}{\mathsf{R}}

\newcommand{\Lang}{\mathcal{L}}

\newcommand{\CPC}{\mathsf{CPC}}
\newcommand{\IPC}{\mathsf{IPC}}

\newcommand{\Int}{\mathsf{Int}}
\newcommand{\KF}{\mathsf{K4}}
\newcommand{\KP}{\mathsf{KP}}
\newcommand{\ML}{\mathsf{ML}}

\newcommand{\BDn}{\mathsf{BD}_n}

\newcommand{\DS}{\mathsf{S}}
\newcommand{\rules}{\mathsf{R}}
\newcommand{\AllRules}{\mathsf{Rls}}  
\newcommand{\Rule}{\mathsf{r}}

\newcommand{\LogL}{\mathsf{L}}

\newcommand{\For}{\mathsf{For}}
\newcommand{\Con}{\mathcal{C}}
\newcommand{\Vars}{\mathcal{P}}
\newcommand{\Ax}{\mathpzc{Ax}}

\newcommand{\bydef}{:=}

\newcommand{\Frm}{\mathcal{F}}

\newcommand{\HSCpl}{\it HSCpl} 
\newcommand{\SCpl}{\mathcal{SCpl}}

\newcommand{\CDS}{{\DS_c}}
\newcommand{\IDS}{{\DS_i}}
\newcommand{\MP}{\mathsf{mp}}

\newcommand{\FAlg}[2]{\Alg{F}_{#1}(#2)}

\newcommand{\set}[2]{\{ {#1}: {#2}\}}

\newcommand{\ds}[2]{\langle {#1}, {#2}\rangle}
\newcommand{\Ds}{\ds{\Ax}{\Rules}}

\newcommand{\LogDs}[1]{\mathsf{L}(#1)}

\newcommand{\SC}[1]{\widetilde{#1}}
\newcommand{\LogDS}{\LogDs{\DS}}

\newcommand{\SCDS}{\widetilde{\DS}}
\newcommand{\SCR}{\SC{\Rules}(\DS)}

\newcommand{\one}{\mathbf{1}}
\newcommand{\zero}{\mathbf{0}}

\newcommand{\Heyt}{\mathcal{H}}

\newcommand{\LogExt}{\lesssim}
\newcommand{\DedExt}{\lessapprox}

\newcommand{\LogEqv}{\sim}
\newcommand{\DedEqv}{\approx}

\newcommand{\admDS}{\ \raisebox{2pt}{\scaleobj{.7}{|\!\!\!\sim_{\DS}}} \ }


\title[Hereditarily Structurally Complete Deductive Systems]{Hereditarily Structurally Complete Superintuitionistic Deductive Systems}

\author{Alex Citkin}

\address[A.~Citkin]{
Metropolitan Telecommunications, USA}
\email{acitkin@gmail.com}

\begin{document}

 


\begin{abstract} The paper studies hereditarily complete superintuitionistic deductive systems, that is, the deductive system which logic is an extension of the intuitionistic propositional logic. It is proven that for deductive systems a criterion of hereditary structurality - similar to one that exists for logics - does not exists. Nevertheless, it is proven that many standard superintuitionistic logics (including Int) can be defined by a hereditarily structurally complete deductive system.  
\end{abstract}

\keywords{Superintuitionistic logic, deductive system, admissible rule, structural completeness, hereditary structural completeness, Heyting algebra, primitive quasivariety, weakly projective algebra.
}

\maketitle

\section{Introduction}

The notion of structural completeness was introduced by W.~Pogorzelski in \cite{Pogorzelski_Structural_1971}: a (propositional) deductive system\footnote{Pogorzelski is using the term "calculus", but we prefer the term "deductive system", saving the term "calculus" for the deductive systems with finite sets of axiom schemes and rules. In \cite{Olson_Raftery_Alten_2008} a term "formal system" has been used.} $\DS$ is \textit{structurally complete} if every admissible in $\DS$ rule is derivable in $\DS$ (and we denote this by $\DS \in \SCpl$).

First, let us clarify the above definition. Let $\For$ be a set of all (propositional) formulas built in a usual way from an infinite countable set $\Vars$ of (propositional) variables and from a finite set  $\Con$ of connectives. A (structural) \textit{rule} is an ordered pair $\Gamma/B$, where $\Gamma$ is a finite (maybe empty) set of formulas, and $B$ is a formula. \textit{Deductive system} is understood as a pair $\Ds$, where $\Ax$ is a set of formulas and $\Rules$ is a set of rules. And each deductive system $\DS$ defines in a natural way a consequence relation denoted by $\vdash_\DS$ and a logic $\set{A \in \For}{\vdash_\DS A}$ denoted by $\LogDS$. Given a deductive system $\DS \bydef \Ds$, a rule $\ruleR \bydef \Gamma/B$ is \textit{admissible} in $\DS$ if $\LogDS = \LogDs{\ds{\Ax}{\Rules + \ruleR}}$, and $\ruleR$ is \textit{derivable} in $\DS$ if $\Gamma \vdash_\DS B$, that is, if $\vdash_{\ds{\Ax}{\Rules}} \ = \ \vdash_{\ds{\Ax}{\Rules + \ruleR}}$. In other words, a rule $\ruleR$ is admissible in a deductive system $\DS$ if extending $\Rules$ by $\ruleR$ does not change the logic of $\DS$, while $\ruleR$ is derivable in $\DS$ if extending $\Rules$ by $\ruleR$ does not change derivability in $\DS$. 

For instance, let us consider classical deductive system (calculus) $\CDS \bydef \ds{\Ax^c}{\MP}$, where $\Ax^c$ is a set of classical axiom schemes and  $\MP \bydef A, A \to B/B$ is Modus Ponens (see e.g.  \cite{Mendelson_Introduction_2015}[Section 1.4]); and let us consider intuitionistic deductive system $\CDS \bydef \ds{\Ax^i}{\MP}$, where $\Ax^i$ is a set of intuitionistic axiom schemes (see e.g. \cite{Mendelson_Introduction_2015}[Section 1.6]). A consequence relation $\vdash_\CDS$ is structurally complete, while consequence relation $\vdash_\IDS$ is not structurally complete, for the rule $\neg p \to (q \lor r)/(\neg p \to q) \lor (\neg p \to r)$ is admissible in $\vdash_\IDS$ but not derivable in it.

Soon after the notion of structural completeness had been introduced, Dzik and Wronski observed \cite{Dzik_Wronski_1973} that not only the deductive system $\DS_{l} \bydef \ds{\Ax^i + ((p \to q) \lor (q \to p))}{\MP}$ is structurally complete, but all its extensions are structurally complete too; that is, $\DS_l$ is \textit{hereditarily structurally complete} (and we denote this by $\DS_l \in \HSCpl$). In \cite{Citkin_1978} Citkin had obtained a criterion of hereditary structural completeness for the deductive systems $\ds{\Ax}{\MP}$, where $\Ax^i \subseteq \Ax$. Later, in \cite{Rybakov_Hereditary_1995} Rybakov had proven a similar criterion for normal extensions of modal logic $\KF$. More recently, Olson, Raftery and van Alten (see \cite{Olson_Raftery_Alten_2008} ) had established hereditarily structural completeness for a range of substructural logics. For fuzzy logics and their fragments the hereditary structural completeness was studied by Cintula and Metcalfe \cite{Cintula_Metcalfe_2009}. 
In \cite{Slomczynska_Algebraic_2012} S{\l}omczy{\'n}ska proved that $\{\leftrightarrow,\neg\neg\}$-fragment of intuitionistic propositional calculus is hereditarily structural complete.

Let us note that admissibility of a rule in a deductive system depends exclusively on the logic of the system: a rule $\Gamma/B$ is admissible in a deductive system $\DS$ if and only if for every (uniform) substitution $\sigma$ (of formulas for variables) we have $\sigma(B) \in \LogDS$ as long as $\sigma(A) \in \LogDS$ for every $A \in \Gamma$. Hence, if $\ruleR$ is admissible in $\ds{\Ax}{\Rules}$, it is admissible in $\ds{\Ax}{\Rules + r}$ too, and $\LogL(\Ds) = \LogL(\ds{\Ax}{\Rules + \ruleR})$. Thus, for every deductive system $\DS \bydef \ds{\Ax}{\Rules}$ there is a structurally complete deductive system $\SCDS$ having the same logic as $\DS$ and in which all admissible in $\DS$ rules are derivable: we can take $\SCDS = \ds{\Ax}{\SC{\Rules}}$, where $\SC{\Rules}$ is the set of all rules admissible in $\DS$. Naturally, $\SCDS$ is structurally complete, and we say that $\SCDS$ is a \textit{structural completion}\footnote{In \cite{Rybakov_Book} the term "admissible closure" is used.} of $\DS$. Clearly, a deductive system $\DS$ is structurally complete if and only if $\DS = \SCDS$. Moreover, given a logic $\LogL$, we can consider a deductive system $\SC{\LogL} \bydef \ds{\LogL}{\SC{\Rules}}$, where $\SC{\Rules}$ is the set of all rules admissible in $\LogL$, and we say that $\SC{\LogL}$ is a \textit{structural completion of logic} $\LogL$.

As we know, the structural completion of a logic is, of course, structurally complete. On the other hand, every hereditarily structural complete deductive system $\DS$ is a structural completion of its logic $\LogL(\DS)$. Thus, if we want study the hereditarily structural complete deductive systems, it is natural to ask structural completions of which logics are hereditarily structural complete. In this paper we focus primarily on superintuitionistic logics, that is on logics extending intuitionistic propositional logic $\Int$ . 

First (in Section \ref{sec-general}) we consider some general properties of hereditarily structurally complete deductive systems. Then, in Section \ref{sec-admder}, we recall definitions and properties of admissible and derivable rules. Section \ref{sec-SCpl} is dedicated to studying hereditarily structurally complete deductive systems, and here we prove the main theorem (Theorem \ref{th-HSCplB}) that establishes a link between hereditary structural completeness and inheritance of the bases of admissible rules. And then (in Section \ref{sec-Ext}) we consider some well known superintuitionistic logics from the standpoint of hereditary structural completeness of their structural completions. Some results are summarized in the Table \ref{table1} (where $\LogL \in \HSCpl$ means $\ds{\LogL}{\MP} \in \HSCpl$).

\begin {table}[h] 
\begin{tabular}{lllll}
\hline
$\LogL$&Description&$\LogL \in \HSCpl$ & $\SC{\LogL} \in \HSCpl$ & Definition\\
\hline 
$\Int$ & (intuitionistic logic) & No & Yes &\\
$\mathsf{LC}$ &(G\"{o}del - Dummett logic)& Yes & Yes&\cite[Table 4.1]{Chagrov_Zakh} \\
$\BDn$ & Logic of finite of depth $n$ & No $(n > 1)$ & No $(n > 3)$&\cite[Table 4.1]{Chagrov_Zakh} \\
$\mathsf{G_n}$ & (G\"{o}del logics)& Yes & Yes & $\mathsf{LC} \cap \mathsf{BD}_{n}$\\
$\mathsf{KC}$ &(Yankov logic) & No  & Yes & \cite[Table 4.1]{Chagrov_Zakh}\\
$\mathsf{KP}$ &(Kreisel-Putnam logic) & No & No& \cite[Table 4.1]{Chagrov_Zakh} \\
$\mathsf{ML}$& (Medvedev logic) & No  & No & \cite[Section 2.9]{Chagrov_Zakh}\\
$\mathsf{V}$ &(logic of Visser's rules)& Yes & Yes & \cite{Iemhoff_Intermediate_2005}\\
$\mathsf{RN}$ &(logic of Rieger-Nishimura ladder)& No & Yes & \cite{Bezhanishvili_N_G_de_Jongh_2008}\\
$\mathsf{Sm}$ &(Smetanich logic)& Yes & Yes & \cite[Table 4.1]{Chagrov_Zakh}\\
\hline
\end{tabular}
\caption {Intermediate Logics and Their Structural Completions.} \label{table1}
\end{table}

\section{Deductive Systems and Consequence Relations} \label{sec-general}

\subsection{Deductive Systems}
In this section we recall the basic properties of (propositional) deductive systems and their links to consequence relations. 


\textbf{Language.} A \textit{propositional language} $\Lang$ is understood as a finite set of connectives with specified finite arities. The notion of $\Lang$-formula over a fixed countably infinite set of propositional variables $\Pvar$ is defined in the usual manner. $\For$ denotes the set of all $\Lang$-formulas. A mapping $\sigma: \Pvar \to  \For$ is called a \textit{substitution}. If $\sigma$ is a substitution and $A$ is a formula,  by $\sigma(A)$ we denote a result of simultaneous replacement of each occurring of variable $p$ in $A$ with $\sigma(p)$. If $\Gamma$ is a set of formulas, by $\sigma(\Gamma)$ we denote a set $\set{\sigma(A)}{A \in \Gamma}$.	We say that a formula $B$ is a \textit{substitution instance of a formula} $A$ if there is a substitution $\sigma$ such that $B = \sigma(A)$.

\textbf{Inference Rules. }An ordered couple $\langle \Gamma,A \rangle$, where $\Gamma \subseteq \For$ is a finite (maybe empty) set of formulas and $A \in \For$ is a formula, is called a \textit{(structural inference) rule}. We use a more custom notation: $\Gamma/A$. Formulas $\Gamma$ are called \textit{premises}, while formula $A$ is called a \textit{conclusion}. The set of all rules we denote by $\AllRules$.

If $\ruleR \bydef \Gamma/A$ is a rule and $\sigma$ is a substitution, then $\sigma(\Gamma)/\sigma(A)$ is said to be a $\sigma$-\textit{substitution instance of rule} $\ruleR$ (denoted by $\sigma(\ruleR)$) and we omit reference to a particular substitution when no confusion arises. 

\textbf{Deductive System.} An ordered couple $\Ds$, where $\Ax \subseteq \For$ and $\Rules \subseteq \AllRules$ are non-empty sets respectively of formulas and of rules, is called a \textit{deductive system}. Let $\DS = \Ds$ be a deductive system. Then $\Ax$ is a set of \textit{axioms} of $\DS$ and $\Rules$ is a set of \textit{rules of }$\DS$. And, given a finite set of formulas $\Gamma$ and a formula $A$, a sequence of formulas $B_1,\dots,B_n$ is called an $\DS$\textit{-inference} of $A$ from $\Gamma$ if the following holds
\begin{itemize}
\item[(i1)] $B_n = A$;
\item[(i2)] for all $i=1,\dots,n$ either $B_i \in \Gamma$, or $B_i$ is a substitution instance of an axiom from $\Ax$, or there is a substitution instance $\Delta/B_i$ of a rule from $\Rules$ such that $\Delta \subseteq \{B_1,\dots,B_{i-1}\}$. 
\end{itemize} 
If there is an $\DS$-inference of $A$ from $\Gamma$, we say that $A$ is an $\DS$\textit{-consequence} of $\Gamma$ and we denote this by $\Gamma \vdash_\DS A$. If $\Gamma = \varnothing$, we write $\vdash_\DS A$ and say that $A$ is $\DS$-\textit{derivable} or that $A$ is an $\DS$-\textit{theorem}. The set of all $\DS$-theorems is called a \textit{logic of deductive system} $\DS$, and we denote this set by $\LogL(\DS)$.

\begin{example} As examples, we will often use the following two deductive systems: $\IPC \bydef \ds{\Ax^i}{\MP}$ and $\CPC \bydef \ds{\Ax^c}{\MP}$, where $\Ax^i$ and $\Ax^c$ are sets of axiom schemes of intuitionistic and classical propositional calculi from \cite[Section 1]{{Mendelson_Introduction_2015}}, and $\MP$ is Modus Ponens.
\end{example}

\subsection{Consequence Relations}


A (finitary structural) \textit{consequence relation} is a relation $\vdash$ between finite sets of formulas and formulas satisfying the following conditions: for any finite sets of formulas $\Gamma,\Gamma'$ and each formula $A \in \For$
\begin{itemize}
\item[(a)] $A \vdash A$
\item[(b)] if $\Gamma \vdash A$ then $\Gamma \cup \Gamma' \vdash A$
\item[(c)] if $\Gamma \vdash B$ for every $B \in \Gamma'$ and $\Gamma' \vdash A$, then $\Gamma 
\vdash A$
\item[(d)] if $\Gamma \vdash A$ the $\sigma(\Gamma) \vdash \sigma(A)$ for every substitution $\sigma$.
\end{itemize}

Let us note that relation $\vdash_\DS$ for a given deductive system $\DS$ satisfies the above definition and, hence,  $\vdash_\DS$ is a \textit{consequence relation defined by }$\DS$. On the other hand, given a consequence relation $\vdash$, one can take a deductive system $\DS \bydef \ds{\set{A \in \For}{\vdash A}}{\set{\Gamma/B \in \AllRules}{\Gamma \vdash B}}$ and verify that $\vdash_\DS \ = \ \vdash$, that is, every consequence relation can be defined by a deductive system. 

Given two consequence relations $\vdash$ and $\vdash'$ we say that $\vdash'$ \textit{extends} $\vdash$, or that $\vdash'$ is an \textit{extension} of $\vdash$, in written $\vdash \ \leq \ \vdash'$, if $\vdash \ \subseteq \ \vdash'$. We say that $\vdash'$ \textit{strongly extends} $\vdash$, or that $\vdash'$ is a \textit{proper extension} of $\vdash$, in written $\vdash \ < \ \vdash'$, if $\vdash \ \subsetneq \ \vdash'$. It is clear that the set of all extensions of a given consequence relation is closed under arbitrary meets, and, therefore, it forms a complete lattice. An extension $\vdash'$ of a consequence relation $\vdash$ is said to be \textit{axiomatic} if there is a set of formulas $\Gamma$ such that $\vdash'$ is the smallest extension of $\vdash$ having every formula from $\Gamma$ as a theorem. In terms of deductive systems, $\vdash'$ is an axiomatic extension of $\vdash_\DS$, where $\DS \bydef \Ds$, if $\vdash'$ can be defined by a deductive system $\ds{\Ax + \Gamma}{\Rules}$ for some $\Gamma \subseteq \For$ (comp. \cite{Olson_Raftery_Alten_2008}).    

\begin{example} Then $\vdash_{\CPC}$ is an axiomatic extension of $\vdash_{\IPC}$, because $\vdash_{CPC}$ can be defined by $\ds{\Ax^i + (\neg\neg p \to p)}{\MP}$.
\end{example}

Let $\DS_1$ and $\DS_2$ be deductive systems. We say that $\DS_2$ is logical extension of $\DS_1$ (in symbols $\DS_1 \LogExt \DS_2$) if $\LogL(\DS_1) \subseteq \LogL(\DS_2)$; and we say that $\DS_2$ is a \textit{deductive extension} of $\DS_1$ (and we denote this by $\DS_1 \DedExt \DS_2$) if $\vdash_{\DS_1} \ \subseteq \ \vdash_{\DS_2}$. We also say that $\DS_1$ and $\DS_2$ are \textit{logically equal} (in symbols $\DS_1 \LogEqv \DS_2$) if $\LogL(\DS_1) = \LogL(\DS_2)$, and $\DS_1$ and $\DS_2$ are \textit{deductively equal} and we write $\DS_1 \DedEqv \DS_2$, if $\vdash_{\DS_1}\  = \ \vdash_{\DS_2}$.

\begin{example} For instance, $\IPC \DedExt \CPC$. Moreover, $\CPC$ is an axiomatic extension of $\IPC$: one can take $\Gamma = \{ (\neg\neg p \to p)\}$.
\end{example}

Let us note the following, rather simple property that we will need in the sequel.

\begin{prop}\label{pr-addtheorems} Let $\DS \bydef \Ds$ be a deductive system. Then
\begin{equation}
\Ds \DedEqv \ds{\LogL(\DS)}{\Rules}.  \label{eq-addtheorems}
\end{equation}
\end{prop}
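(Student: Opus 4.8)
The plan is to show the two consequence relations coincide by proving mutual inclusion, and the only content is that adding all theorems of $\DS$ as axioms does not enlarge derivability, while removing the original axioms (keeping only the theorems) does not shrink it. Concretely, write $\DS' \bydef \ds{\LogL(\DS)}{\Rules}$ and show $\vdash_\DS \ = \ \vdash_{\DS'}$.

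First I would check $\vdash_\DS \ \subseteq \ \vdash_{\DS'}$. Every axiom $A \in \Ax$ satisfies $\vdash_\DS A$, so $A \in \LogL(\DS)$, i.e. every axiom of $\DS$ is an axiom of $\DS'$; since the rules are the same, any $\DS$-inference of $A$ from $\Gamma$ is verbatim a $\DS'$-inference (each step that is a substitution instance of an axiom of $\DS$ is, in particular, a substitution instance of a formula in $\LogL(\DS)$, using that $\LogL(\DS)$ is closed under substitution). Hence $\Gamma \vdash_\DS A$ implies $\Gamma \vdash_{\DS'} A$.

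For the converse $\vdash_{\DS'} \ \subseteq \ \vdash_\DS$, take a $\DS'$-inference $B_1,\dots,B_n$ of $A$ from $\Gamma$ and replace, step by step, each appeal to ``$B_i$ is a substitution instance of an axiom of $\DS'$'', i.e. of a formula $C \in \LogL(\DS)$ with $B_i = \sigma(C)$. Since $C \in \LogL(\DS)$ there is a $\DS$-inference $\vdash_\DS C$; applying $\sigma$ to it (condition (d) for $\vdash_\DS$, or directly: a substitution instance of a $\DS$-inference is again a $\DS$-inference, as axioms and rules of $\DS$ are schematic / structural) yields $\vdash_\DS \sigma(C) = B_i$. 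Splicing these finitely many $\DS$-derivations of the $B_i$ into the sequence, using that the remaining steps already referred to $\Gamma$, to axioms of $\DS$, or to rules in $\Rules$, produces a genuine $\DS$-inference of $A$ from $\Gamma$; formally this is clause (c) (cut/transitivity) for $\vdash_\DS$ together with (b) (weakening). Therefore $\Gamma \vdash_{\DS'} A$ implies $\Gamma \vdash_\DS A$, and the two relations are equal, which is exactly \eqref{eq-addtheorems}.

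The argument is entirely routine; the only point requiring a little care — the ``main obstacle,'' such as it is — is making the substitution-instance bookkeeping precise: one must use that $\LogL(\DS)$ is closed under substitution (so that substitution instances of the new axioms are still handled) and that inference sequences of $\DS$ are themselves closed under applying a substitution, so that a derivation witnessing $C \in \LogL(\DS)$ can be transported to a derivation of any instance $\sigma(C)$ and inserted into the larger inference. Both facts are immediate from the schematic nature of $\Ax$ and the structurality of the rules in $\Rules$, so no real difficulty arises.
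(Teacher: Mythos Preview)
Your proof is correct and follows essentially the same approach as the paper: the paper only sketches the nontrivial inclusion $\vdash_{\DS'} \subseteq \vdash_\DS$ by splicing a $\DS$-derivation of each theorem used as an axiom into the given $\DS'$-inference, which is exactly your argument. You are simply more careful than the paper in handling the substitution-instance bookkeeping and in explicitly noting the easy inclusion $\vdash_\DS \subseteq \vdash_{\DS'}$.
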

This proposition simply means that extending the set of axioms by formulas derived in $\DS$ does not change the consequence relation defined by $\DS$.
\begin{proof} The proof immediately follows from the definition of inference. Indeed, let $A_1,\dots,A_k,\dots,A_n$ be an inference of $A_n$ from a set of formulas $\Gamma$ and $A_k \in \LogL(\DS)$. Then there is an inference $B_1,\dots, B_m, A_k$ of $A_k$ from the empty set of formulas. Immediately from the definition of inference we can see that $A_1,\dots,B_1,\dots, B_m, A_k,\dots,A_n$ is an inference of $A_n$ from $\Gamma$. Thus, any inference in $\ds{\LogL(\DS)}{\Rules}$ can be converted into an inference in $\DS$.
\end{proof}

\section{Admissible and Derivable Rules} \label{sec-admder}

The goal of this section is to recall the notions of admissibility and derivability of rules in deductive systems. 

\subsection{Admissible Rules} We start by recalling the notion of a rule admissible in a given deductive system.

\begin{definition} Let $\DS \bydef \Ds$ be a deductive system. A rule $\Rule$ is called \textit{admissible in} $\DS$, or in the corresponding consequence relation $\vdash_\DS$, if logic $\LogL(\DS)$ is closed under $\ruleR$, that is, $\LogL(\Ds) = \LogL(\ds{\Ax}{\Rules + \ruleR})$ (or $\Ds \LogEqv \ds{\Ax}{\Rules + \ruleR}$).
\end{definition}

If $\DS = \Ds$ is a deductive system, by $\SC{\Rules}(\DS)$ we denote the set of all rules admissible in $\DS$. Since adding an admissible rule to a deductive system does not change the logic of this system and all rules from $\Rules$ are trivially admissible in $\DS$ (that is, $\Rules \subseteq \SC{\Rules}(\DS)$), we have
\begin{equation} 
\DS \LogEqv \ds{\Ax}{\SC{\Rules}}. \label{eq-admnotext}
\end{equation}

\begin{example} Rule $\ruleR \bydef \neg (p \to (q \lor r))/((\neg p \to q) \lor (\neg p \to r))$ - a Harrop rule - is admissible in $\IPC$ because, as it follows from \cite{Harrop_Concerning_1960}, $\LogL(\ds{\Ax^i}{\MP}) = \LogL(\ds{\Ax^i}{\MP + \ruleR})$.
\end{example}

The following Proposition gives a well known alternative intrinsic characterization of admissibility.

\begin{prop} A rule $\ruleR \bydef \Gamma/A$ is admissible in a deductive system $\DS$ if and only if for every $\sigma$-substitution instance of $\ruleR$
\begin{equation}
\sigma(\Gamma) \subseteq \LogL(\DS) \text{ yields } \sigma(A) \in \LogL. \tag{ADM} \label{ADM} 
\end{equation} 
\end{prop}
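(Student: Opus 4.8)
The plan is to unfold the definition of admissibility and prove the two inclusions $\LogL(\DS) \subseteq \LogL(\ds{\Ax}{\Rules + \ruleR})$ and $\LogL(\ds{\Ax}{\Rules + \ruleR}) \subseteq \LogL(\DS)$ separately. The first inclusion is free: every $\DS$-inference is also a $\ds{\Ax}{\Rules + \ruleR}$-inference, since the latter system has strictly more rules at its disposal. Consequently, $\ruleR$ is admissible in $\DS$ if and only if the reverse inclusion $\LogL(\ds{\Ax}{\Rules + \ruleR}) \subseteq \LogL(\DS)$ holds, so it suffices to show that this reverse inclusion is equivalent to \eqref{ADM}.

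For the implication ``\eqref{ADM} $\Rightarrow$ admissibility'', I would fix a formula $B$ with $\vdash_{\ds{\Ax}{\Rules + \ruleR}} B$, pick a $\ds{\Ax}{\Rules + \ruleR}$-inference $B_1,\dots,B_n = B$ from $\varnothing$, and prove by strong induction on $i$ that every $B_i$ belongs to $\LogL(\DS)$. According to clause (i2) there are three cases. If $B_i$ is a substitution instance of an axiom of $\DS$, then $B_i$ has a one-step $\DS$-inference. If $B_i$ is obtained by a substitution instance $\Delta/B_i$ of a rule of $\Rules$ with $\Delta \subseteq \{B_1,\dots,B_{i-1}\}$, then by the induction hypothesis each formula of the finite set $\Delta$ is $\DS$-derivable, and concatenating their $\DS$-inferences and appending the step $\Delta/B_i$ yields a $\DS$-inference of $B_i$. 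Finally, if $B_i = \sigma(A)$ arises from the substitution instance $\sigma(\Gamma)/\sigma(A)$ of $\ruleR$ with $\sigma(\Gamma) \subseteq \{B_1,\dots,B_{i-1}\}$, then the induction hypothesis gives $\sigma(\Gamma) \subseteq \LogL(\DS)$, and \eqref{ADM} yields $B_i = \sigma(A) \in \LogL(\DS)$. Taking $i = n$ gives $B \in \LogL(\DS)$, as required.

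For the converse, I would assume $\ruleR$ is admissible and take an arbitrary substitution $\sigma$ with $\sigma(\Gamma) \subseteq \LogL(\DS)$. Every formula of the finite set $\sigma(\Gamma)$ is then derivable in $\ds{\Ax}{\Rules + \ruleR}$ (being already $\DS$-derivable); concatenating these inferences and appending the step $\sigma(\Gamma)/\sigma(A)$, which is a legitimate application of the rule $\ruleR$ in $\ds{\Ax}{\Rules + \ruleR}$, exhibits a $\ds{\Ax}{\Rules + \ruleR}$-inference of $\sigma(A)$ from $\varnothing$. Hence $\sigma(A) \in \LogL(\ds{\Ax}{\Rules + \ruleR}) = \LogL(\DS)$, which is \eqref{ADM}.

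I do not anticipate a real obstacle: the whole argument is a routine induction on the length of an inference, together with the elementary observation (essentially the content of Proposition~\ref{pr-addtheorems}) that finitely many derivations from $\varnothing$ can be concatenated. The only points deserving care are that the induction must be \emph{strong} (the hypothesis is needed for all $B_j$ with $j < i$), and the boundary case $\Gamma = \varnothing$, where \eqref{ADM} reads ``$\sigma(A) \in \LogL(\DS)$ for every $\sigma$''; this matches admissibility of the axiomatic rule $/A$ once one notes that $\LogL(\DS)$ is closed under substitution.
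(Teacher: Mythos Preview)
Your proposal is correct and follows essentially the same route as the paper: both observe that the inclusion $\LogL(\DS)\subseteq\LogL(\ds{\Ax}{\Rules+\ruleR})$ is automatic, prove ``\eqref{ADM} $\Rightarrow$ admissibility'' by induction on the length of a $\ds{\Ax}{\Rules+\ruleR}$-inference, and establish the converse by concatenating $\DS$-inferences of the $\sigma(A_i)$ and appending one application of $\ruleR$. The only cosmetic difference is that the paper phrases the converse contrapositively (assuming \eqref{ADM} fails and exhibiting a formula in $\LogL(\DS')\setminus\LogL(\DS)$), whereas you argue it directly; your explicit three-case strong induction is also a bit more detailed than the paper's one-line appeal to induction.
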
 
\begin{proof} Let $\DS \bydef \ds{\Ax}{\Rules}$, $\DS' \bydef \ds{\Ax}{\Rules + \ruleR}$. Then, obviously, $\LogL(\DS) \subseteq \LogL(\DS')$ and we need to show that \eqref{ADM} is equivalent to $\LogL(\DS') \subseteq \LogL(\DS)$.
Assume that \eqref{ADM} holds. By simple induction on length of inference one can demonstrate that any $\DS'$-inference from $\varnothing$ is, by the same token, an $\DS$-inference from $\varnothing$. Thus, $\LogL(\DS') \subseteq \LogL(\DS)$. 

Now, assume that \eqref{ADM} does not hold and suppose $\Gamma = \{A_1,\dots,A_n\}$. Then for some substitution $\sigma$, $\sigma(\Gamma) \in \LogL(\DS)$ and $\sigma(A) \notin \LogL(\DS)$. Due to $\sigma(A_i) \in \LogL(\DS)$ for every $i=1,\dots,n$, by the definition of $\LogL(\DS)$, for each formula $\sigma(A_i)$ there is an $\DS$-inference $I_i$ of $A_i$ from $\varnothing$. 
Clearly, every $I_i, i=1,\dots,n$ is at the same time an $\DS'$-inference of $\sigma(A)$ from $\varnothing$. Hence, $I_1,\dots,I_b,\sigma(A)$ is an $\DS'$-inference of $\sigma(A)$ from $\varnothing$, which means that $\sigma(A) \in \LogL(\DS')$. Thus, $\sigma(a) \in \LogL(\DS')$ and $\sigma(A) \notin \LogL(\DS)$, i.e. $\LogL(\DS) \subset \LogL(\DS')$.

The case $\Gamma = \varnothing$ is trivial.
\end{proof}

Thus, admissibility of rules depends only on logic, that is, the following holds.

\begin{prop} \label{pr-admlogequal} Let $\DS_0$ and $\DS_1$ be logically equal deductive system. Then a rule $\ruleR$ is admissible in $\DS_0$ if and only if $\ruleR$ is admissible in $\DS_1$, that is,
\begin{equation}
\SC{\Rules}(\DS_0) = \SC{\Rules}(\DS_1), \label{eqpr-admlogequal}
\end{equation}
where $\SC{\Rules}(\DS_i)$ denotes the set of all rules admissible in $\DS_i, i=0,1$.
\end{prop}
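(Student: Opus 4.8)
The plan is to obtain this immediately from the intrinsic characterization of admissibility established in the preceding proposition. The crucial point is that condition \eqref{ADM}---that $\sigma(\Gamma) \subseteq \LogL(\DS)$ yields $\sigma(A) \in \LogL(\DS)$ for every substitution $\sigma$---refers to the deductive system $\DS$ only through its logic $\LogL(\DS)$, and not through its axioms or rules as such.

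Concretely, I would fix an arbitrary rule $\ruleR \bydef \Gamma/A$ and write $\DS_i \bydef \ds{\Ax_i}{\Rules_i}$ for $i = 0,1$. Applying the preceding proposition to each $\DS_i$ in turn, the statement ``$\ruleR$ is admissible in $\DS_i$'' is equivalent to ``\eqref{ADM} holds with $\LogL(\DS)$ read as $\LogL(\DS_i)$''. By the hypothesis $\LogL(\DS_0) = \LogL(\DS_1)$, these two instances of \eqref{ADM} are verbatim the same assertion, so one holds precisely when the other does. Hence $\ruleR$ is admissible in $\DS_0$ if and only if it is admissible in $\DS_1$; since $\ruleR$ was arbitrary, this is exactly the set equality \eqref{eqpr-admlogequal}.

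I do not anticipate any genuine obstacle: all the work has already been done in the characterization proposition, and what remains is the observation that its right-hand side is a property of $\LogL(\DS)$ alone. If one preferred to avoid invoking that proposition, one could argue directly that $\LogL(\ds{\Ax_i}{\Rules_i + \ruleR})$ is the smallest set of formulas containing $\LogL(\DS_i)$ and closed under $\ruleR$ and substitution---an operation on $\LogL(\DS_i)$ that does not depend on $\Ax_i$ or $\Rules_i$ individually---so that $\LogL(\DS_0) = \LogL(\DS_1)$ forces $\LogL(\ds{\Ax_0}{\Rules_0 + \ruleR}) = \LogL(\ds{\Ax_1}{\Rules_1 + \ruleR})$ and thus the equivalence of admissibility; but routing the argument through \eqref{ADM} is shorter and cleaner.
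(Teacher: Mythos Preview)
Your proposal is correct and matches the paper's approach exactly: the paper presents this proposition as an immediate consequence of the intrinsic characterization \eqref{ADM} (introduced by ``Thus, admissibility of rules depends only on logic'') and gives no further proof, which is precisely the observation you spell out.
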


Since deductive equality yields logical equality, due to the above Proposition, we can speak about admissibility of a rule for a consequence relation, because, if a rule $\ruleR$ is admissible in $\DS$, then $\ruleR$ is admissible in every deductively equal to $\DS$ system, that is, in every deductive system defining $\vdash_\DS$. 

\begin{prop} \label{pr-DSadmissible} Let $\vdash$ be a consequence relation and $\ruleR$ be a rule. Then the following is equivalent
\begin{itemize}
\item[(a)] $\ruleR$ is admissible for $\vdash$ ;
\item[(b)] $\ruleR$ is admissible in some deductive system defining $\vdash$;
\item[(c)] $\ruleR$ is admissible in every deductive system defining $\vdash$.
\end{itemize}
\end{prop}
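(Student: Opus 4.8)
The plan is to prove the equivalence of (a), (b) and (c) by establishing the cycle $(c)\Rightarrow(b)\Rightarrow(a)\Rightarrow(c)$, leaning entirely on the two preceding propositions. Recall that earlier in the paper it was observed that every consequence relation $\vdash$ is defined by \emph{at least one} deductive system (namely the canonical one $\ds{\set{A\in\For}{\vdash A}}{\set{\Gamma/B\in\AllRules}{\Gamma\vdash B}}$), so the collection of deductive systems defining $\vdash$ is non-empty; this is what makes the implication $(c)\Rightarrow(b)$ legitimate rather than vacuous, and I would mention it explicitly.

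The implication $(c)\Rightarrow(b)$ is then immediate: if $\ruleR$ is admissible in every deductive system defining $\vdash$, then since there is at least one such system, it is admissible in some such system. For $(b)\Rightarrow(a)$ and $(a)\Rightarrow(c)$ I would use the observation that admissibility of $\ruleR$ in a deductive system $\DS$ depends only on $\LogL(\DS)$ together with the logic $\LogL$ cut out by $\vdash$. Concretely, define "$\ruleR$ is admissible for $\vdash$" to mean that the logic $\set{A\in\For}{\vdash A}$ is closed under $\ruleR$, i.e. \eqref{ADM} holds with $\LogL=\set{A\in\For}{\vdash A}$; this is consistent with the comment after Proposition~\ref{pr-admlogequal} that one may speak of admissibility relative to a consequence relation. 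Now if $\DS$ is any deductive system defining $\vdash$, then $\LogL(\DS)=\set{A\in\For}{\vdash_\DS A}=\set{A\in\For}{\vdash A}$, so by the intrinsic characterization \eqref{ADM} the rule $\ruleR$ is admissible in $\DS$ if and only if it is admissible for $\vdash$. This single biconditional delivers both remaining implications at once: from $(b)$, picking the witnessing system, we get admissibility for $\vdash$, which is $(a)$; and from $(a)$, applying the biconditional to an arbitrary $\DS$ defining $\vdash$, we get $(c)$.

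Alternatively, if one prefers to avoid introducing the auxiliary notion "admissible for $\vdash$" as a primitive, I would route everything through Proposition~\ref{pr-admlogequal} directly: any two deductive systems $\DS_0,\DS_1$ both defining $\vdash$ are deductively equal, hence logically equal, hence by \eqref{eqpr-admlogequal} have $\SC{\Rules}(\DS_0)=\SC{\Rules}(\DS_1)$; so "$\ruleR$ admissible in some system defining $\vdash$" and "$\ruleR$ admissible in every system defining $\vdash$" coincide, giving $(b)\Leftrightarrow(c)$, and one takes $(a)$ to be either of these (as the paragraph preceding the proposition already licenses). I expect no real obstacle here — the statement is essentially a bookkeeping consequence of Propositions~\ref{pr-admlogequal} and the existence of the canonical defining system — so the only thing to be careful about is making the non-emptiness of the family of defining systems explicit, since without it $(c)\Rightarrow(b)$ would fail.
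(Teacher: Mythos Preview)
Your proposal is correct and aligns with the paper's approach: the paper gives no explicit proof of this proposition, treating it as an immediate consequence of the preceding paragraph, which already observes (via Proposition~\ref{pr-admlogequal}) that deductively equal systems are logically equal and hence share admissible rules. Your second route---using \eqref{eqpr-admlogequal} directly to get $(b)\Leftrightarrow(c)$ and reading $(a)$ as either of these---is exactly the argument the paper sketches informally, and your care about non-emptiness of the family of defining systems is a nice touch the paper leaves implicit.
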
 

If $\DS \bydef \Ds$ is a deductive system, by $\SCR$ we denote the set of all rules admissible in $\DS$, and by $\admDS$ we denote the consequence relation defined by deductive system $\ds{\Ax}{\SCR}$. Let us observe that $\admDS$ is the greatest consequence relation having $\LogL(\DS)$ as its set of theorems.

\subsection{Derivable Rules} Let $\DS \bydef \Ds$ be a deductive system. 

\begin{definition} If $\ruleR \bydef \Gamma/A$ is a rule and $\Gamma \vdash_\DS A$, we say that $\ruleR$ is \textit{derivable} in $\DS$, or that $\ruleR$ is $\DS$\textit{-derivable}. $\Rules(\DS)$ denotes a set of all $\DS$-derivable rules.
\end{definition}

\begin{example} Every admissible in $\CPC$ rule is derivable in $\CPC$, while Harrop rule is admissible and not derivable in $\IPC$. 
\end{example}

It is clear that every derivable in $\DS$ rule is admissible in $\DS$, that is, $\Rules(\DS) \subseteq \SCR$, but not necessarily vice versa, as we see from the above example.

\begin{prop}\label{pr-addrules} Let $\DS \bydef \Ds$ be a deductive system. Then
\begin{equation}
\Ds \DedEqv \ds{\Ax}{\Rules(\DS)}.  \label{eq-addrules}
\end{equation}
\end{prop}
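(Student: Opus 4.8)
The plan is to mirror the proof of Proposition~\ref{pr-addtheorems}, replacing "splice in a derivation of an added theorem" by "splice in a derivation witnessing that an added rule is derivable". Write $\DS \bydef \Ds$ and $\DS' \bydef \ds{\Ax}{\Rules(\DS)}$. Since every rule of $\Rules$ is trivially derivable in $\DS$ (apply the rule to its own premises), we have $\Rules \subseteq \Rules(\DS)$, hence $\vdash_{\DS} \ \subseteq \ \vdash_{\DS'}$; so one half of \eqref{eq-addrules} is immediate and the work is to show $\vdash_{\DS'} \ \subseteq \ \vdash_{\DS}$.

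The key observation is that a substitution instance of a derivable rule is again ``derivable'', in the following sense: if $\ruleR \bydef \Delta/C$ belongs to $\Rules(\DS)$, then $\Delta \vdash_\DS C$ by definition, and since $\vdash_\DS$ is a consequence relation it satisfies structurality (condition~(d)), whence $\sigma(\Delta) \vdash_\DS \sigma(C)$ for every substitution $\sigma$. Concretely, an $\DS$-inference of $C$ from $\Delta$ becomes, after applying $\sigma$ term-by-term, an $\DS$-inference of $\sigma(C)$ from $\sigma(\Delta)$.

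Now let $B_1,\dots,B_n$ be an $\DS'$-inference of $A = B_n$ from a finite set $\Gamma$; I would prove $\Gamma \vdash_\DS B_i$ for every $i = 1,\dots,n$ by induction on $i$ (equivalently, by induction on the length of the inference). In the inductive step: if $B_i \in \Gamma$ or $B_i$ is a substitution instance of an axiom of $\Ax$, then $\Gamma \vdash_\DS B_i$ directly from the definition of $\DS$-inference; if there is a substitution instance $\Delta/B_i$ of a rule $\ruleR \in \Rules(\DS)$ with $\Delta \subseteq \{B_1,\dots,B_{i-1}\}$, then by the induction hypothesis $\Gamma \vdash_\DS D$ for each $D \in \Delta$, by the previous paragraph $\Delta \vdash_\DS B_i$, and so $\Gamma \vdash_\DS B_i$ by transitivity of $\vdash_\DS$ (condition~(c)). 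Taking $i = n$ gives $\Gamma \vdash_\DS A$, so $\vdash_{\DS'} \ \subseteq \ \vdash_\DS$, and combined with the first paragraph this yields \eqref{eq-addrules}. The only case deserving a word of care is a rule with empty premise set, i.e.\ $\Delta = \varnothing$: then $\Delta/B_i \in \Rules(\DS)$ forces $\ \vdash_\DS B_i$, so $B_i \in \LogL(\DS)$ and $\Gamma \vdash_\DS B_i$ still holds. There is no real obstacle here; the argument is a routine induction, the one substantive ingredient being the appeal to structurality of $\vdash_\DS$ to transport an inference along a substitution.
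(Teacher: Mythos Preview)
Your proof is correct and follows exactly the approach the paper indicates: the paper's own proof is the single line ``The proof is similar to the proof of Proposition~\ref{pr-addtheorems}'', and you have carried out precisely that analogy, replacing the spliced-in derivation of an added theorem by the spliced-in derivation witnessing an added rule. Your use of the abstract conditions (c) and (d) on $\vdash_\DS$ in place of explicit sequence manipulation is a cosmetic variation, not a different argument.
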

This proposition simply means that extending the set of rules by the derivable in $\DS$ rules does not change the consequence relation defined by $\DS$.
\begin{proof} The proof is similar to the proof of Proposition \ref{pr-addtheorems}.
\end{proof}

The definition of derivability can be rephrased in terms of consequence relations: given a consequence relation $\vdash$, a rule $\Gamma/A$ is $\vdash$-derivable if $\Gamma \vdash A$. Let us note that the following holds.

\begin{prop} \label{pr-DSderivable} Let $\vdash$ be a consequence relation and $\ruleR$ be a rule. Then the following is equivalent
\begin{itemize}
\item[(a)] $\ruleR$ is $\vdash$-derivable;
\item[(b)] $\ruleR$ is derivable in some deductive system defining $\vdash$;
\item[(c)] $\ruleR$ is derivable in every deductive system defining $\vdash$.
\end{itemize}
\end{prop}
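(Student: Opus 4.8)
The plan is to prove the three-way equivalence in the usual round-robin fashion, exactly mirroring the proof of Proposition \ref{pr-DSadmissible}, and exploiting the fact that derivability, like admissibility, turns out to depend only on the consequence relation and not on the particular deductive system presenting it. The implications $(c)\Rightarrow(b)$ is immediate once we observe that $\vdash$ is definable by \emph{some} deductive system at all --- which was noted right after the definition of consequence relation, where one takes $\DS\bydef\ds{\set{A\in\For}{\vdash A}}{\set{\Gamma/B\in\AllRules}{\Gamma\vdash B}}$ and checks $\vdash_\DS\ =\ \vdash$. So the genuine content lies in $(b)\Rightarrow(a)$ and $(a)\Rightarrow(c)$.

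First I would record the key observation that ties everything together: if $\DS$ is any deductive system defining $\vdash$, then by definition of "$\DS$ defines $\vdash$" we have $\vdash_\DS\ =\ \vdash$, and a rule $\ruleR\bydef\Gamma/A$ is $\DS$-derivable precisely when $\Gamma\vdash_\DS A$, i.e. precisely when $\Gamma\vdash A$. In other words $\Rules(\DS)=\set{\Gamma/A}{\Gamma\vdash A}$ for \emph{every} such $\DS$ --- the set of $\DS$-derivable rules is the same for all deductive systems presenting $\vdash$, and it coincides with the set of $\vdash$-derivable rules as defined just before the proposition. With this identity in hand all three implications collapse to trivialities: $(a)\Rightarrow(c)$ because $\ruleR$ being $\vdash$-derivable means $\Gamma\vdash A$, which means $\Gamma\vdash_\DS A$ for every $\DS$ defining $\vdash$, so $\ruleR\in\Rules(\DS)$ for every such $\DS$; $(c)\Rightarrow(b)$ because at least one such $\DS$ exists; and $(b)\Rightarrow(a)$ because $\ruleR$ derivable in some $\DS$ defining $\vdash$ gives $\Gamma\vdash_\DS A$, hence $\Gamma\vdash A$, i.e. $\ruleR$ is $\vdash$-derivable.

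I expect there is essentially no obstacle here: unlike the admissibility case, where one had to argue via the intrinsic characterization \eqref{ADM} that the notion factors through the logic (a nontrivial step, since admissibility a priori references the whole deductive system), derivability is \emph{defined} directly in terms of $\vdash_\DS$, so the reduction to "$\vdash$ is fixed $\Rightarrow$ derivability is fixed" is immediate. The only point requiring a word of care is to make explicit that "$\DS$ defines $\vdash$" is being used in the sense $\vdash_\DS\ =\ \vdash$, so that $\Gamma\vdash_\DS A\iff\Gamma\vdash A$; once that is spelled out the proof is a single line per implication. Accordingly I would keep the write-up very short, perhaps simply stating that it follows immediately from the definition of derivability and from $\vdash_\DS\ =\ \vdash$ for any $\DS$ defining $\vdash$, in parallel with how Proposition \ref{pr-addrules} was dispatched by reference to Proposition \ref{pr-addtheorems}.
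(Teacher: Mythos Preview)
Your proposal is correct. The paper does not actually give a proof of this proposition --- it simply says ``The proof is easy and it is left for the reader'' --- so your argument is precisely the routine verification the author had in mind, resting on the tautology that $\Gamma\vdash_\DS A$ iff $\Gamma\vdash A$ whenever $\vdash_\DS\ =\ \vdash$.
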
 
The proof is easy and it is left for the reader.

Thus, if $\ruleR$ is a rule and $\DS \bydef \Ds$ is a deductive system, $\ruleR$ is admissible in $\DS$ if and only if $\ds{\Ax}{\Rules + \ruleR} \LogEqv \Ds$, and $\ruleR$ is derivable in $\DS$ if and only if $\ds{\Ax}{\Rules + \ruleR} \DedEqv \Ds$.

\subsection{Base of Admissible Rules}

One of the common ways of defining the set of all rules admissible in a given deductive system $\DS \bydef \Ds$ is to present a base, that is, a set of admissible rules from which every admissible in $\DS$ rule can be derived. 

\begin{definition} Suppose $\DS \bydef \Ds$ is a deductive system. A set of rules $\Rules'$ is a \textit{relative to} $\DS$ \textit{base of admissible rules} if $\ds{\Ax}{\Rules + \Rules'} \DedEqv \ds{\Ax}{\SCR}$. And $\Rules'$ is a \textit{base of admissible in }$\DS$ \textit{rules} if $\ds{\LogL(\DS)}{\Rules'} \DedEqv \ds{\LogL(\DS)}{\SCR}$. 
\end{definition}

The following simple proposition shows the relations between relative bases and bases.

\begin{prop}\label{pr-relbasetobase}
Let $\DS \bydef \Ds$ be a deductive system  and $\Rules'$ be a relative to $\DS$ base of admissible rules. Then $\Rules + \Rules'$ forms a base of admissible in $\DS$ rules. 
\end{prop}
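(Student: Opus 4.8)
The aim is to verify, straight from the definition of a base, that
\[
\ds{\LogL(\DS)}{\Rules + \Rules'} \DedEqv \ds{\LogL(\DS)}{\SCR}.
\]
The plan is to start from the hypothesis $\ds{\Ax}{\Rules + \Rules'} \DedEqv \ds{\Ax}{\SCR}$ and, on each side of this deductive equality, replace the axiom set $\Ax$ by the larger set $\LogL(\DS)$. This replacement is licensed by Proposition~\ref{pr-addtheorems}, provided we first know that each of the two deductive systems $\ds{\Ax}{\Rules + \Rules'}$ and $\ds{\Ax}{\SCR}$ has exactly $\LogL(\DS)$ as its logic.

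So first I would record the two facts about logics. For $\ds{\Ax}{\SCR}$ this is precisely \eqref{eq-admnotext}: $\DS \LogEqv \ds{\Ax}{\SCR}$, i.e. $\LogL(\ds{\Ax}{\SCR}) = \LogL(\DS)$. For $\ds{\Ax}{\Rules + \Rules'}$, note that the hypothesis gives $\ds{\Ax}{\Rules + \Rules'} \DedEqv \ds{\Ax}{\SCR}$, and deductive equality implies logical equality, so $\LogL(\ds{\Ax}{\Rules + \Rules'}) = \LogL(\ds{\Ax}{\SCR}) = \LogL(\DS)$.

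Now I would apply Proposition~\ref{pr-addtheorems} twice: to the deductive system $\ds{\Ax}{\Rules + \Rules'}$ it yields $\ds{\Ax}{\Rules + \Rules'} \DedEqv \ds{\LogL(\DS)}{\Rules + \Rules'}$, and to $\ds{\Ax}{\SCR}$ it yields $\ds{\Ax}{\SCR} \DedEqv \ds{\LogL(\DS)}{\SCR}$. Chaining these two with the hypothesis and using symmetry and transitivity of $\DedEqv$ gives
\[
\ds{\LogL(\DS)}{\Rules + \Rules'} \DedEqv \ds{\Ax}{\Rules + \Rules'} \DedEqv \ds{\Ax}{\SCR} \DedEqv \ds{\LogL(\DS)}{\SCR},
\]
which is exactly the statement that $\Rules + \Rules'$ is a base of admissible in $\DS$ rules.

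As for the main obstacle: there is essentially none beyond bookkeeping, and the proof is very short. The only point that must not be skipped is the verification that the logics of all the systems in play coincide with $\LogL(\DS)$; without it, Proposition~\ref{pr-addtheorems} would attach the "wrong" theorem set on one of the two sides and the chain of equalities would not close. Transitivity and symmetry of $\DedEqv$, being just equality of the associated consequence relations, are immediate.
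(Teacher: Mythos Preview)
Your proof is correct and follows essentially the same route as the paper's: both apply Proposition~\ref{pr-addtheorems} (i.e.\ \eqref{eq-addtheorems}) to each side of the hypothesis $\ds{\Ax}{\Rules+\Rules'} \DedEqv \ds{\Ax}{\SCR}$, after noting (via ``deductive equality yields logical equality'' together with \eqref{eq-admnotext}) that the logics of these two systems coincide with $\LogL(\DS)$. Your write-up is simply more explicit than the paper's about why this last point is needed before \eqref{eq-addtheorems} can be invoked.
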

\begin{proof} Indeed, by \eqref{eq-addtheorems}, $\ds{\Ax}{\Rules + \Rules'} \DedEqv \ds{\LogL(\DS)}{\Rules + \Rules'}$. On the other hand, by assumption and by \eqref{eq-addtheorems}, keeping in mind that deductive equality yields logical equality, we have $\ds{\Ax}{\Rules + \Rules'} \DedEqv \ds{\Ax}{\SCR} \DedEqv \ds{\LogL(\DS)}{\SCR}$.
\end{proof}

\begin{example} As it had been observed in \cite{Iemhoff_Admissible_2001}, the rules $V_n, n=1,2,\dots$ form relative to $\IPC$ base for admissible (in $\IPC$) rules:
\begin{equation}
V_n \bydef r \lor \bigwedge_{i=1}^n(p_i \to q_i) \to (p_{n +1} \lor p_{n+2})/ r \lor \bigvee_{j=1}^{n+2}(\bigwedge_{i=1}^n(p_i \to q_i) \to p_j).  
\end{equation}
Rules $V_n$ are knows as the Visser's rules. The set of the Visser's rules together with Modus Ponens forms also a base of admissible in $\IPC$ rules.
\end{example}

Naturally, any two logically (and, therefore, any two deductively) equal systems share the base of admissible rules. In other words, the base of admissible rules depends only on logic and does not depend on a particular deductive system defining this logic.

\begin{prop}\label{pr-logadmbase} Let $\DS_0, \DS_1$ be logically equal deductive systems. Then a set of rules $\Rules$ is a base of admissible in $\DS_0$ rules if and only if $\Rules$ is a base of admissible in $\DS_1$ rules
\end{prop}
The proof is trivial: by assumption, $\DS_0 \LogEqv \DS_1$ , that is, $\LogL(\DS_0) = \LogL(\DS_1)$.

\section{Structural Completeness} \label{sec-SCpl}

In this section we recall the notions of structural and hereditary structural completeness, and we prove the main theorem (Theorem \ref{th-HSCplB}) that establishes a link between hereditary structural completeness and inheritance of a base of admissible rules.

\subsection{Structural Completeness: Definition}\label{SCpl} The notion of structural completeness was introduced in \cite{Pogorzelski_Structural_1971} and is central for our research. 

\begin{definition} \cite{Pogorzelski_Structural_1971}\label{def-SCpl} A deductive system $\DS$ is said to be \textit{structurally complete} if $\Rules(\DS) = \SC{\Rules}(\DS)$, i.e. if every admissible in $\DS$ rule is derivable in $\DS$. 
\end{definition}

\begin{example} $\CPC$ is structurally complete, while $\IPC$ is not.
\end{example}

Immediately from Propositions \ref{pr-DSadmissible} and \ref{pr-DSderivable} we obtain the following.

\begin{prop} \label{pr-DSSCpl} Let $\vdash$ be a consequence relation. Then the following is equivalent

\begin{tabular}{rl}
(a) &$\vdash$ is structurally complete;\\
(b) &Some deductive system defining $\vdash$ is structurally complete; \\
(c) &Every deductive system defining $\vdash$ is structurally complete.
\end{tabular}
\end{prop}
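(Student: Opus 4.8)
The plan is to reduce everything to the two facts already available in the excerpt: Proposition~\ref{pr-DSadmissible} (admissibility of a rule is a property of the consequence relation, hence holds in one defining deductive system iff it holds in all of them) and Proposition~\ref{pr-DSderivable} (the same for derivability). Since structural completeness of $\DS$ is, by Definition~\ref{def-SCpl}, the assertion $\Rules(\DS) = \SC{\Rules}(\DS)$, and each side of that equality is, by the cited propositions, determined by $\vdash_\DS$ alone, the equivalence of (a), (b), (c) should fall out almost immediately. The only genuine task is to spell out what ``$\vdash$ is structurally complete'' means for a bare consequence relation and check the chain of implications (a)$\Rightarrow$(c)$\Rightarrow$(b)$\Rightarrow$(a).

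First I would fix the reading of clause (a): a consequence relation $\vdash$ is structurally complete if every rule admissible for $\vdash$ (in the sense of the earlier intrinsic characterization) is $\vdash$-derivable, i.e. if $\Gamma \vdash A$ whenever $\Gamma/A$ satisfies \eqref{ADM} with respect to $\LogL$ determined by $\vdash$. Next, for (a)$\Rightarrow$(c): let $\DS$ be any deductive system with $\vdash_\DS\ =\ \vdash$, and let $\ruleR$ be admissible in $\DS$. By Proposition~\ref{pr-DSadmissible}, $\ruleR$ is admissible for $\vdash$; by (a) it is $\vdash$-derivable; by Proposition~\ref{pr-DSderivable}, $\ruleR$ is derivable in $\DS$. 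Hence $\SC{\Rules}(\DS) \subseteq \Rules(\DS)$, and since the reverse inclusion is automatic, $\DS$ is structurally complete. The implication (c)$\Rightarrow$(b) is trivial because every consequence relation is defined by at least one deductive system (as noted in the Consequence Relations subsection, one may take $\ds{\set{A}{\vdash A}}{\set{\Gamma/B}{\Gamma \vdash B}}$), so ``every'' is not vacuously satisfied.

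Finally (b)$\Rightarrow$(a): suppose $\DS$ defines $\vdash$ and is structurally complete, and let $\ruleR$ be admissible for $\vdash$. By Proposition~\ref{pr-DSadmissible}, $\ruleR$ is admissible in $\DS$; by structural completeness of $\DS$, $\ruleR$ is derivable in $\DS$; by Proposition~\ref{pr-DSderivable}, $\ruleR$ is $\vdash$-derivable. So $\vdash$ is structurally complete, closing the cycle.

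There is no real obstacle here; the proposition is a bookkeeping corollary of the two preceding propositions, exactly as the excerpt already advertises (``Immediately from Propositions \ref{pr-DSadmissible} and \ref{pr-DSderivable} we obtain the following''). The only point worth a sentence of care is that one should not conflate ``admissible/derivable in a deductive system'' with ``admissible/derivable for a consequence relation'' without invoking the transfer propositions; once those are cited the argument is a three-line diagram chase, so I would present it as such rather than expanding it.
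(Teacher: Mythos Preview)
Your proposal is correct and is exactly the argument the paper intends: the paper gives no proof beyond the sentence ``Immediately from Propositions \ref{pr-DSadmissible} and \ref{pr-DSderivable} we obtain the following,'' and your cycle (a)$\Rightarrow$(c)$\Rightarrow$(b)$\Rightarrow$(a) simply unpacks that remark. Your care about (c)$\Rightarrow$(b) not being vacuous is appropriate but, as you note, already handled by the explicit construction of a defining deductive system in the Consequence Relations subsection.
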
 

The next Proposition gives an alternative intrinsic definition of structural completeness, and often it is used as a definition (see e.g. \cite{Bergman_Structural_1991,Olson_Raftery_Alten_2008,Cintula_Metcalfe_2009}). 

\begin{prop} \label{pr-SCplCR} A consequence relation $\vdash$ is \textit{structurally complete} if and only if every its proper extension $\vdash'$ contains the new theorems, i.e.
\begin{equation}
\vdash \ < \ \vdash' \text{ entails } Th(\vdash) \subset Th(\vdash').  \label{eqpr-SCplCR} \tag{SC}
\end{equation}
\end{prop}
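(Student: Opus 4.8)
The plan is to route both implications through a single construction. Given a rule $\ruleR \bydef \Gamma/A$, let $\vdash_{\ruleR}$ be the least consequence relation that extends $\vdash$ and to which the pair $\langle\Gamma,A\rangle$ belongs; this exists because, as noted above, the extensions of $\vdash$ are closed under arbitrary meets, so $\vdash_{\ruleR}$ is the intersection of all extensions of $\vdash$ containing $\langle\Gamma,A\rangle$, and that intersection is again such an extension. Fixing any deductive system $\ds{\Ax}{\Rules}$ defining $\vdash$, a routine induction on the length of an inference shows that $\vdash_{\ruleR}$ is exactly the consequence relation defined by $\ds{\Ax}{\Rules+\ruleR}$: the latter extends $\vdash$, contains $\ruleR$, and, being a consequence relation, is closed under substitution, hence contains $\vdash_{\ruleR}$; conversely any consequence relation extending $\vdash$ and containing $\langle\Gamma,A\rangle$ is closed under every move permitted in an $\ds{\Ax}{\Rules+\ruleR}$-inference. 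With this identification, $\ruleR$ is admissible for $\vdash$ iff $\Th(\vdash_{\ruleR})=\Th(\vdash)$, and $\ruleR$ is derivable for $\vdash$ iff $\langle\Gamma,A\rangle\in\ \vdash$, i.e. iff $\vdash_{\ruleR}\ =\ \vdash$ (one direction because then $\vdash$ is itself an extension of $\vdash$ containing $\ruleR$, the other because $\langle\Gamma,A\rangle$ always lies in $\vdash_{\ruleR}$).

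For ($\Rightarrow$), assume $\vdash$ is structurally complete and let $\vdash'$ be a proper extension of $\vdash$. Since $\vdash\ \subseteq\ \vdash'$ we have $\Th(\vdash)\subseteq\Th(\vdash')$, and I claim this inclusion is strict. If not, pick $\langle\Gamma,A\rangle\in\ \vdash'\setminus\vdash$ and put $\ruleR\bydef\Gamma/A$. Then $\vdash\ \subseteq\ \vdash_{\ruleR}\ \subseteq\ \vdash'$, the second inclusion because $\vdash'$ extends $\vdash$ and contains $\langle\Gamma,A\rangle$; hence $\Th(\vdash)\subseteq\Th(\vdash_{\ruleR})\subseteq\Th(\vdash')=\Th(\vdash)$, so $\Th(\vdash_{\ruleR})=\Th(\vdash)$ and $\ruleR$ is admissible for $\vdash$. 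Structural completeness then makes $\ruleR$ derivable, i.e. $\langle\Gamma,A\rangle\in\ \vdash$, contradicting the choice of the pair. Therefore $\Th(\vdash)\subsetneq\Th(\vdash')$, which is \eqref{eqpr-SCplCR}.

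For ($\Leftarrow$), assume \eqref{eqpr-SCplCR} and let $\ruleR\bydef\Gamma/A$ be admissible for $\vdash$; I must show $\Gamma\vdash A$. By admissibility, $\Th(\vdash_{\ruleR})=\Th(\vdash)$, while $\vdash\ \subseteq\ \vdash_{\ruleR}$ always. If $\vdash_{\ruleR}$ were a proper extension of $\vdash$, then \eqref{eqpr-SCplCR} would yield a theorem of $\vdash_{\ruleR}$ outside $\Th(\vdash)$, contradicting $\Th(\vdash_{\ruleR})=\Th(\vdash)$. Hence $\vdash_{\ruleR}\ =\ \vdash$, so $\langle\Gamma,A\rangle\in\ \vdash$, i.e. $\Gamma\vdash A$; thus every admissible rule is derivable and $\vdash$ is structurally complete.

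The substantive content is concentrated in the first paragraph: the one thing needing care is the verification that the externally defined $\vdash_{\ruleR}$ coincides with the consequence relation of $\ds{\Ax}{\Rules+\ruleR}$, so that the deductive-system definitions of ``admissible'' and ``derivable'' translate into the displayed conditions on $\vdash_{\ruleR}$; once that is in hand, both directions are immediate. (Alternatively, in ($\Rightarrow$) one may sidestep $\vdash_{\ruleR}$ and check admissibility of $\ruleR$ directly from the intrinsic criterion \eqref{ADM}, using the substitution-invariance and cut conditions on consequence relations; and the case $\Gamma=\varnothing$ requires no separate treatment in either route.)
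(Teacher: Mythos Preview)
Your proof is correct and follows essentially the same strategy as the paper's: both directions hinge on extending $\vdash$ by a single rule and comparing theorems. The only difference is packaging---you introduce $\vdash_{\ruleR}$ as a meet in the lattice of extensions and then identify it with the consequence relation of $\ds{\Ax}{\Rules+\ruleR}$, whereas the paper works directly with the deductive system $\ds{Th(\vdash)}{\Rules+\ruleR}$; your forward direction is also spelled out more explicitly (the paper's is a one-line sketch that implicitly uses the admissibility criterion \eqref{ADM}, which you mention as an alternative in your final parenthetical).
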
 
\begin{proof}
Suppose that $\vdash$ is a structurally complete consequence relation and let $\vdash'$ be a consequence relation and $\vdash \ < \ \vdash'$. Due to $\vdash$ is structurally complete, every admissible, that is every preserving $Th(\vdash)$, rule is $\vdash$-derivable. Hence, $\vdash$ cannot have the same set of theorems as $\vdash$. Thus, \eqref{SCpl} holds.

Conversely, assume that \ref{SCpl} holds and $\Rules$ is a set of all derivable in $\vdash$ rules. Then $\vdash \ = \ \vdash_\DS$, where $\DS \bydef \ds{Th(\vdash)}{\Rules}$. If $\DS$ is not structurally complete, there would be an admissible in $\DS$ but not $\DS$-derivable rule $\ruleR$. Consider deductive system $\DS^\ruleR \bydef \ds{Th(\vdash)}{\Rules + \ruleR}$. Due to $\ruleR$ is admissible in $\DS$, we have $Th(\DS) = Th(\DS^\ruleR)$. Due to $\ruleR$ is not $\DS$-derivable, we have $\vdash_\DS \ < \ \vdash_{\DS^\ruleR}$.
\end{proof}

\subsection{Structural Completions}

It is worth noting that every deductive system $\DS \bydef \Ds$ can be extended to a logically equal structurally complete deductive system. Indeed, denote by $\SC{\Rules}(\DS)$ the set of all rules admissible in $\DS$, and take deductive system $\SCDS \bydef \ds{\Ax}{\SCR}$. It is clear that $\DS \LogEqv \SCDS$. 

\begin{definition} 
Deductive system $\SCDS \bydef \ds{\Ax}{\SCR}$ is called a \textit{structural completion} of $\DS$.
\end{definition}

Let us observe that  $\SCDS$ is the greatest relative to $\DedExt$ system among deductive systems logically equal to $\DS$. More precisely, the following holds.
 
\begin{prop} \label{pr_CompGr}(comp. \cite[Proposition 1.2]{Bergman_Structural_1991}) Let $\DS$ and $\DS'$ be deductive systems. Then
\begin{equation}
\DS \LogEqv \DS' \text{ entails }\DS' \DedExt \SCDS. \label{eqpr_CompGr}
\end{equation}
\end{prop}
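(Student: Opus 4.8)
The plan is to unwind the definitions of $\LogEqv$ and $\DedExt$ and then use the intrinsic characterization of admissibility. Suppose $\DS \LogEqv \DS'$, i.e.\ $\LogL(\DS) = \LogL(\DS')$; write $\DS' \bydef \ds{\Ax'}{\Rules'}$ and $\SCDS \bydef \ds{\Ax}{\SCR}$. We must show $\vdash_{\SCDS} \ \subseteq \ \vdash_{\DS'}$, that is, every rule $\Gamma/A$ with $\Gamma \vdash_{\SCDS} A$ also satisfies $\Gamma \vdash_{\DS'} A$. The key observation is that the axioms of $\SCDS$ are exactly the theorems $\LogL(\DS) = \LogL(\DS')$, hence (being theorems of $\DS'$) they are $\DS'$-derivable, and the rules of $\SCDS$ are the rules admissible in $\DS$, which by Proposition~\ref{pr-admlogequal} coincide with the rules admissible in $\DS'$.

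First I would reduce to the case of a single inference step. Take an $\SCDS$-inference $B_1,\dots,B_n$ of $A$ from $\Gamma$ and argue by induction on $n$ that $\Gamma \vdash_{\DS'} B_i$ for every $i$. If $B_i \in \Gamma$ this is immediate from the consequence-relation axioms (a) and (b) applied to $\vdash_{\DS'}$. If $B_i$ is a substitution instance $\sigma(C)$ of an axiom $C \in \Ax$ of $\SCDS$, then $C \in \LogL(\DS) = \LogL(\DS')$, so $\vdash_{\DS'} C$; by structurality (clause (d)) $\vdash_{\DS'} \sigma(C)$, hence $\Gamma \vdash_{\DS'} B_i$ by monotonicity. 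Finally, if $B_i$ is obtained by a substitution instance $\Delta/B_i$ of a rule $\ruleR \in \SCR$ with $\Delta \subseteq \{B_1,\dots,B_{i-1}\}$, then by the induction hypothesis $\Gamma \vdash_{\DS'} D$ for every $D \in \Delta$; since $\ruleR$ is admissible in $\DS$ it is, by Proposition~\ref{pr-admlogequal}, admissible in $\DS'$, and hence (as admissibility is preserved under adding admissible rules and does not change the logic) $\vdash_{\DS'}$ is unchanged by $\ruleR$ — but to push $\Delta \vdash_{\DS'} B_i$ through we need derivability of $\ruleR$ in a suitable system, not mere admissibility.

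This is the subtle point, and it is where the right formulation matters: rather than invoke admissibility of $\ruleR$ directly, I would route through $\admDS$. Recall $\admDS$ is the consequence relation of $\ds{\Ax}{\SCR}$, so $\vdash_{\SCDS} \ = \ \admDS$; the remark after Proposition~\ref{pr-DSadmissible} notes that $\admDS$ is the \emph{greatest} consequence relation whose set of theorems is $\LogL(\DS)$. Since $\vdash_{\DS'}$ has $\LogL(\DS') = \LogL(\DS)$ as its set of theorems, it follows at once that $\vdash_{\DS'} \ \subseteq \ \admDS \ = \ \vdash_{\SCDS}$ — wait, that is the wrong inclusion. So instead I use: $\vdash_{\SCDS} \ = \ \admDS$ is generated (as the smallest consequence relation containing the relevant pairs) by all rules admissible in $\DS$ together with all theorems of $\DS$ as $\varnothing$-premise rules; hence to show $\vdash_{\SCDS} \ \subseteq \ \vdash_{\DS'}$ it suffices to show each such generating pair lies in $\vdash_{\DS'}$. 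For theorems this was done above; for an admissible rule $\ruleR = \Gamma_0/A_0$ of $\DS$, admissibility in $\DS'$ (Proposition~\ref{pr-admlogequal}) is still not derivability. The clean resolution: $\vdash_{\SCDS}$ is by definition the consequence relation of $\ds{\Ax}{\SCR}$, and one proves directly by the induction above that it is contained in \emph{any} consequence relation $\vdash'$ that (i) proves all formulas in $\LogL(\DS)$ and (ii) derives all rules in $\SCR$; but condition (ii) fails for general $\DS'$. Therefore the honest argument must be: every rule in $\SCR$ is admissible in $\DS'$, and the logic of $\ds{\Ax'}{\Rules' + \SCR}$ equals $\LogL(\DS')$, while $\vdash_{\SCDS} \ = \ \vdash_{\ds{\Ax}{\SCR}} \ \subseteq \ \vdash_{\ds{\Ax'}{\Rules' + \SCR}}$ because $\Ax \subseteq \LogL(\DS) = \LogL(\DS')$ are derivable in $\DS'$ and the rules agree; and finally $\vdash_{\ds{\Ax'}{\Rules' + \SCR}} \ \subseteq \ \admDS \ = \ \vdash_{\SCDS}$ by maximality of $\admDS$. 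Combining the two inclusions gives $\vdash_{\ds{\Ax'}{\Rules' + \SCR}} \ = \ \vdash_{\SCDS}$, and since $\vdash_{\DS'} \ \subseteq \ \vdash_{\ds{\Ax'}{\Rules' + \SCR}}$ trivially, we conclude $\vdash_{\DS'} \ \subseteq \ \vdash_{\SCDS}$, i.e.\ $\DS' \DedExt \SCDS$.

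The main obstacle, then, is exactly the conceptual slip just traced: one must resist the temptation to treat admissible rules as derivable in $\DS'$, and instead exploit that $\admDS = \vdash_{\SCDS}$ is the largest consequence relation with the prescribed theorems (the remark following Proposition~\ref{pr-DSadmissible}). Once that maximality is in hand, the proof is a two-line sandwich: $\vdash_{\DS'} \ \subseteq \ \vdash_{\ds{\Ax'}{\Rules' + \SCR}} \ \subseteq \ \admDS \ = \ \vdash_{\SCDS}$, where the middle inclusion holds because adjoining rules admissible in $\DS'$ does not enlarge the theorems beyond $\LogL(\DS') = \LogL(\DS)$, and $\admDS$ is maximal among such. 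No nontrivial calculation is required; the content is entirely in choosing this route rather than the naive induction.
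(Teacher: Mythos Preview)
Your proposal eventually lands on a correct argument, but the bulk of it is a detour caused by misreading the notation $\DedExt$. In the paper, $\DS_1 \DedExt \DS_2$ means $\vdash_{\DS_1} \subseteq \vdash_{\DS_2}$, so $\DS' \DedExt \SCDS$ asks for $\vdash_{\DS'} \subseteq \vdash_{\SCDS}$, \emph{not} the reverse. Your opening sentence (``We must show $\vdash_{\SCDS} \subseteq \vdash_{\DS'}$'') is the wrong direction, and the entire second paragraph---the induction on inference length and the worry that admissible rules of $\DS$ need not be derivable in $\DS'$---is an attempt to prove a statement that is in fact false in general. That is not a subtle point in the argument; it is simply the wrong target.

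The irony is that in your third paragraph you write down the correct one-line proof and then discard it: from the remark after Proposition~\ref{pr-DSadmissible}, $\vdash_{\SCDS}=\admDS$ is the greatest consequence relation with theorem set $\LogL(\DS)$; since $\LogL(\DS')=\LogL(\DS)$, we get $\vdash_{\DS'}\subseteq\vdash_{\SCDS}$ immediately. That \emph{is} $\DS'\DedExt\SCDS$, and you are done. Your subsequent sandwich $\vdash_{\DS'}\subseteq\vdash_{\ds{\Ax'}{\Rules'+\SCR}}\subseteq\admDS$ is correct but redundant: the intermediate system is unnecessary once maximality is invoked.

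The paper's proof is the same idea phrased without appealing to maximality explicitly: it observes $\DS'\DedExt\ds{\Ax'}{\SC{\Rules}(\DS')}\DedEqv\SC{\DS'}$ (adding admissible rules enlarges $\vdash$ but not the logic), and then uses $\LogL(\DS)=\LogL(\DS')$ together with Proposition~\ref{pr-admlogequal} to conclude $\SC{\DS'}\DedEqv\SCDS$. Both routes are two lines once the direction is right; yours via maximality is arguably the cleaner formulation, but you should strip out the false start and the self-correction.
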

\begin{proof} Let $\DS \bydef \Ds$ and $\DS' \bydef \ds{\Ax'}{\Rules'}$. Then, due to every derivable rule is admissible and by \eqref{eq-addtheorems}, we have
\[
\DS' = \ds{\Ax'}{\Rules'} \DedExt \ds{\Ax'}{\SC{\Rules}(\DS')}  \DedEqv \ds{\LogL(\DS')}{\SC{\Rules}(\DS')} \DedEqv \SC{\DS'}. 
\]
Let us observe, that $\DS \LogEqv \DS'$ yields $\LogL(\DS) = \LogL(\DS')$, and that, by \eqref{eqpr-admlogequal}, $\SC{\Rules}(\DS) = \SC{\Rules}(\DS')$. Thus,
\[
\SC{\DS'} \DedEqv \ds{\LogL(\DS')}{\SC{\Rules}(\DS')} \DedEqv \ds{\LogL(\DS)}{\SC{\Rules}(\DS)} \DedEqv \SC{\DS}.
\]
\end{proof}

\subsection{Hereditary Structural Completeness} 

\begin{definition} A structurally complete deductive system $\DS$ is \textit{hereditarily structurally} complete if every its deductive extension is structurally complete (comp. \cite[Section 5.4]{Rybakov_Book}).
\end{definition}

Immediately from the definition it follows that any deductive extension of a hereditarily structurally complete deductive system is hereditarily structurally complete. At the same time, there are structurally complete deductive systems which are not hereditarily structurally complete (see e.g. \cite{Citkin_1978}).

The following theorem gives some alternative views at hereditary structural completeness.

\begin{theorem}\label{th-HScpl}\cite[Theorem 2.6]{Olson_Raftery_Alten_2008} Let $\DS \bydef \Ds$ be a structural complete deductive system Then the following is equivalent:

\begin{tabular}{rl}
(a) & $\DS$ is hereditarily structural complete;\\
(b) &  Every axiomatic extension of $\DS$ is structural complete;\\
(c) & Every deductive extension of $\DS$ is axiomatic.
\end{tabular}
\end{theorem}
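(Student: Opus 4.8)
The plan is to prove the cyclic chain of implications $(a) \Rightarrow (b) \Rightarrow (c) \Rightarrow (a)$, using throughout the fact (Proposition~\ref{pr-DSSCpl}) that structural completeness is a property of the consequence relation, together with Proposition~\ref{pr_CompGr}, which says that among all deductive systems logically equal to a fixed system $\DS$ the structural completion $\SCDS$ is the $\DedExt$-largest. Note first that a structurally complete system $\DS$ satisfies $\DS \DedEqv \SC{\DS}$, so we may as well think of $\DS$ itself as a structural completion.

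The implication $(a) \Rightarrow (b)$ is immediate: an axiomatic extension of $\DS$ is in particular a deductive extension, so if every deductive extension is structurally complete then so is every axiomatic one. For $(b) \Rightarrow (c)$, let $\DS'$ be any deductive extension of $\DS$, say $\DS' \bydef \ds{\Ax'}{\Rules'}$ with $\vdash_\DS \ \subseteq \ \vdash_{\DS'}$. Put $\Gamma \bydef \LogL(\DS')$ and let $\DS'' \bydef \ds{\Ax + \Gamma}{\Rules}$ be the axiomatic extension of $\DS$ obtained by adding all the new theorems. Then $\DS'' \DedExt \DS'$ (every axiom of $\DS''$ is a theorem of $\DS'$, and $\DS'$ extends $\DS$), and $\LogL(\DS'') = \Gamma = \LogL(\DS')$, so $\DS'' \LogEqv \DS'$. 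By hypothesis $(b)$, $\DS''$ is structurally complete, hence $\DS'' \DedEqv \SC{\DS''}$; but by Proposition~\ref{pr_CompGr} applied to the logically equal pair $\DS'$ and $\DS''$ we get $\DS' \DedExt \SC{\DS''} \DedEqv \DS''$. Combined with $\DS'' \DedExt \DS'$ this yields $\DS' \DedEqv \DS''$, so $\DS'$ is (deductively equal to) the axiomatic extension $\DS''$ of $\DS$; that is, $\DS'$ is axiomatic over $\DS$.

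For $(c) \Rightarrow (a)$, suppose every deductive extension of $\DS$ is axiomatic, and let $\DS'$ be an arbitrary deductive extension of $\DS$; we must show $\DS'$ is structurally complete, i.e.\ (by Proposition~\ref{pr-DSSCpl} and Definition~\ref{def-SCpl}) that $\DS' \DedEqv \SC{\DS'}$. By $(c)$, $\DS' \DedEqv \ds{\Ax + \Gamma}{\Rules}$ for some $\Gamma \subseteq \For$. Now $\SC{\DS'}$ is also a deductive extension of $\DS$ (since $\DS \DedExt \DS' \DedExt \SC{\DS'}$), so by $(c)$ again $\SC{\DS'} \DedEqv \ds{\Ax + \Delta}{\Rules}$ for some $\Delta \subseteq \For$, with $\Delta$ containing $\Gamma$ up to derivability. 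But $\DS' \LogEqv \SC{\DS'}$ forces $\LogL(\ds{\Ax+\Gamma}{\Rules}) = \LogL(\ds{\Ax+\Delta}{\Rules})$, i.e.\ $\Gamma$ and $\Delta$ generate the same theorems over $\DS$; hence $\Gamma \vdash_{\DS'} \delta$ for every $\delta \in \Delta$, and by Proposition~\ref{pr-addtheorems} the two axiomatic systems are deductively equal, so $\DS' \DedEqv \SC{\DS'}$. Thus $\DS'$ is structurally complete, and since $\DS'$ was an arbitrary deductive extension, $\DS$ is hereditarily structurally complete.

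The main obstacle is the step $(c) \Rightarrow (a)$, specifically the passage from ``$\DS'$ and $\SC{\DS'}$ are both axiomatic extensions with the same logic'' to ``they are deductively equal.'' The subtlety is that two axiomatic extensions $\ds{\Ax+\Gamma}{\Rules}$ and $\ds{\Ax+\Delta}{\Rules}$ having the same set of theorems need not be defined by literally the same extra axioms, but they are deductively equal precisely because each new axiom of one is a theorem—hence (by Proposition~\ref{pr-addtheorems}) a harmless addition—of the other; one has to check that adding axiomatic postulates (as opposed to genuine rules) can never enlarge the consequence relation beyond what the logic already dictates, which is exactly the content of~\eqref{eq-addtheorems}. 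I would present this carefully, as it is the place where the equivalence of ``axiomatic'' with ``theorem-controlled'' is really used.
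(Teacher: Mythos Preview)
Your argument is correct. The cycle $(a)\Rightarrow(b)\Rightarrow(c)\Rightarrow(a)$ goes through as you describe; in particular, the crucial observation in $(c)\Rightarrow(a)$ --- that two axiomatic extensions $\ds{\Ax+\Gamma}{\Rules}$ and $\ds{\Ax+\Delta}{\Rules}$ with the same set of theorems $L$ are deductively equal --- follows cleanly from Proposition~\ref{pr-addtheorems}, since both are $\DedEqv \ds{L}{\Rules}$. The step $(b)\Rightarrow(c)$ is also handled correctly: forming $\DS''=\ds{\Ax+\LogL(\DS')}{\Rules}$ and then invoking Proposition~\ref{pr_CompGr} to squeeze $\DS'$ between $\DS''$ and $\SC{\DS''}\DedEqv\DS''$ is exactly the right move.

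As for comparison with the paper: there is nothing to compare. The paper does not give its own proof of this theorem; it simply quotes the result from \cite[Theorem~2.6]{Olson_Raftery_Alten_2008} and uses it as a black box. Your write-up therefore supplies something the paper omits. One minor presentational remark: the phrase ``with $\Delta$ containing $\Gamma$ up to derivability'' is not needed and slightly muddies the water --- the argument only uses that the two axiomatic extensions have the same logic, not any containment between $\Gamma$ and $\Delta$.
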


Let us note that if a deductive system $\DS$ is hereditarily structural complete, then $\DS$ is structural completion of a deductive system. Thus, it is natural to ask for a given deductive system $\DS$ whether its structural completion $\SCDS$ is hereditarily complete. The next theorem gives some necessary and sufficient conditions of hereditarily structural completeness of the structural completion of a deductive system. 

\begin{theorem} \label{th-HSCplB}Let $\DS \bydef \Ds$ be a deductive system and $\Rules_b$ be a relative base of admissible in $\DS$ rules. Then $\SCDS$ is hereditarily structurally complete if and only if $\Rules_b$ forms a relative basis of admissible rules in every deductive extension of $\DS$ in which rules $\Rules_b$ are admissible.  
\end{theorem}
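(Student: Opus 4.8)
The plan is to deduce both implications from the bookkeeping results of Section~\ref{sec-SCpl} together with the definition of hereditary structural completeness. Write $\DS = \Ds$ and, for any deductive extension $\DS' = \ds{\Ax'}{\Rules'}$ of $\DS$, put $\DS'_b := \ds{\Ax'}{\Rules' + \Rules_b}$. The one substantive observation, which drives both directions, is this: \emph{if $\DS \DedExt \DS'$ and the rules $\Rules_b$ are admissible in $\DS'$, then $\DS'_b$ is a deductive extension of $\SCDS$ and $\DS'_b \LogEqv \DS'$.} The logical equality is immediate, since adjoining admissible rules does not change the logic. For the extension part one checks $\vdash_{\ds{\Ax}{\Rules + \Rules_b}} \ \subseteq \ \vdash_{\DS'_b}$ directly — the axioms $\Ax$ are $\DS'$-theorems because $\LogL(\DS) \subseteq \LogL(\DS')$, the rules $\Rules$ are $\DS'$-derivable (hence $\DS'_b$-derivable), and $\Rules_b$ is visibly $\DS'_b$-derivable — and then invokes that $\Rules_b$ is a relative base of admissible rules in $\DS$, i.e. $\ds{\Ax}{\Rules + \Rules_b} \DedEqv \SCDS$, to get $\vdash_{\SCDS} \ \subseteq \ \vdash_{\DS'_b}$.

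For the forward direction, assume $\SCDS$ is hereditarily structurally complete and let $\DS'$ be a deductive extension of $\DS$ in which $\Rules_b$ is admissible. By the observation $\DS'_b$ is a deductive extension of $\SCDS$, so it is structurally complete, and $\Rules(\DS'_b) = \SC{\Rules}(\DS'_b) = \SC{\Rules}(\DS')$, the last equality by Proposition~\ref{pr-admlogequal} since $\DS'_b \LogEqv \DS'$. Then Proposition~\ref{pr-addrules} yields $\DS'_b \DedEqv \ds{\Ax'}{\SC{\Rules}(\DS')} = \SC{\DS'}$, which is precisely the assertion that $\Rules_b$ is a relative base of admissible rules in $\DS'$.

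For the converse, assume $\Rules_b$ is a relative base of admissible rules in every deductive extension of $\DS$ in which $\Rules_b$ is admissible; it suffices to show that every deductive extension $\DS^* = \ds{\Ax^*}{\Rules^*}$ of $\SCDS$ is structurally complete (the case $\DS^* = \SCDS$ covers structural completeness of $\SCDS$ itself, and the rest is heredity). Since $\DS \DedExt \SCDS \DedExt \DS^*$, the system $\DS^*$ is a deductive extension of $\DS$; and $\Rules_b \subseteq \SCR$ is derivable in $\SCDS$, hence in $\DS^*$, hence admissible in $\DS^*$. So the hypothesis gives $\ds{\Ax^*}{\Rules^* + \Rules_b} \DedEqv \SC{\DS^*}$. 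But adjoining the $\DS^*$-derivable rules $\Rules_b$ changes nothing (Proposition~\ref{pr-addrules}), so $\DS^* \DedEqv \SC{\DS^*}$; hence every rule admissible in $\DS^*$ is derivable in $\SC{\DS^*}$ and therefore in $\DS^*$, i.e. $\DS^*$ is structurally complete.

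The routine ingredients are exactly Propositions~\ref{pr-addtheorems}, \ref{pr-addrules} and \ref{pr-admlogequal}: adjoining derivable rules or derivable formulas changes nothing, and admissibility depends only on the logic. The step I expect to require the most care is the observation in the first paragraph — this is where a relative base in $\DS$ is turned into a containment of consequence relations, and where one must notice that $\DS'_b$ stays a deductive extension of $\SCDS$ even though $\LogL(\DS')$ may be strictly larger than $\LogL(\SCDS) = \LogL(\DS)$. That asymmetry is what lets a statement about the deductive extensions of $\SCDS$ control relative bases in all deductive extensions of $\DS$.
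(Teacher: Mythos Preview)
Your proof is correct and follows essentially the same approach as the paper's: both hinge on the observation that $\DS'_b = \ds{\Ax'}{\Rules'+\Rules_b}$ is a deductive extension of $\SCDS$ logically equal to $\DS'$, and both converse arguments are virtually identical. The only cosmetic difference is that the paper argues the forward direction by contradiction (if $\Rules_b$ is not a relative base for $\DS'$ then $\DS'_b$ is a non-structurally-complete extension of $\SCDS$), while you argue it directly (structural completeness of $\DS'_b$ forces $\DS'_b \DedEqv \SC{\DS'}$); these are the same argument read in opposite directions.
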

\begin{proof}
Suppose $\SCDS$ is a hereditarily structurally complete deductive system and $\DS' \bydef \ds{\Ax'}{\Rules'}$ is a deductive extension of $\DS$ admitting all rules from $\Rules_b$. Assume for contradiction that $\Rules_b$ is not a relative base of admissible rules of 
$\DS$. We will demonstrate that in this case, if $\DS_b \bydef \ds{\Ax'}{\Rules' + \Rules_b}$, then

\begin{tabular}{rl}
(a)& $\DS_b$ is a deductive extension of $\SCDS$, i.e. $\SCDS \DedExt \DS_b $;\\
(b)&$\DS_b$ is not structurally complete. 
\end{tabular}

\noindent Thus (a) and (b)  entail that $\SCDS$ has a non-structurally complete deductive extension and, therefore, $\SCDS$ is not $\HSCpl$ contrary to the assumption.

\textbf{Proof of (a).} By assumption, $\Rules_b$ is a relative basis for $\DS$, that is,
\begin{equation}
\ds{\Ax}{\Rules + \Rules_b} \DedEqv \SC{\DS}. \label{eqth-HSCplB1}
\end{equation}
By \eqref{eq-addtheorems} and \eqref{eq-addrules} we also have
\begin{equation}
\DS' = \ds{\Ax'}{\Rules'} \DedEqv \ds{\LogL(\DS')}{\Rules'} \DedEqv \ds{\LogL(\DS')}{\Rules(\DS')}. \label{eqth-HSCplB2}
\end{equation}
Recall that $\DS'$ is a deductive extension of $\DS$, therefore,
\begin{equation}
\Ax \subseteq \LogL(\DS') \text{ and } \Rules \subseteq \Rules(\DS'). \label{eqth-HSCplB2}
\end{equation}
At the same time, $\DS_b$ was obtained from $\DS'$ by adding to $\DS'$ new rules, hence, $\DS_b$ is a deductive extension of $\DS'$, and, therefore, we can extend \eqref{eqth-HSCplB2}:
\begin{equation}
\Ax \subseteq \LogL(\DS') \subseteq \LogL(\DS_b) \text{ and } \Rules \subseteq \Rules(\DS') \subseteq \Rules(\DS_b). \label{eqth-HSCplB3}
\end{equation}
Moreover, due to $\Rules_b \subseteq \Rules(\DS_b)$, from \eqref{eqth-HSCplB3} we have
\begin{equation}
\Ax \subseteq \LogL(\DS_b) \text{ and } \Rules + \Rules_b \subseteq \Rules(\DS_b). \label{eqth-HSCplB4}
\end{equation}
And from \eqref{eqth-HSCplB1}, \eqref{eqth-HSCplB4} and  by \eqref{eq-addtheorems}, \eqref{eq-addrules}
\begin{equation}
\SCDS \DedEqv \ds{\Ax}{\Rules + \Rules_b} \DedExt \ds{\LogL(\DS_b)}{\Rules(\DS_b)} \DedEqv \ds{\Ax'}{\Rules' + \Rules_b} = \DS_b, \label{eqth-HSCplB5}
\end{equation}
that is, $\DS_b$ is a deductive extension of $\SCDS$.

\textbf{Proof of (b).} By assumption, rules $\Rules_b$ do not form a relative base for $\DS'$,that is, 
\begin{equation}
\DS_b = \ds{\Ax'}{\Rules' + \Rules_b} \not\DedEqv \ds{\Ax'}{\SC{\Rules}(\DS')}. \label{eqth-HSCplB6}
\end{equation}
Now, we only need to establish that $\DS_b$ and $\ds{\Ax'}{\SC{\Rules}(\DS')}$ have the same logic.
Indeed, by the assumption of the theorem, rules $\Rules_b$ are admissible in $\DS'$, so
\begin{equation}
\ds{\Ax'}{\Rules'} \LogEqv \ds{\Ax'}{\Rules' + \Rules_b} \label{eqth-HSCplB7}
\end{equation}
and, therefore, by \eqref{eqth-HSCplB7} and \eqref{eq-admnotext} 
\begin{equation}
\ds{\Ax'}{\SC{\Rules}(\DS')} \LogEqv \ds{\Ax'}{\Rules'} \LogEqv \ds{\Ax'}{\Rules' + \Rules_b}. \label{eqth-HSCplB8}
\end{equation}

Conversely, suppose that rules $\Rules_b$ form a relative base of admissible rules in every deductive extension of $\DS$ admitting rules $\Rules_b$. We need to show that $\SCDS$ is $\HSCpl$, that is, that its every deductive extension is structurally complete. 

Assume that $\DS' \bydef \ds{\Ax'}{\Rules'}$ is a deductive extension of $\SCDS$. Then by \eqref{eq-addtheorems} and  by \eqref{eq-addrules},
\begin{equation}
 \DS' = \ds{\Ax'}{\Rules'} \DedEqv \ds{\LogL(\DS')}{\Rules(\DS')}. \label{eqth-HSCplB9}
\end{equation}
Let us recall that rules $\Rules_b$ are admissible in $\DS$ and, therefore, $\Rules_b \subseteq \Rules(\SCDS)$ and, by the assumption, $\DS'$ is a deductive extension of $\SCDS$, so, 
\begin{equation}
\Rules_b \subseteq \Rules(\SCDS) \subseteq \Rules(\DS'). \label{eqth-HSCplB10}
\end{equation}
\eqref{eqth-HSCplB10} means, that rules $\Rules_b$ are derivable in $\DS'$, and $\DS'$ is structurally complete due to all rules from a relative base (of admissible in $\DS'$ rules) are derivable in $\DS'$.
\end{proof}

\begin{example} In \cite{Rybakov_Intermediate_1993} Rybakov described the class of all axiomatic extensions of $\IPC$ admitting all rules from $\SC{\IPC}$. In the Section \ref{sec-Int} we discuss this class in more details.  
\end{example}

\section{Hereditarily Structural Complete Extensions of superintuitionistic logics} \label{sec-Ext}

In this section we study hereditary structural completeness of deductive extensions of $\IPC$. All hereditarily structural complete axiomatic extensions of $\IPC$ (that is, the deductive systems of type $\ds{\Ax^i + \Ax}{\MP}$). The set $\mathcal{HSC}$ of hereditarily structural complete axiomatic extensions of $\IPC$ (and of $\KF$ for this matter) has rather nice properties:

\begin{tabular}{rl}
(a)& $\mathcal{HSC}$ is countably infinite;\\
(b)& every deductive system from $\mathcal{HSC}$ is finitely axiomatizable;\\
(c)& $\mathcal{HSC}$ contains the least (relative to $\DedExt$) deductive system.
\end{tabular}

\noindent As we will see, in a general case the situation is more complex, namely, none of the above properties holds (see Corollary \ref{cor-cont} below). Failure of (c) also entails that the criteria similar to ones established in \cite{Citkin_1978} and \cite{Rybakov_Hereditary_1995}, are impossible. In this section, we focus on hereditary structural completions of the standard superintuitionistic logics listed in the Table  \ref{table1}.

\subsection{Hereditary Structural Completeness of Int + Visser Rules} \label{sec-Int}

First, we establish that $\SC{\IPC}$ is hereditarily structurally complete, and then we will consider some deductive extensions of $\SC{\IPC}$.

\begin{theorem} \label{th-SCIPC} Structural completion of $\IPC$ is $\HSCpl$.
\end{theorem}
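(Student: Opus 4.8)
The plan is to invoke Theorem~\ref{th-HSCplB} with $\DS \bydef \IPC$ and with $\Rules_b$ taken to be the set of Visser's rules $\{V_n : n \geq 1\}$, which by Iemhoff's result (quoted in the example after Proposition~\ref{pr-relbasetobase}) form a relative base of admissible rules for $\IPC$. By Theorem~\ref{th-HSCplB}, it then suffices to show that $\{V_n\}$ forms a relative base of admissible rules in \emph{every} deductive extension $\DS'$ of $\IPC$ in which all the $V_n$ are admissible. So the whole argument reduces to this inheritance statement about Visser's rules.

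First I would reduce to the algebraic/semantic picture. A deductive extension $\DS'$ of $\IPC$ corresponds to a (finitary, structural) consequence relation extending $\vdash_{\IPC}$, hence to a quasivariety $\mathcal{Q}$ of Heyting algebras (the models of $\DS'$), and admissibility/derivability of rules in $\DS'$ translate into validity in the free algebras of $\mathcal{Q}$ versus validity throughout $\mathcal{Q}$. The hypothesis that every $V_n$ is admissible in $\DS'$ says precisely that the free algebras of $\mathcal{Q}$ validate all the $V_n$; what must be shown is that then $\mathcal{Q}$ itself, generated by those free algebras, is \emph{primitive} (every subquasivariety is defined relative to $\mathcal{Q}$ by the axiomatic, i.e.\ equational, part together with the $V_n$) --- equivalently, that the $V_n$ together with $\DS'$ already derive every rule admissible in $\DS'$. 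The key known fact to bring in here is the characterization, going back to Ghilardi and Iemhoff, of admissibility in intermediate logics via projective approximations / extension properties of finite Heyting algebras: a finite Heyting algebra in which the $V_n$ are valid is ``tight'' enough that every finitely presented algebra has a projective approximation built from algebras in $\mathcal{Q}$, and this is exactly what makes the $V_n$ a base for $\mathcal{Q}$.

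The main step, and the expected obstacle, is proving that inheritance: that validity of all Visser rules in the free algebras of $\mathcal{Q}$ forces $\{V_n\}$ to be a relative base for the admissible rules of $\DS'$, not merely of $\IPC$. Concretely I would argue contrapositively along the lines already laid out inside the proof of Theorem~\ref{th-HSCplB}: if some rule $\ruleR$ were admissible in $\DS'$ but not derivable from $\DS' + \{V_n\}$, one produces a refuting algebra $\Alg{A} \in \mathcal{Q}$ together with an assignment validating the premises of $\ruleR$ but not its conclusion; passing to the finitely generated (hence, by local finiteness of Heyting algebras on the relevant generators, finite) subalgebra $\Alg{B}$ one gets a finite $\Alg{B}\in\mathcal{Q}$ on which $\ruleR$ fails. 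Since the $V_n$ are admissible in $\DS'$ and $\DS' \LogEqv \DS' + \{V_n\}$, the logic is unchanged, so one can use Ghilardi's projectivity machinery to approximate the finitely presented algebra associated with the premises of $\ruleR$ by \emph{projective} (equivalently, in the intermediate case, ``extension-property'') algebras; validity of all $V_n$ is precisely the condition guaranteeing these approximants can be chosen inside $\mathcal{Q}$, and projectivity lets the conclusion of $\ruleR$ be transferred back, contradicting failure of $\ruleR$ on $\Alg{B}$. Assembling the projective-approximation argument and checking it stays inside $\mathcal{Q}$ is the technical heart; the rest is the bookkeeping already packaged in Theorem~\ref{th-HSCplB}.

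Finally I would note the payoff in the form used later: since $\SC{\IPC} = \ds{\IPC}{\SC{\Rules}}$ is $\HSCpl$, and $\{V_n\}$ (with $\MP$) is actually a base, the system $\ds{\IPC}{\MP + \{V_n\}}$ is deductively equal to $\SC{\IPC}$ and hence also hereditarily structurally complete --- this is the concrete finitely-based witness that $\Int$ is definable by an $\HSCpl$ deductive system, and it is the starting point for the discussion of deductive extensions of $\SC{\IPC}$ in the remainder of Section~\ref{sec-Int}.
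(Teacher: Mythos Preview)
Your high-level strategy is exactly the paper's: invoke Theorem~\ref{th-HSCplB} with $\Rules_b = \{V_n : n \geq 1\}$, reducing everything to the inheritance claim that Visser's rules form a relative base of admissible rules in every deductive extension of $\IPC$ which admits them. The paper's proof, however, does not attempt to establish this inheritance claim from scratch---it simply cites it as Theorem~3.9 of Iemhoff's \emph{Intermediate logics and Visser's rules} (2005), and the entire proof is two sentences long.

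Your attempt to re-derive Iemhoff's theorem contains a genuine error. You write ``passing to the finitely generated (hence, by local finiteness of Heyting algebras on the relevant generators, finite) subalgebra''---but Heyting algebras are \emph{not} locally finite: already the free Heyting algebra on two generators is infinite, and so is $\Alg{RN}$, the one-generated free algebra. So the reduction to a finite refuting algebra $\Alg{B}$ does not go through as you describe, and the subsequent use of projective approximations, while in the right spirit (Ghilardi's projective formulas are indeed the engine behind Iemhoff's proof), cannot be organized around a finite $\Alg{B}$ obtained this way. The actual argument in Iemhoff's paper is syntactic/proof-theoretic rather than via finite algebras: it shows directly that the Visser rules, once admissible, derive every admissible rule by manipulating projective approximations of the \emph{premise} formula, with no appeal to local finiteness. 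If you want to reprove the inheritance claim rather than cite it, that is the route to take; otherwise, citing Iemhoff and applying Theorem~\ref{th-HSCplB} is all that is needed.
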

\begin{proof} It was established in \cite[Theorem 3.20]{Iemhoff_Admissible_2001} that Visser's rules $V_n, n>0$ are admissible in $\IPC$, and it was observed in \cite[Theorem 3.9]{Iemhoff_Intermediate_2005} that Visser's rules form a base of admissible rules in every deductive extension of $\IPC$ which admits them. Hence, we can apply Theorem \ref{th-HSCplB} and complete the proof.  
\end{proof}

Recall that Visser's rules are admissible in $\mathsf{KC}$ and $\mathsf{M}_n$ (see \cite[Theorem 5.1]{Iemhoff_Intermediate_2005}) and  Visser's rules are derivable in $\mathsf{Bd}_1,\mathsf{G}_n,\mathsf{LC}$, and $\mathsf{Sm}$ (see \cite[Theorem 5.3]{Iemhoff_Intermediate_2005}). And we know that any deductive extension of $\HSCpl$ system is hereditarily structurally complete. Hence, we have the following.

\begin{cor} \label{cor-HScplFirst} Structural completions of the following deductive system are $\HSCpl$: $\mathsf{KC}, \mathsf{M}_n,$ $\mathsf{KC}, \mathsf{Bd}_1,\mathsf{G}_n,\mathsf{LC},\mathsf{Sm}$.  
\end{cor}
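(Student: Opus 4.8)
The plan is to reduce everything to Theorem~\ref{th-SCIPC}, using the observation recorded right after the definition of hereditary structural completeness that \emph{every} deductive extension of an $\HSCpl$ deductive system is again $\HSCpl$. Thus, for each logic $\LogL$ among $\mathsf{KC},\mathsf{M}_n,\mathsf{Bd}_1,\mathsf{G}_n,\mathsf{LC},\mathsf{Sm}$, it suffices to show that the structural completion $\SC{\ds{\LogL}{\MP}}$ is a deductive extension of $\SC{\IPC}$, and then to invoke Theorem~\ref{th-SCIPC}.

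First I would record, from the proof of Theorem~\ref{th-SCIPC} together with the Iemhoff example preceding it, that $\{\MP\}\cup\{V_n : n>0\}$ is a base of admissible rules of $\IPC$, so that
\[
\SC{\IPC}\ \DedEqv\ \ds{\Int}{\{\MP\}\cup\{V_n : n>0\}} .
\]
Now fix such an $\LogL$. Since $\LogL$ is superintuitionistic, $\Int\subseteq\LogL$; as $\LogL$ is also the logic of $\ds{\LogL}{\MP}$, and hence of $\SC{\ds{\LogL}{\MP}}$, every axiom of $\ds{\Int}{\{\MP\}\cup\{V_n\}}$ is a theorem of $\SC{\ds{\LogL}{\MP}}$. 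The rules of $\SC{\ds{\LogL}{\MP}}$ are, by definition, precisely the rules admissible in $\ds{\LogL}{\MP}$, so it remains only to check that $\MP$ and each $V_n$ are admissible in $\ds{\LogL}{\MP}$; this is exactly where the cited facts enter. Indeed $\MP$ is admissible trivially, and the Visser rules are admissible in $\mathsf{KC}$ and $\mathsf{M}_n$ by \cite[Theorem~5.1]{Iemhoff_Intermediate_2005}, and derivable, hence admissible, in $\mathsf{Bd}_1,\mathsf{G}_n,\mathsf{LC},\mathsf{Sm}$ by \cite[Theorem~5.3]{Iemhoff_Intermediate_2005}. Consequently each rule of $\ds{\Int}{\{\MP\}\cup\{V_n\}}$ is derivable in $\SC{\ds{\LogL}{\MP}}$, and by Propositions~\ref{pr-addtheorems} and~\ref{pr-addrules},
\[
\SC{\IPC}\ \DedEqv\ \ds{\Int}{\{\MP\}\cup\{V_n : n>0\}}\ \DedExt\ \SC{\ds{\LogL}{\MP}} .
\]
Since $\SC{\IPC}$ is $\HSCpl$ by Theorem~\ref{th-SCIPC}, its deductive extension $\SC{\ds{\LogL}{\MP}}$ is $\HSCpl$ too.

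There is no substantial obstacle beyond Theorem~\ref{th-SCIPC} itself; the one point that needs attention is that for $\mathsf{KC}$ and $\mathsf{M}_n$ one must use \emph{admissibility} of the Visser rules rather than mere validity, since it is admissibility that places them among the rules of the structural completion $\SC{\ds{\LogL}{\MP}}$ and thereby makes that system — and not $\ds{\LogL}{\MP}$ itself — a deductive extension of $\SC{\IPC}$. For $\mathsf{Bd}_1,\mathsf{G}_n,\mathsf{LC},\mathsf{Sm}$ the Visser rules are already derivable, so there $\ds{\LogL}{\MP}$ is itself structurally complete and coincides, up to $\DedEqv$, with its own structural completion, and the conclusion follows a fortiori.
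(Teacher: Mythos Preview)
Your proposal is correct and follows essentially the same route as the paper: use the admissibility (resp.\ derivability) of the Visser rules in the listed logics to exhibit each $\SC{\ds{\LogL}{\MP}}$ as a deductive extension of $\SC{\IPC}$, and then apply Theorem~\ref{th-SCIPC} together with the fact that deductive extensions of $\HSCpl$ systems are $\HSCpl$. You simply spell out in more detail why admissibility of the $V_n$ in $\LogL$ yields $\SC{\IPC}\DedExt\SC{\ds{\LogL}{\MP}}$, which the paper leaves implicit.
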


Rybakov observed \cite[Theorem 7]{Rybakov_Intermediate_1993} that there is continuum many intermediate logics admitting all rules admissible in $\IPC$. Hence, there is continuum many not logically equivalent deductive systems in which all Visser's rules are admissible. Thus, the following holds.

\begin{cor} \label{cor-cont} There is continuum many not logically equivalent $\HSCpl$ deductive systems (extending $\SC{INT}$). Therefore, there are not finitely axiomatizable $\HSCpl$ deductive systems (extending $\SC{INT}$). 
\end{cor}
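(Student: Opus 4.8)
The plan is to combine Theorem \ref{th-SCIPC} with the cited result of Rybakov on the abundance of intermediate logics admitting the admissible rules of $\IPC$. First I would recall that by Rybakov's theorem (\cite[Theorem 7]{Rybakov_Intermediate_1993}) there is a family $\{\LogL_\alpha : \alpha < 2^{\aleph_0}\}$ of pairwise distinct intermediate logics, each of which admits every rule admissible in $\IPC$, in particular all Visser's rules $V_n$. For each such logic consider the deductive system $\DS_\alpha \bydef \ds{\LogL_\alpha}{\MP}$. Since the Visser rules are admissible in $\DS_\alpha$ and form a base of admissible rules in every deductive extension of $\IPC$ that admits them (\cite[Theorem 3.9]{Iemhoff_Intermediate_2005}), the structural completion $\SC{\DS_\alpha}$ is deductively equal to $\ds{\LogL_\alpha}{\MP + V_1 + V_2 + \dots}$; moreover each $\SC{\DS_\alpha}$ is a deductive extension of $\SC{\IPC}$, so by Theorem \ref{th-SCIPC} (and the remark that any deductive extension of an $\HSCpl$ system is $\HSCpl$) every $\SC{\DS_\alpha}$ is hereditarily structurally complete.

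Next I would argue that the systems $\SC{\DS_\alpha}$ are pairwise not logically equivalent. This is immediate: $\LogL(\SC{\DS_\alpha}) = \LogL(\DS_\alpha) = \LogL_\alpha$ by \eqref{eq-admnotext}, and the $\LogL_\alpha$ were chosen pairwise distinct. Hence we obtain continuum many pairwise not logically equivalent $\HSCpl$ deductive systems, all of which extend $\SC{\IPC}$, which is the first assertion.

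For the second assertion, that there are not finitely axiomatizable $\HSCpl$ deductive systems among these, I would use a counting argument: up to deductive (hence logical) equivalence there are only countably many finitely axiomatizable deductive systems in a fixed language, since a finite set of axiom schemes together with a finite set of rules is a finite object over a countable alphabet. Since continuum many of the $\SC{\DS_\alpha}$ are pairwise not logically equivalent, at most countably many of them can be logically equivalent to a finitely axiomatizable one; so all but countably many $\SC{\DS_\alpha}$ are not finitely axiomatizable, and in particular such systems exist.

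The only real subtlety — the point I would be careful about — is the phrase "extending $\SC{\IPC}$": I must check that each $\SC{\DS_\alpha}$ genuinely is a deductive extension of $\SC{\IPC}$, i.e. $\vdash_{\SC{\IPC}} \subseteq \vdash_{\SC{\DS_\alpha}}$. This follows because $\SC{\IPC} \DedEqv \ds{\Int}{\MP + V_1 + V_2 + \dots}$ (by Theorem \ref{th-SCIPC} and the base result), $\Int \subseteq \LogL_\alpha$, and all of $\MP$ and the $V_n$ are derivable in $\SC{\DS_\alpha}$ (the $V_n$ because they are admissible in $\DS_\alpha$ and a base, hence derivable after completion), so Proposition \ref{pr-addtheorems} and Proposition \ref{pr-addrules} give the inclusion. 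Everything else is bookkeeping with the equivalences already established in Sections \ref{sec-general}--\ref{sec-SCpl}.
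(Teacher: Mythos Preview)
Your proposal is correct and follows exactly the route the paper takes: the paper's ``proof'' is just the sentence preceding the corollary, invoking Rybakov's continuum of intermediate logics admitting all $\IPC$-admissible rules and then implicitly using Theorem~\ref{th-SCIPC} together with the fact that deductive extensions of an $\HSCpl$ system are $\HSCpl$. Your write-up simply spells out the details (the verification that each $\SC{\DS_\alpha}$ extends $\SC{\IPC}$, the logical non-equivalence via \eqref{eq-admnotext}, and the cardinality argument for non-finite-axiomatizability) that the paper leaves to the reader.
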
 

In the following section we will prove that there is continuum many structural completions of of superintuitionistic logics that are not hereditarily structurally complete.

\subsection{Hereditary Structural Completeness: The Algebraic View} \label{sec-alg}

Generally speaking, there are two ways to prove that a deductive system $\DS$ is not structurally complete: to present an admissible in $\DS$ and not $\DS$-derivable rule, or to use semantic means. It is known (see e.g. \cite{Olson_Raftery_Alten_2008}) that each (finitely algebraizable in sense of Blok and Pigozzi \cite{Blok_Pigozzi_Algebraizable_1989}) deductive system corresponds to a quasivariety of algebras which are models for this system. In this Section we use the second approach to show that there is continuum many superintuitionistic logics, whose structural completion is not $\HSCpl$. But first, we need to recall some notions and facts from the theory of quasivarieties.

\textbf{Basic facts from theory of quasivarieties.} Let us recall some basic notions about models of superintuitionistic logics. As usual, we use Heyting algebras\footnote{Or frames representing them - see e.g. \cite{Rybakov_Book}, where Heyting algebras are called "pseudo-Boolean algebras".} as models for superintuitionistic logics. A bounded distributive lattice $\langle \alg{A}; \land,\lor,\zero,\one \rangle$ with  relative pseudocomplementation $\to$ is called a \textit{Heyting algebra} (see e.g. \cite[Section II]{Burris_Sanka}), and we abbreviate $\alg{a} \to \zero$ as $\neg \alg{a}$. A formula $A$ is \textit{refuted in} a given (Heyting) algebra $\Alg{A}$, if there is a valuation $\nu$ in $\Alg{A}$ such that $\nu(A) \neq \one$. Otherwise $A$ is said to be \textit{valid in} $\Alg{A}$. A rule $\ruleR \bydef A_1,\dots,A_n/B$ is \textit{refuted in} a given algebra $\Alg{A}$, if there is a valuation $\nu$ in $\Alg{A}$ such that $\nu(A_i) = \one$ for all $i=1,\dots,n$, but $\nu(B) \neq \one$. Otherwise $\ruleR$ is said to be \textit{valid in} $ \Alg{A}$. Given a set of formulas $\Gamma$ and a formula $A$ (or set of rules $\Rules$ and a rule $\ruleR$) we say that an \textit{algebra} $\Alg{A}$ \textit{separates} $A$ \textit{from} $\Gamma$ (or that $\Alg{A}$ separates $\ruleR$ from $\rules$), if $\Alg{A}$ refutes $A$ while all formulas from $\Gamma$ are valid in $\Alg{A}$ (if $\Alg{A}$ refutes $\ruleR$, while all rules from $\Rules$ are valid in $\Alg{A}$). 

With each superintuitionistic deductive system $\DS \bydef \Ds$ one can associate a quasivariety $\var{Q}(\DS)$ of all algebras in which all axioms and all rules of $\DS$ are valid\footnote{All necessary information about quasivarieties the reader can find in \cite{GorbunovBookE}.} . Moreover, given two deductive systems $\DS_1,\DS_2$,
\[
\DS_1 \DedExt \DS_2 \text{ if and only if } \var{Q}(\DS_1) \supseteq \var{Q}(\DS_2).
\]
and, hence,
\[
\DS_1 \DedEqv \DS_2 \text{ if and only if } \var{Q}(\DS_1) = \var{Q}(\DS_2).
\]
And for every quasivariety $\qvar$ there is a deductive system $\DS$ such that $\qvar = \var{Q}(\DS)$.

Let $\DS$ be a deductive system, $\qvar(\DS)$ be a corresponding quasivariety, and $\FAlg{\qvar(\DS)}{\omega}$ be a free algebra of quasivariety $\Qvar{\DS}$. Then (\cite{Bergman_Structural_1991}[Proposition 2.3]),
\begin{equation}
 \Qvar{\SCDS} = \Qvar{\FAlg{\qvar(\DS)}{\omega}}.
\end{equation}
If $\qvar$ is a quasivariety, we say that $\SC{\qvar} \bydef \Qvar{\FAlg{\qvar}{\omega}}$ is a \textit{structural completion of quasivariety} $\qvar$.

A quasivariety $\qvar$ is \textit{primitive} \cite{GorbunovBookE} if each its subquasivariety is a relative variety, that is, for each subquasivariety $\qvar' \subseteq \qvar$ there is a variety $\vvar$ of such that $\qvar' = \qvar \cap \vvar$.

There is correspondence between hereditarily structurally complete deductive systems and primitive quasivarieties.

\begin{prop} (Comp. e.g. \cite[Corollary 7.15]{Olson_Raftery_Alten_2008}) \label{pr_HScplPrim} A deductive system $\DS$ is hereditarily structurally complete if and only if $\var{Q}(\DS)$ is primitive. 
\end{prop}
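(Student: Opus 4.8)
The plan is to establish both directions via the standard dictionary between deductive extensions of $\DS$ and subquasivarieties of $\qvar \bydef \var{Q}(\DS)$, combined with Theorem \ref{th-HScpl}, which tells us that $\DS$ is hereditarily structurally complete precisely when every deductive extension of $\DS$ is axiomatic (equivalently, structurally complete). Recall from the excerpt that $\DS_1 \DedExt \DS_2$ iff $\var{Q}(\DS_1) \supseteq \var{Q}(\DS_2)$, that every subquasivariety of $\qvar$ arises as $\var{Q}(\DS')$ for some deductive extension $\DS'$ of $\DS$, and that axiomatic extensions of $\DS$ correspond exactly to subvarieties-cut-down, i.e. to subquasivarieties of the form $\qvar \cap \vvar$ for $\vvar$ a variety (adding axioms to a deductive system corresponds on the algebraic side to passing to the subclass satisfying those equations, and conversely). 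These equivalences are all either stated in the excerpt or are routine consequences of finite algebraizability of superintuitionistic deductive systems in the sense of Blok--Pigozzi.

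First I would prove the forward direction. Assume $\DS$ is hereditarily structurally complete, and let $\qvar' \subseteq \qvar$ be an arbitrary subquasivariety. Pick a deductive extension $\DS'$ of $\DS$ with $\var{Q}(\DS') = \qvar'$ (possible since every subquasivariety of $\qvar$ is realized this way). By Theorem \ref{th-HScpl}, since $\DS$ is $\HSCpl$, the extension $\DS'$ is axiomatic, say $\DS'$ is defined by $\ds{\Ax + \Gamma}{\Rules}$ for some set $\Gamma$ of formulas (where $\DS \bydef \Ds$). On the algebraic side, adjoining the axioms $\Gamma$ corresponds to intersecting $\qvar$ with the variety $\vvar$ axiomatized by the equations $\{A \approx \one : A \in \Gamma\}$; hence $\qvar' = \qvar \cap \vvar$. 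Since $\qvar'$ was arbitrary, $\qvar$ is primitive.

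For the converse, assume $\qvar = \var{Q}(\DS)$ is primitive. To show $\DS$ is hereditarily structurally complete it suffices, by Theorem \ref{th-HScpl}, to show every deductive extension $\DS'$ of $\DS$ is axiomatic. Let $\DS'$ be such an extension and set $\qvar' \bydef \var{Q}(\DS') \subseteq \qvar$. By primitivity there is a variety $\vvar$ with $\qvar' = \qvar \cap \vvar$; the variety $\vvar$ is axiomatized by some set $E$ of equations, which via algebraizability translate back into a set $\Gamma$ of formulas. Then the axiomatic extension $\DS''$ of $\DS$ obtained by adjoining $\Gamma$ to the axioms of $\DS$ satisfies $\var{Q}(\DS'') = \qvar \cap \vvar = \qvar'$, so $\DS'' \DedEqv \DS'$. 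Thus $\DS'$ is (deductively equivalent to) an axiomatic extension of $\DS$, as required.

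The only genuinely delicate point is making the passage between equational axiomatizations of varieties and formula-axiomatizations of deductive extensions fully rigorous --- that is, checking that ``axiomatic extension of $\DS$'' on the syntactic side matches ``$\qvar \cap \vvar$ for $\vvar$ a variety'' on the algebraic side, in both directions. This is exactly the content of finite algebraizability of superintuitionistic deductive systems (each equation $s \approx t$ is equivalent, modulo the algebraizing pair, to the formula $s \leftrightarrow t$, and each formula $A$ to the equation $A \approx \one$), so I would cite Blok--Pigozzi and the fact recorded in the excerpt that axiomatic extensions of $\vdash_\DS$ are those definable by $\ds{\Ax + \Gamma}{\Rules}$, and keep the verification brief. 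Everything else is a direct unwinding of the Galois connection between deductive extensions and subquasivarieties already laid out before the proposition.
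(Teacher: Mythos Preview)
The paper does not supply its own proof of this proposition; it simply records the result with a reference to \cite[Corollary~7.15]{Olson_Raftery_Alten_2008}. Your argument via Theorem~\ref{th-HScpl} and the deductive-extension/subquasivariety correspondence is the standard route and is essentially correct.

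There is one small gap in the converse direction. You write that, by Theorem~\ref{th-HScpl}, it suffices to show every deductive extension of $\DS$ is axiomatic; but Theorem~\ref{th-HScpl} is stated under the standing hypothesis that $\DS$ is already structurally complete, and you have not verified this from primitivity of $\qvar$. The fix is short: once you have shown that every deductive extension of $\DS$ is axiomatic, apply this in particular to $\SCDS$. Since $\SCDS$ is then an axiomatic extension of $\DS$ with $\LogL(\SCDS) = \LogL(\DS)$, the added axioms already lie in $\LogL(\DS)$, whence $\SCDS \DedEqv \DS$ and $\DS$ is structurally complete. (Equivalently, on the algebraic side: $\SC{\qvar} = \Qvar{\FAlg{\qvar}{\omega}}$ is a subquasivariety of $\qvar$ generating the same variety as $\qvar$; primitivity forces $\SC{\qvar} = \qvar \cap \vvar$ for some variety $\vvar$, and since $\SC{\qvar}$ generates the variety $\CHom\CSub\CProd(\qvar) \supseteq \qvar$ we get $\qvar \subseteq \vvar$, hence $\SC{\qvar} = \qvar$.) With that sentence inserted, Theorem~\ref{th-HScpl} applies and your argument goes through.
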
 
Thus, if for a deductive system $\DS$ its structural completion is $\HSCpl$ if and only if $\SC{\qvar}(\DS)$ is primitive.

Let $\qvar$ be a quasivariety and $\Alg{A} \in \qvar$ be a  non-trivial finite algebra. $\Alg{A}$ is said to be $\qvar$-irreducible, if $\Alg{A}$ is not (isomorphic to) a subdirect product of algebras from $\qvar$ having less elements then $\Alg{A}$. And $\Alg{A}$ is said to be \textit{weakly} $\qvar$-\textit{projective}, if $\Alg{A}$ embeds in every its homomorphic preimage from $\qvar$ (comp. \cite{GorbunovBookE}). And we say that an algebra $\Alg{A}$ is \textit{totally non-projective}, if $\Alg{A}$ is not weakly projective in the quasivariety $\Qvar{\Alg{A}}$ it generates. It is easy to see that a totally non-projective algebra is not weekly $\qvar$-projective in any quasivariety $\qvar$ it belongs to.

\begin{example} Algebra $\Alg{C}_7'$ corresponding to frame $\mathcal{C}_7'$ depicted at Fig.\ref{fig-totnonpr} is totally non-projective. Indeed, algebra $\Alg{C}_5'$ is a subalgebra of $\Alg{C}_7'$ and, therefore, $\Alg{C}_5' \in \Qvar{\Alg{C}_7'}$. Also, $\Alg{C}_{10}'$ is a subdirect product of $\Alg{C}_5'$ and $\Alg{C}_7'$, so, $\Alg{C}_{10}' \in \Qvar{\Alg{C}_7'}$. But$\Alg{C}_7'$ is a homomorphic image of $\Alg{C}_{10}'$, and $\Alg{C}_7'$ is not embeddable in $\Alg{C}_{10}'$. Hence, $\Alg{C}_7'$ is not weakly $\Qvar{\Alg{C}_7'}$-projective, that is, $\Alg{C}_7'$ is totally non-projective.

\begin{figure}[ht]
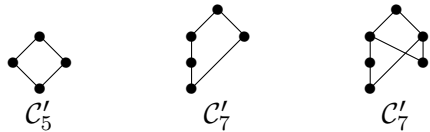

\[
\begin{array}{ccccc}
\ctdiagram{
\ctnohead
\ctinnermid
\ctel 0,0,10,10:{}
\ctel 0,0,-10,10:{}
\ctel 0,20,10,10:{}
\ctel 0,20,-10,10:{}
\ctv 0,0:{\bullet}
\ctv 10,10:{\bullet}
\ctv -10,10:{\bullet}
\ctv 0,20:{\bullet}
}
& \quad \quad &
\ctdiagram{
\ctnohead
\ctinnermid
\ctel 0,0,0,20:{}
\ctel 0,0,20,20:{}
\ctel 0,20,10,30:{}
\ctel 20,20,10,30:{}
\ctv 0,0:{\bullet}
\ctv 0,10:{\bullet}
\ctv 0,20:{\bullet}
\ctv 10,30:{\bullet}
\ctv 20,20:{\bullet}
}
& \quad \quad &
\ctdiagram{
\ctnohead
\ctinnermid
\ctel 0,0,0,20:{}
\ctel 20,10,20,20:{}
\ctel 0,20,10,30:{}
\ctel 20,20,10,30:{}
\ctel 0,20,20,10:{}
\ctel 0,0,20,20:{}
\ctv 0,0:{\bullet}
\ctv 0,10:{\bullet}
\ctv 0,20:{\bullet}
\ctv 10,30:{\bullet}
\ctv 20,20:{\bullet}
\ctv 20,10:{\bullet}
}
\\
\mathcal{C}_5'&&\mathcal{C}_{7}'&& \mathcal{C}_{7}'    
\end{array}
\]
\caption{Example of totally non-projective algebra.} \label{fig-totnonpr}
\end{figure}
\end{example}

Let us note the following simple but nevertheless helpful proposition.   

\begin{prop} Any quasivariety containing a totally non-projective algebra is not primitive.
\end{prop}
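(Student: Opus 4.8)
The plan is to reduce the statement to the algebraic characterization already in hand: by Proposition \ref{pr_HScplPrim}, a deductive system is hereditarily structurally complete iff the associated quasivariety is primitive, so it suffices to prove the purely algebraic claim that if a quasivariety $\qvar$ contains a totally non-projective algebra $\Alg{A}$, then $\qvar$ is not primitive. I would prove the contrapositive in disguise: show that a totally non-projective algebra $\Alg{A}$ witnesses the failure of the primitivity condition, namely that the subquasivariety $\qvar' \bydef \Qvar{\Alg{A}}$ generated by $\Alg{A}$ inside $\qvar$ is \emph{not} of the form $\qvar \cap \vvar$ for any variety $\vvar$.

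First I would recall the standard criterion (Gorbunov) that a finitely generated quasivariety $\Qvar{\Alg{A}}$ is a relative variety of $\qvar$ --- i.e., equals $\qvar \cap \vvar$ for some variety $\vvar$ --- precisely when it is closed under homomorphic images within $\qvar$, equivalently when every finite $\qvar$-irreducible (or more precisely, every relevant) algebra in $\Qvar{\Alg{A}}$ is weakly $\Qvar{\Alg{A}}$-projective; the connection between primitivity and weak projectivity is exactly what is invoked in Olson--Raftery--van Alten and in Gorbunov's book. The key point is: if $\Qvar{\Alg{A}} = \qvar \cap \vvar$, then $\Qvar{\Alg{A}}$ is closed under $\CHom$ relative to $\qvar$, hence under $\CHom$ within itself (since $\Qvar{\Alg{A}} \subseteq \qvar$), hence $\Qvar{\Alg{A}}$ is actually a variety; but a finitely generated quasivariety that is a variety has all its finite members weakly projective in it --- in particular $\Alg{A}$ itself is weakly $\Qvar{\Alg{A}}$-projective, contradicting total non-projectivity of $\Alg{A}$.

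The steps, in order: (1) assume $\qvar$ is primitive and let $\Alg{A} \in \qvar$ be totally non-projective; (2) form $\qvar' \bydef \Qvar{\Alg{A}}$, a subquasivariety of $\qvar$, so by primitivity $\qvar' = \qvar \cap \vvar$ for some variety $\vvar$; (3) observe $\qvar'$ is then closed under homomorphic images taken inside $\qvar$, and since every homomorphic image of a member of $\qvar' = \Qvar{\Alg{A}}$ again lies in $\qvar$ (because $\Qvar{\Alg A}\subseteq\qvar$ and $\qvar$ is closed under the quasivariety operators, while the relevant homomorphic image lands in $\vvar$ as $\vvar$ is a variety), conclude $\qvar'$ is closed under $\CHom$, i.e. $\qvar'$ is a variety; (4) invoke the elementary fact that in a finitely generated quasivariety which happens to be a variety, the generating algebra $\Alg{A}$ is weakly projective in $\qvar'$ (its homomorphic preimages from $\qvar'$ all contain a copy of $\Alg{A}$ --- this is where one uses that $\qvar' = \CHom\CSub\CProd(\Alg A)$ collapses appropriately, or more directly that $\Alg A$ is projective in the variety it generates when that variety equals $\Qvar{\Alg A}$); (5) this contradicts total non-projectivity of $\Alg{A}$, so $\qvar$ cannot be primitive.

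The main obstacle is step (4): making precise and rigorous the assertion that $\Alg{A}$ is weakly $\qvar'$-projective once $\qvar' = \Qvar{\Alg A}$ is known to be a variety. One cannot simply say $\Alg A$ is projective; rather, one must argue that if $\Alg B \in \qvar'$ maps homomorphically onto $\Alg A$, then $\Alg A$ embeds into $\Alg B$. Since $\qvar'$ is a finitely generated variety generated by the single finite algebra $\Alg A$, every finite subdirectly irreducible in $\qvar'$ is a homomorphic image of a subalgebra of $\Alg A$ (Jónsson's lemma, as $\Alg A$ is finite and the variety is congruence-distributive in the Heyting setting), and one uses this structure together with the fact that $\qvar'$ being a variety means it is closed under $\CHom$ to run the weak-projectivity argument --- essentially rerunning the very reasoning from the $\mathcal C_7'$ example in reverse. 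I would present this carefully, citing \cite{GorbunovBookE} for the equivalence between primitivity of $\qvar$ and weak $\qvar$-projectivity of all finite $\qvar$-irreducible algebras in each subquasivariety, which lets one bypass step (4) entirely: total non-projectivity of $\Alg A$ says $\Alg A$ is not weakly $\Qvar{\Alg A}$-projective, and $\Alg A$ is certainly $\Qvar{\Alg A}$-irreducible after passing to a suitable subdirectly irreducible factor, contradicting the criterion for primitivity applied to the subquasivariety $\Qvar{\Alg A} \subseteq \qvar$.
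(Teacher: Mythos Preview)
The paper gives no proof of this proposition; it is stated immediately after the remark ``It is easy to see that a totally non-projective algebra is not weakly $\qvar$-projective in any quasivariety $\qvar$ it belongs to'', and is evidently meant to be a one-line consequence: primitivity is inherited by subquasivarieties (if $\qvar''\subseteq\qvar'\subseteq\qvar$ and $\qvar''=\qvar\cap\vvar$ then $\qvar''=\qvar'\cap\vvar$), so one may replace $\qvar$ by the locally finite subquasivariety $\Qvar{\Alg{A}}$ and then appeal directly to Gorbunov's criterion (Proposition~\ref{pr-prim}). In every application in the paper the algebra $\Alg{A}$ (e.g.\ $\Alg{C}_7'$, or $\Alg{C}_{10}'$ in the proof of Theorem~\ref{th-rnpr}) is subdirectly irreducible, hence $\Qvar{\Alg{A}}$-irreducible, and the criterion applies verbatim.

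Your argument, by contrast, contains two genuine gaps. In step~(3) you pass from ``$\qvar'=\Qvar{\Alg{A}}$ is closed under homomorphic images \emph{taken inside} $\qvar$'' to ``$\qvar'$ is closed under $\CHom$, i.e.\ $\qvar'$ is a variety'', justifying this by ``$\qvar$ is closed under the quasivariety operators''. But $\CHom$ is \emph{not} a quasivariety operator; a quasivariety need not contain homomorphic images of its members, so you cannot conclude that every homomorphic image of an algebra in $\qvar'$ lies in $\qvar$. Thus $\qvar'$ is in general only a relative subvariety of $\qvar$, not a variety outright, and step~(4) never gets off the ground.

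Your ``bypass'' at the end fares no better: you write that ``$\Alg{A}$ is certainly $\Qvar{\Alg{A}}$-irreducible after passing to a suitable subdirectly irreducible factor'', but a subdirect factor of $\Alg{A}$ is a \emph{different} algebra, and total non-projectivity of $\Alg{A}$ says nothing about its factors. The witness $\Alg{B}$ fails to contain a copy of $\Alg{A}$; it may perfectly well contain copies of every irreducible factor of $\Alg{A}$. So the contradiction with Proposition~\ref{pr-prim} does not follow. The clean route is to note from the outset that $\Alg{A}$ itself is $\Qvar{\Alg{A}}$-irreducible (as it is in every case the paper uses), after which the hereditary-primitivity observation plus Proposition~\ref{pr-prim} finishes the argument in one line.
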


\begin{cor} \label{cor-oddcycl} A quasivariety generated by a cyclic Heyting algebra $\Alg{C}_{2m+1}$ of cardinality $2m+1$ is not primitive for any $m \ge 5$. 
\end{cor}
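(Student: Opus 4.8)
The plan is to derive Corollary~\ref{cor-oddcycl} from the preceding proposition (any quasivariety containing a totally non-projective algebra is not primitive) together with Proposition~\ref{pr_HScplPrim}, by exhibiting a totally non-projective algebra inside $\Qvar{\Alg{C}_{2m+1}}$. The cyclic Heyting algebra $\Alg{C}_{2m+1}$ is the one generated by a single element; for $m \ge 5$ its frame contains, as a generated subframe, a configuration analogous to the ones in Figure~\ref{fig-totnonpr}. So first I would make precise the description of $\Alg{C}_{2m+1}$ and identify inside it (or inside its subalgebra lattice) a finite algebra $\Alg{A}$ of the same shape as $\Alg{C}_7'$: one that has a proper subalgebra $\Alg{B}$ (playing the role of $\Alg{C}_5'$), such that the subdirect product of $\Alg{A}$ and $\Alg{B}$ (the analogue of $\Alg{C}_{10}'$) lies in $\Qvar{\Alg{A}}$ and has $\Alg{A}$ as a homomorphic image without $\Alg{A}$ embedding into it.

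Concretely, the key steps in order would be: (1) recall that $\Alg{C}_{2m+1}$, being cyclic, contains every cyclic Heyting algebra of strictly smaller odd cardinality as a subalgebra, so in particular $\Alg{C}_7'$-type and $\Alg{C}_5'$-type algebras appear as subalgebras once $2m+1 \ge 7$, i.e. for $m \ge 3$; (2) observe that the totally non-projective witness needs the extra room afforded by $m \ge 5$, because the specific homomorphic-preimage obstruction (the $\Alg{C}_{10}'$ witnessing non-projectivity of $\Alg{C}_7'$) must itself be realizable as a subdirect product of algebras already in $\Qvar{\Alg{C}_{2m+1}}$; (3) invoke the Example preceding the corollary, which establishes that $\Alg{C}_7'$ is totally non-projective, and note that total non-projectivity is intrinsic to the algebra and hence persists in any quasivariety containing it; (4) conclude via the proposition that $\Qvar{\Alg{C}_{2m+1}} \supseteq \Qvar{\Alg{C}_7'}$ is not primitive, and hence by Proposition~\ref{pr_HScplPrim} the corresponding deductive system is not hereditarily structurally complete.

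The main obstacle I expect is step (1)--(2): pinning down exactly which cyclic algebras $\Alg{C}_{2m+1}$ contain a totally non-projective subalgebra, and verifying the cardinality threshold $m \ge 5$ rather than $m \ge 3$. This requires an honest look at the structure of the free cyclic Heyting algebra (the Rieger--Nishimura-type construction restricted to the finite cyclic quotients), identifying where the three-point "fork" configurations with their reducts sit, and checking that for $m = 3, 4$ no such totally non-projective subalgebra is forced while for $m \ge 5$ one is. A cleaner route, if the bookkeeping gets heavy, is to argue uniformly: show that for $m \ge 5$ the algebra $\Alg{C}_{2m+1}$ literally has $\Alg{C}_7'$ (or a totally non-projective algebra of that kind) as a subalgebra, using the fact that the frame of $\Alg{C}_{2m+1}$ contains the relevant finite poset as a generated subframe, and then the totally non-projective subalgebra lies in $\Qvar{\Alg{C}_{2m+1}}$ automatically. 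Everything else is a direct application of the two cited results.
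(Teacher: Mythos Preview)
Your ``cleaner route'' at the end is exactly what the paper does, in one line: for $m \ge 5$ the cyclic algebra $\Alg{C}_{2m+1}$ contains (an isomorphic copy of) $\Alg{C}_7'$ as a subalgebra, and since $\Alg{C}_7'$ is totally non-projective the preceding proposition finishes the proof. So the approach is the same.

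Two places where your write-up detours unnecessarily. First, step~(2) is superfluous and you already say why in step~(3): total non-projectivity is a property of $\Alg{C}_7'$ relative to $\Qvar{\Alg{C}_7'}$, so once $\Alg{C}_7' \in \Qvar{\Alg{C}_{2m+1}}$ you are done; you never need to locate $\Alg{C}_{10}'$ inside $\Qvar{\Alg{C}_{2m+1}}$ separately. Second, your step~(1) conflates $\Alg{C}_7'$ with the cyclic algebra $\Alg{C}_7$: the primed algebras are \emph{not} the standard cyclic (Rieger--Nishimura) quotients, so the sentence ``contains every cyclic Heyting algebra of strictly smaller odd cardinality'' does not by itself put $\Alg{C}_7'$ inside $\Alg{C}_{2m+1}$, and this is precisely why your threshold bookkeeping went astray. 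The correct count is that $\Alg{C}_7'$ first appears as a subalgebra of $\Alg{C}_{2m+1}$ when $2m+1 \ge 11$, i.e.\ $m \ge 5$ (consistently with Theorem~\ref{th-rnpr}, where $\Qvar{\Alg{C}_9}$ is still primitive); there is nothing more to check.
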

\begin{proof} The proof immediately follows from the observation that for every $m \ge 5$ cyclic algebra $\Alg{C}_{2m+5}$ contains a subalgebra (isomorphic to) $\Alg{C}_7'$, which is totally non-projective.
\end{proof}

Let $\var{Q}$ be a quasivariety. Recall that $\var{Q}$ is called \textit{locally finite} if every finitely-generated algebra from $\var{Q}$ is finite. 

We will use the following criterion of primitiveness of locally finite quasivarieties from  \cite[Proposition 5.1.24]{GorbunovBookE}.

\begin{prop} \label{pr-prim} A locally finite quasivariety $\var{Q}$ is primitive if and only if every finite subdirectly $\var{Q}$-irreducible algebra is weakly projective in $\var{Q}$.
\end{prop}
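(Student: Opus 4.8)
The plan is to prove the two implications separately, in each case arguing directly from the definition of primitivity together with two elementary observations about the locally finite setting. First, since $\qvar$ is locally finite, so is every subquasivariety of $\qvar$, and an algebra belongs to a subquasivariety exactly when each of its finitely generated --- hence finite --- subalgebras does; this lets me reduce every membership question to finite algebras. Second, for a finite $\Alg{A}\in\qvar$ the property of being $\qvar$-irreducible coincides with being subdirectly $\qvar$-irreducible: a finite algebra fails to be subdirectly $\qvar$-irreducible precisely when the intersection of its non-trivial $\qvar$-congruences is the diagonal, i.e.\ precisely when it is a subdirect product of proper (hence strictly smaller) $\qvar$-quotients of itself.

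For sufficiency, assume every finite subdirectly $\qvar$-irreducible algebra is weakly $\qvar$-projective, let $\qvar'\subseteq\qvar$ be any subquasivariety, and set $\vvar\bydef\Var{\qvar'}$. Since $\qvar'\subseteq\qvar\cap\vvar$ always, I need only $\qvar\cap\vvar\subseteq\qvar'$, and by the reduction above it suffices to put every finite $\Alg{A}\in\qvar\cap\vvar$ into $\qvar'$, which I do by induction on $|\Alg{A}|$. If $\Alg{A}$ is not $\qvar$-irreducible, it is a subdirect product of strictly smaller members of $\qvar$; these are homomorphic images of $\Alg{A}$, hence also lie in $\vvar$, so by the induction hypothesis they lie in $\qvar'$, and therefore $\Alg{A}\in\CSub\CProd(\qvar')\subseteq\qvar'$. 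If $\Alg{A}$ is $\qvar$-irreducible, then from $\Alg{A}\in\vvar=\CHom\CSub\CProd(\qvar')$ I obtain algebras $\Alg{B}_j\in\qvar'$, a subalgebra $\Alg{C}$ of $\prod_j\Alg{B}_j$, and a surjection $\Alg{C}\twoheadrightarrow\Alg{A}$; lifting a finite generating set of $\Alg{A}$ to $\Alg{C}$ produces a finitely generated, hence (by local finiteness) finite, subalgebra $\Alg{C}_0\leq\Alg{C}$ which still maps onto $\Alg{A}$ and satisfies $\Alg{C}_0\in\qvar'$. As $\Alg{A}$ is a finite subdirectly $\qvar$-irreducible algebra in $\qvar$, it is weakly $\qvar$-projective, so it embeds into its homomorphic preimage $\Alg{C}_0\in\qvar'$, whence $\Alg{A}\in\qvar'$. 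This gives $\qvar'=\qvar\cap\vvar$, so $\qvar$ is primitive.

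For necessity, assume $\qvar$ is primitive, let $\Alg{A}\in\qvar$ be finite and subdirectly $\qvar$-irreducible, and suppose $\Alg{B}\in\qvar$ maps onto $\Alg{A}$; I must embed $\Alg{A}$ into $\Alg{B}$. Put $\qvar'\bydef\Qvar{\Alg{B}}$; since $\Var{\qvar'}=\Var{\Alg{B}}$, primitivity gives $\qvar'=\qvar\cap\Var{\Alg{B}}$. Now $\Alg{A}$, being a homomorphic image of $\Alg{B}$, lies in $\Var{\Alg{B}}$, and $\Alg{A}\in\qvar$, so $\Alg{A}\in\qvar'=\Qvar{\Alg{B}}$; thus $\Alg{A}$ embeds subdirectly into a product of ultrapowers of $\Alg{B}$. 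Because each projection has a $\qvar$-congruence as kernel (ultrapowers of $\Alg{B}$ lie in $\qvar$) and $\Alg{A}$ is subdirectly $\qvar$-irreducible, one projection is injective, so $\Alg{A}$ embeds into a single ultrapower of $\Alg{B}$; and since $\Alg{A}$ is finite, the existence of such an embedding is expressible by a first-order sentence, which therefore holds already in $\Alg{B}$ by the theorem of {\L}o{\'s}. Hence $\Alg{A}\hookrightarrow\Alg{B}$, and $\Alg{A}$ is weakly $\qvar$-projective.

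The step I expect to be the real obstacle is the passage, in the sufficiency argument, from membership of a finite algebra in the \emph{variety} generated by $\qvar'$ to an honest surjection onto it from a \emph{finite} member of $\qvar'$; this is precisely where local finiteness of $\qvar$ (and hence of its subquasivarieties) is used in an essential way. The other ingredients --- the induction on size via subdirect decomposition into $\qvar$-irreducibles, and the collapse of the ultrapower to $\Alg{B}$ via {\L}o{\'s}'s theorem --- are routine.
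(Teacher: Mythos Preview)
Your argument is correct. Note, however, that the paper does not supply its own proof of this proposition: it is quoted verbatim as \cite[Proposition~5.1.24]{GorbunovBookE} and used as a black box, so there is no in-paper proof to compare against. What you have written is essentially the standard proof one finds in Gorbunov---the sufficiency direction via induction on cardinality, splitting into the $\qvar$-reducible case (handled by the induction hypothesis applied to the smaller subdirect factors, which lie in $\qvar\cap\vvar$) and the $\qvar$-irreducible case (handled by weak projectivity against a finite preimage extracted via local finiteness); and the necessity direction via $\Alg{A}\in\Qvar{\Alg{B}}=\qvar\cap\Var{\Alg{B}}$, subdirect $\qvar$-irreducibility collapsing the product to a single ultrapower, and {\L}o{\'s}'s theorem collapsing the ultrapower to $\Alg{B}$. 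Your identification of the key use of local finiteness---producing a \emph{finite} member of $\qvar'$ surjecting onto $\Alg{A}$---is exactly right, and the equivalence you note between the paper's ``$\qvar$-irreducible'' and ``subdirectly $\qvar$-irreducible'' for finite non-trivial algebras is the small translation step needed to match the paper's terminology.
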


Thus, due to Propositions \ref{pr_HScplPrim} and \ref{pr-prim}, in order to prove that a deductive system is not hereditarily structurally complete, it is enough to show that $\var{Q}(\DS)$ contains a $\qvar$-irreducible algebra that is not weakly $\qvar$-projective, provided that $\var{Q}(\DS)$ is locally finite. In the following sections we use this approach to establish that there are continuum many deductive system that are not hereditarily structurally complete.


\textbf{Quasivarieties generated by finite cyclic algebras.}
First, let us consider the infinite cyclic Heyting algebra $\Alg{RN}$ - the Rigier-Nishimura ladder. Let us observe that $\Alg{RN}$ is a subalgebra of $\FAlg{\Heyt}{\omega}$. Since $\Heyt$, regarded as quasivariety, corresponds to $\IPC$ and we know that $\SC{\IPC}$ is $\HSCpl$, we can conclude that $\Qvar{\FAlg{\Heyt}{\omega}}$ is a primitive quasivariety, hence, its subquasivariety generated by $\Alg{RN}$ is also primitive. In other words, 

\begin{prop} \label{pr-RN} The structural completion of the logic of $\Alg{RN}$ is $\HSCpl$.
\end{prop}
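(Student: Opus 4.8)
The plan is to exhibit $\Alg{RN}$ as a subalgebra of the free Heyting algebra $\FAlg{\Heyt}{\omega}$ on countably many generators, and then deduce primitivity of $\Qvar{\Alg{RN}}$ from the already-established primitivity of $\Qvar{\FAlg{\Heyt}{\omega}}$. The starting point is Theorem~\ref{th-SCIPC}, which says $\SC{\IPC}$ is $\HSCpl$; by Proposition~\ref{pr_HScplPrim} this is equivalent to $\var{Q}(\SC{\IPC})$ being primitive, and by the displayed identity $\Qvar{\SCDS} = \Qvar{\FAlg{\qvar(\DS)}{\omega}}$ applied to $\DS = \IPC$ (so $\qvar(\DS) = \Heyt$), that quasivariety is exactly $\Qvar{\FAlg{\Heyt}{\omega}}$. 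Hence $\Qvar{\FAlg{\Heyt}{\omega}}$ is primitive.

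Next I would use the hereditary nature of primitivity: every subquasivariety of a primitive quasivariety is again primitive. This is essentially immediate from the definition — if $\qvar$ is primitive and $\qvar'' \subseteq \qvar' \subseteq \qvar$, then $\qvar'' = \qvar \cap \vvar$ for some variety $\vvar$, and intersecting with $\qvar'$ gives $\qvar'' = \qvar' \cap \vvar$, so $\qvar''$ is a relative variety of $\qvar'$. Thus it suffices to realize $\Qvar{\Alg{RN}}$ as a subquasivariety of $\Qvar{\FAlg{\Heyt}{\omega}}$, and for that it is enough to show $\Alg{RN} \in \Qvar{\FAlg{\Heyt}{\omega}}$, i.e. that $\Alg{RN}$ embeds into (a power of, or directly into) $\FAlg{\Heyt}{\omega}$. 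The cleanest route is the classical fact that the Rieger--Nishimura ladder $\Alg{RN}$ is precisely the one-generated free Heyting algebra $\FAlg{\Heyt}{1}$, which is a subalgebra of $\FAlg{\Heyt}{\omega}$ via the inclusion of generating sets; a free algebra on a subset of the generators embeds into the free algebra on the larger set. Therefore $\Alg{RN} \in \Qvar{\FAlg{\Heyt}{\omega}}$, so $\Qvar{\Alg{RN}} \subseteq \Qvar{\FAlg{\Heyt}{\omega}}$, which by the preceding paragraph is primitive.

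Finally, translating back: $\Qvar{\Alg{RN}}$ being primitive means, via Proposition~\ref{pr_HScplPrim} again, that any deductive system $\DS$ with $\var{Q}(\DS) = \Qvar{\Alg{RN}}$ is hereditarily structurally complete; and since the logic of $\Alg{RN}$ is defined by such a deductive system and its structural completion has the same quasivariety (here $\Qvar{\Alg{RN}}$ is already of the form $\Qvar{\FAlg{\qvar}{\omega}}$ since $\Alg{RN}$ is itself a free algebra, so it equals its own structural completion as a quasivariety), the structural completion of the logic of $\Alg{RN}$ is $\HSCpl$. I expect the only point requiring care is the identification $\Alg{RN} \cong \FAlg{\Heyt}{1}$ and the observation that it sits inside $\FAlg{\Heyt}{\omega}$ as a subalgebra — this is standard (the Rieger--Nishimura lattice is the classical description of the free one-generated Heyting algebra), but it is the load-bearing structural fact; everything else is a formal consequence of results already in the excerpt.
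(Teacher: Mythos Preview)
Your proposal is correct and follows essentially the same route as the paper's own argument (the paragraph immediately preceding the proposition): embed $\Alg{RN}$ into $\FAlg{\Heyt}{\omega}$, invoke Theorem~\ref{th-SCIPC} together with Proposition~\ref{pr_HScplPrim} to get that $\Qvar{\FAlg{\Heyt}{\omega}}$ is primitive, and conclude that the subquasivariety $\Qvar{\Alg{RN}}$ is primitive. You supply more detail than the paper does---the explicit check that primitivity passes to subquasivarieties and the identification $\Alg{RN}\cong\FAlg{\Heyt}{1}$---and your final translation step (that $\Qvar{\Alg{RN}}$ really is the quasivariety of the structural completion) is slightly loosely phrased but is exactly what the paper's Proposition~\ref{pr-fingen} makes precise.
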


Before we turn to the quasivarieties generated by finite cyclic algebras, let us prove the following simple proposition which will be instrumental in what follows.  

\begin{prop} (comp.\cite[Lemma 4.1.10]{Rybakov_Book}) \label{pr-fingen} Let $\Alg{A}$ be an $n$-generated algebra and $\vvar$ be a variety generated by $\Alg{A}$. Then,
\[
\Qvar{\FAlg{\vvar}{\omega}} = \Qvar{\FAlg{\vvar}{n}}.
\] 
\end{prop}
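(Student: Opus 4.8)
\textbf{Proof proposal for Proposition (comp.~\cite[Lemma 4.1.10]{Rybakov_Book}).}

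The plan is to establish the two inclusions $\Qvar{\FAlg{\vvar}{\omega}} \subseteq \Qvar{\FAlg{\vvar}{n}}$ and $\Qvar{\FAlg{\vvar}{n}} \subseteq \Qvar{\FAlg{\vvar}{\omega}}$ separately. The second inclusion is immediate: $\FAlg{\vvar}{n}$ is an $n$-generated free algebra of $\vvar$, hence a homomorphic image of $\FAlg{\vvar}{\omega}$ (equivalently, a retract of it via the obvious inclusion of generators), so $\FAlg{\vvar}{n} \in \CHom(\FAlg{\vvar}{\omega}) \subseteq \Qvar{\FAlg{\vvar}{\omega}}$, and therefore the quasivariety it generates is contained in $\Qvar{\FAlg{\vvar}{\omega}}$. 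The first inclusion is the substantive direction, and the key point is that because $\vvar$ is generated by the $n$-generated algebra $\Alg{A}$, the variety $\vvar$ is itself generated by any one of its $n$-generated free algebras: more precisely, $\vvar = \Var{\Alg{A}} = \Var{\FAlg{\vvar}{n}}$, since $\Alg{A}$, being $n$-generated, is a homomorphic image of $\FAlg{\vvar}{n}$, whence $\Var{\Alg{A}} \subseteq \Var{\FAlg{\vvar}{n}} \subseteq \vvar = \Var{\Alg{A}}$.

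First I would use this to show that $\FAlg{\vvar}{\omega}$ embeds into a direct power of $\FAlg{\vvar}{n}$. Since $\FAlg{\vvar}{n}$ generates $\vvar$ as a variety, every free algebra of $\vvar$ — in particular $\FAlg{\vvar}{\omega}$ — belongs to $\CHom\CSub\CProd(\FAlg{\vvar}{n})$ by Birkhoff's theorem; but being free (hence projective, hence a subdirect product of its subdirectly irreducible quotients which already lie in $\CSub\CProd(\FAlg{\vvar}{n})$... ), a cleaner route is this: any algebra in the variety generated by $\Alg{B} \bydef \FAlg{\vvar}{n}$ is a quotient of a subalgebra of a power of $\Alg{B}$, and a \emph{free} algebra of that variety, being projective with respect to surjections, is in fact isomorphic to a \emph{subalgebra} of a power of $\Alg{B}$ — indeed the canonical map $\FAlg{\vvar}{\omega} \to \Alg{B}^{I}$, with $I$ the set of all homomorphisms $\FAlg{\vvar}{\omega} \to \Alg{B}$, is injective precisely because $\Alg{B}$ generates $\vvar$ (two distinct elements of the free algebra are separated by some valuation in $\Alg{B}$). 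Hence $\FAlg{\vvar}{\omega} \in \CSub\CProd(\FAlg{\vvar}{n}) \subseteq \Qvar{\FAlg{\vvar}{n}}$, and consequently $\Qvar{\FAlg{\vvar}{\omega}} \subseteq \Qvar{\FAlg{\vvar}{n}}$. Combining the two inclusions gives the claimed equality.

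The main obstacle is the separation claim in the previous paragraph: one must argue that the diagonal-type map $\FAlg{\vvar}{\omega} \to \prod_{h \in \operatorname{Hom}(\FAlg{\vvar}{\omega},\,\FAlg{\vvar}{n})} \FAlg{\vvar}{n}$ is injective. This follows because $\FAlg{\vvar}{n}$ generates $\vvar$: if $a \neq b$ in $\FAlg{\vvar}{\omega}$, the corresponding terms $s,t$ are not identified by the equational theory of $\vvar$, hence not by that of $\FAlg{\vvar}{n}$, so there is a homomorphism from $\FAlg{\vvar}{\omega}$ to $\FAlg{\vvar}{n}$ (a valuation of the variables) on which they differ. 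Everything else is standard closure-operator bookkeeping ($\CHom$, $\CSub$, $\CProd$ vs.\ $\CHom$, $\CSub$, $\CProd$ for quasivarieties), and no local finiteness or finiteness of $\Alg{A}$ beyond $n$-generation is actually needed, so the statement goes through exactly as phrased.
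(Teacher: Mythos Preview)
Your overall strategy is sound, but there is one genuine slip in the easy inclusion: you write $\FAlg{\vvar}{n} \in \CHom(\FAlg{\vvar}{\omega}) \subseteq \Qvar{\FAlg{\vvar}{\omega}}$, and the second inclusion is false in general --- quasivarieties are closed under $\CSub$ and $\CProd$ (and ultraproducts), not under $\CHom$. Your own parenthetical remark already repairs this: a retract is in particular a subalgebra, so the correct line is $\FAlg{\vvar}{n} \in \CSub(\FAlg{\vvar}{\omega}) \subseteq \Qvar{\FAlg{\vvar}{\omega}}$. This is exactly how the paper handles that direction: it simply notes that $\FAlg{\vvar}{n}$ is a subalgebra of $\FAlg{\vvar}{\omega}$.

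For the substantive inclusion $\Qvar{\FAlg{\vvar}{\omega}} \subseteq \Qvar{\FAlg{\vvar}{n}}$ you take a somewhat different route from the paper. Both arguments begin with the same key observation, namely that $\FAlg{\vvar}{n}$ generates $\vvar$ as a variety (because the $n$-generated $\Alg{A}$ is a quotient of it). From there you give a concrete embedding: the diagonal map $\FAlg{\vvar}{\omega} \to \FAlg{\vvar}{n}^{I}$, with $I$ the set of homomorphisms into $\FAlg{\vvar}{n}$, is injective precisely because $\FAlg{\vvar}{n}$ separates non-equivalent terms, whence $\FAlg{\vvar}{\omega} \in \CSub\CProd(\FAlg{\vvar}{n})$. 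The paper instead appeals to the abstract fact that any subquasivariety $\qvar \subseteq \vvar$ with $\Var{\qvar} = \vvar$ contains the free algebras of $\vvar$ (since a quasivariety and the variety it generates share their free algebras), and applies this with $\qvar = \Qvar{\FAlg{\vvar}{n}}$. Your argument is essentially the explicit unpacking of that abstract fact and is a bit more self-contained; the paper's version is shorter once the free-algebra characterization of structural completions is taken for granted.
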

\begin{proof} Due to $\FAlg{\vvar}{n}$ is a subalgebra of $\FAlg{\vvar}{\omega}$, we have $\Qvar{\FAlg{\vvar}{n}} \subseteq \Qvar{\FAlg{\vvar}{\omega}}$. Also, due to every quasivariety contains free algebras, $\Qvar{\FAlg{\vvar}{\omega}}$ is the least (relative $\subseteq$) subquasivariety of $\vvar$ that generates $\vvar$. Hence, we only need to verify that $\FAlg{\vvar}{n}$ generates $\vvar$. Since $\FAlg{\vvar}{n} \in \vvar$, all identities valid in $\vvar$ are valid in $\FAlg{\vvar}{n}$, and we need to show that if an identity $\tau$ is refuted in $\vvar$, then $\tau$ is refuted in $\FAlg{\vvar}{n}$ too.

Indeed, suppose $\tau$ is an identity refuted in $\vvar$. Then, due to $\Alg{A}$ generates $\vvar$, this identity is refuted in $\Alg{A}$. Recall that $\Alg{A}$ is $n$-generated and, hence, $\Alg{A}$ is a homomorphic image of $\FAlg{\vvar}{n}$. Hence, $\tau$ cannot be valid in $\FAlg{\vvar}{n}$. 
\end{proof}

Now, let us turn to the quasivarieties generated by finite cyclic algebras. First, we recall that if $\Alg{A}$ is a finite algebra, then quasivariety $\Qvar{\Alg{A}}$ is locally finite, and any  non-trivial finite $\Qvar{\Alg{A}}$-irreducible algebra is embedded in $\Alg{A}$ (see e.g. \cite[Proposition 3.1.6]{GorbunovBookE}). Secondly, by Proposition \ref{pr-fingen}, if $\Alg{A}$ is cyclic, that is, $\Alg{A}$ is generated by a single element, 
\[
\Qvar{\FAlg{\Qvar{\Alg{A}}}{\omega}} = \Qvar{\FAlg{\Qvar{\Alg{A}}}{1}}. 
\]
Thus, if $\Alg{A}$ is cyclic, in order to establish that $\Qvar{\FAlg{\Qvar{\Alg{A}}}{\omega}}$ is primitive it is necessary and sufficient to verify that every $\Qvar{\FAlg{\Qvar{\Alg{A}}}{1}}$-irreducible subalgebra of $\FAlg{\Qvar{\Alg{A}}}{1}$ is weakly $\Qvar{\FAlg{\Qvar{\Alg{A}}}{1}}$-projective.

Let $\Alg{C}_n, n >1 $ denotes a cyclic Heyting algebra having $n$ elements. Then the following holds.

\begin{theorem} \label{th-rnpr} Quasivariety $\Qvar{\Alg{C}_n}$ is primitive if and only if $n = 2,3,4,5,$ $6,8,9,10,12,14$.
\end{theorem}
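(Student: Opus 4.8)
The plan is to recast primitiveness of $\Qvar{\Alg{C}_n}$ as a finite combinatorial condition on the subalgebras of $\Alg{C}_n$ and then settle the two implications separately. Since $\Alg{C}_n$ is finite, $\Qvar{\Alg{C}_n}=\CSub\CProd(\Alg{C}_n)$ is locally finite, so Proposition~\ref{pr-prim} applies: $\Qvar{\Alg{C}_n}$ is primitive if and only if every finite subdirectly $\Qvar{\Alg{C}_n}$-irreducible algebra is weakly $\Qvar{\Alg{C}_n}$-projective. Using the recalled fact that every finite subdirectly $\Qvar{\Alg{C}_n}$-irreducible algebra embeds into $\Alg{C}_n$, that every finite member of $\CSub\CProd(\Alg{C}_n)$ is a subdirect product of subalgebras of $\Alg{C}_n$, and that by local finiteness only finite preimages need be tested, the criterion becomes: for every subdirectly irreducible subalgebra $\Alg{B}\le\Alg{C}_n$ and every finite subdirect product $\Alg{D}$ of subalgebras of $\Alg{C}_n$ mapping onto $\Alg{B}$, there is an embedding $\Alg{B}\hookrightarrow\Alg{D}$. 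Both $\Alg{B}$ and the relevant $\Alg{D}$ now range over explicit finite lists, obtained by reading off the finite $1$-generated Heyting algebras lying below $\Alg{C}_n$ in the Rieger--Nishimura ladder $\Alg{RN}$; I would first write out these lists for $n=2,\dots,14$, bearing in mind that the subalgebra ordering on the $\Alg{C}_k$ is a forest rather than a chain, which is what makes the answer set irregular.

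For the ``only if'' direction one shows $\Qvar{\Alg{C}_n}$ fails to be primitive for $n\in\{7,11,13\}$ and for all $n\ge 15$. For odd $n\ge 11$ this is exactly Corollary~\ref{cor-oddcycl}; for even $n\ge 16$ the same ladder computation locates a copy of $\Alg{C}_7'$ inside $\Alg{C}_n$, so $\Qvar{\Alg{C}_n}$ contains a totally non-projective algebra and hence is not primitive by the Proposition preceding Corollary~\ref{cor-oddcycl}. The remaining value $n=7$ is the content of the Example accompanying Figure~\ref{fig-totnonpr}: $\Alg{C}_7'$ embeds into $\Alg{C}_7$, while the subdirect product $\Alg{C}_{10}'\le\Alg{C}_5'\times\Alg{C}_7'$ shows $\Alg{C}_7'$ is not weakly $\Qvar{\Alg{C}_7}$-projective. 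This half is routine given what has already been set up.

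The substance lies in the ``if'' direction: for each of the ten values $n\in\{2,3,4,5,6,8,9,10,12,14\}$ one must verify the combinatorial criterion, namely that every subdirectly irreducible subalgebra $\Alg{B}\le\Alg{C}_n$ re-embeds into each of its finite preimages in $\Qvar{\Alg{C}_n}$. For a fixed $\Alg{B}$ the approach is: given a surjection $h\colon\Alg{D}\twoheadrightarrow\Alg{B}$ with $\Alg{D}$ a finite subdirect product of subalgebras of $\Alg{C}_n$, lift a generating tuple of $\Alg{B}$ to suitable elements of $\Alg{D}$ and show, using the explicit structure of the small algebras $\Alg{C}_k$ (position of the generator, pseudocomplements, congruence lattices), that the subalgebra of $\Alg{D}$ they generate is isomorphic to $\Alg{B}$. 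The obstruction that must be excluded for the ten good values --- and that is present, causing failure, for $7,11,13$ and all $n\ge 15$ --- is precisely the $\Alg{C}_5'/\Alg{C}_7'/\Alg{C}_{10}'$ configuration: once it appears among the preimages, no injective lift exists. I expect this case analysis to be the main difficulty: the real work is to find a uniform reason why the lift always exists for the ten admissible $n$, rather than to check by hand each subdirectly irreducible subalgebra of each of the ten algebras against all of its preimages.
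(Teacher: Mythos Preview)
Your outline for the ``if'' direction and for odd $n\ge 11$ is in line with the paper. The gap is in your treatment of even $n\ge 16$: you assert that ``the same ladder computation locates a copy of $\Alg{C}_7'$ inside $\Alg{C}_n$'', but this is false. The paper's own argument rests on the \emph{opposite} fact, namely $\Alg{C}_7'\notin\qvar_{16}=\Qvar{\Alg{C}_{16}}$; since $\Alg{C}_7'$ is subdirectly irreducible this says precisely that $\Alg{C}_7'$ does not embed into $\Alg{C}_{16}$. Hence the ``totally non-projective algebra present'' shortcut that works for odd $n$ is unavailable in the even case, and your argument for $n\ge 16$ even does not go through.

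What the paper does instead is subtler and actually exploits the \emph{absence} of $\Alg{C}_7'$. The witness is $\Alg{C}_{10}'$, which does embed into $\Alg{C}_{16}$. In the ambient variety $\Alg{C}_{10}'$ decomposes subdirectly with $\Alg{C}_7'$ among its factors; because that factor is missing from $\qvar_{16}$, no such decomposition is available inside $\qvar_{16}$, and $\Alg{C}_{10}'$ becomes $\qvar_{16}$-irreducible. One then exhibits $\Alg{C}_{12}'\in\qvar_{16}$ as a homomorphic preimage of $\Alg{C}_{10}'$ into which $\Alg{C}_{10}'$ fails to embed, so $\Alg{C}_{10}'$ is not weakly $\qvar_{16}$-projective and $\qvar_{16}$ is not primitive. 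This is a \emph{relative} failure of projectivity, not a totally-non-projective one, and the same configuration $\Alg{C}_{10}',\Alg{C}_{12}'$ sits inside every $\Alg{C}_{2k}$ with $k\ge 8$. Your proposal misses exactly this mechanism.

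For the positive direction the paper dispatches $n\in\{2,3,4,5,6,8,9\}$ by invoking the criterion of \cite{Citkin_1978} directly rather than by a lifting argument, and leaves $n\in\{10,12,14\}$ to the enumeration-and-check you anticipate; your sketch is a reasonable way to organise that check, though no uniform reason is offered in the paper either.
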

\begin{proof} The primitiveness of $\Qvar{\Alg{C}_n}$ for $n = 2,3,4,5,6,8,9$ follows immediately from the criterion from \cite{Citkin_1978}. 
By Corollary \ref{cor-oddcycl}, $\Qvar{C_{2k+1}}$ for all $k \ge 5$ are not primitive. 

Let us prove that $\Qvar{C_{2k}}$ for all $k \ge 8$ are not primitive, and this will leave us only with cases $n =10,12,14$.

\begin{figure}[ht]
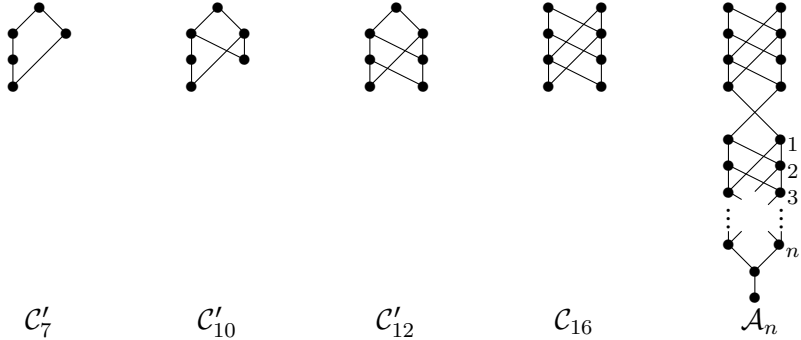

\[
\begin{array}{ccccccccccc}

\ctdiagram{
\ctnohead
\ctinnermid
\ctel 0,0,0,20:{}
\ctel 0,0,20,20:{}
\ctel 0,20,10,30:{}
\ctel 20,20,10,30:{}
\ctv 0,0:{\bullet}
\ctv 0,10:{\bullet}
\ctv 0,20:{\bullet}
\ctv 10,30:{\bullet}
\ctv 20,20:{\bullet}
}

& \quad \quad &
\ctdiagram{
\ctnohead
\ctinnermid
\ctel 0,0,0,20:{}
\ctel 20,10,20,20:{}
\ctel 0,20,10,30:{}
\ctel 20,20,10,30:{}
\ctel 0,20,20,10:{}
\ctel 0,0,20,20:{}
\ctv 0,0:{\bullet}
\ctv 0,10:{\bullet}
\ctv 0,20:{\bullet}
\ctv 10,30:{\bullet}
\ctv 20,20:{\bullet}
\ctv 20,10:{\bullet}
}

& \quad \quad &

\ctdiagram{
\ctnohead
\ctinnermid
\ctel 0,0,0,20:{}
\ctel 20,0,20,20:{}
\ctel 0,20,10,30:{}
\ctel 20,20,10,30:{}
\ctel 0,20,20,10:{}
\ctel 0,0,20,20:{}
\ctel 0,10,20,0:{}
\ctv 0,0:{\bullet}
\ctv 0,10:{\bullet}
\ctv 0,20:{\bullet}
\ctv 10,30:{\bullet}
\ctv 20,20:{\bullet}
\ctv 20,10:{\bullet}
\ctv 20,0:{\bullet}
}

& \quad \quad &


\ctdiagram{
\ctnohead
\ctinnermid
\ctel 0,0,0,30:{}
\ctel 20,0,20,30:{}
\ctel 0,20,20,10:{}
\ctel 0,0,20,20:{}
\ctel 0,10,20,0:{}
\ctel 0,30,20,20:{}
\ctel 0,10,20,30:{}
\ctv 0,0:{\bullet}
\ctv 0,10:{\bullet}
\ctv 0,20:{\bullet}
\ctv 0,30:{\bullet}
\ctv 20,30:{\bullet}
\ctv 20,20:{\bullet}
\ctv 20,10:{\bullet}
\ctv 20,0:{\bullet}
}
& \quad \quad &
\ctdiagram{
\ctnohead
\ctinnermid
\ctel 0,0,0,30:{}
\ctel 20,0,20,30:{}
\ctel 0,20,20,10:{}
\ctel 0,0,20,20:{}
\ctel 0,10,20,0:{}
\ctel 0,30,20,20:{}
\ctel 0,10,20,30:{}
\ctel 0,0,20,-20:{}
\ctel 20,0,0,-20:{}
\ctel 20,-40,20,-20:{}
\ctel 0,-40,0,-20:{}
\ctel 0,-20,20,-30:{}
\ctel 0,-30,20,-40:{}
\ctel 0,-40,5,-43:{}
\ctel 20,-20,0,-40:{}
\ctel 20,-30,10,-40:{}
\ctel 20,-40,15,-45:{}
\ctel 20,-60,10,-70:{}
\ctel 0,-60,10,-70:{}
\ctel 10,-80,10,-70:{}
\ctel 0,-60,0,-55:{}
\ctel 0,-60,5,-55:{}
\ctel 20,-60,20,-55:{}
\ctel 20,-60,15,-55:{}
\ctv 0,0:{\bullet}
\ctv 0,10:{\bullet}
\ctv 0,20:{\bullet}
\ctv 0,30:{\bullet}
\ctv 20,30:{\bullet}
\ctv 20,20:{\bullet}
\ctv 20,10:{\bullet}
\ctv 20,0:{\bullet}
\ctv 0,-20:{\bullet}
\ctv 22,-20:{\bullet_1}
\ctv 0,-30:{\bullet}
\ctv 22,-30:{\bullet_2}
\ctv 0,-40:{\bullet}
\ctv 22,-40:{\bullet_3}
\ctv 0,-50:{\vdots}
\ctv 20,-50:{\vdots}
\ctv 0,-60:{\bullet}
\ctv 22,-60:{\bullet_n}
\ctv 10,-70:{\bullet}
\ctv 10,-80:{\bullet}
}
\\
\mathcal{C}_7' && \mathcal{C}_{10}' && \mathcal{C}_{12}' && \mathcal{C}_{16} && \mathcal{A}_n 
\end{array}
\]
\caption{Frames for proof of non-primitiveness.} \label{fig_Q16}
\end{figure}

Indeed, consider algebras $\Alg{C}_7',\Alg{C}_{10}',\Alg{C}_{12}',\Alg{C}_{16}$ corresponding to frames depicted at Fig.\ref{fig_Q16} and let $\qvar_{16} \bydef \Qvar{\Alg{C}_{16}}$. Algebras $\Alg{C}_{10}',\Alg{C}_{12}'$ are subalgebras of $\Alg{C}_{16}$ and, hence, $\Alg{C}_{10}',\Alg{C}_{12}' \in \qvar_{16}$. Algebra $\Alg{C}_{10}'$ is $\qvar_{16}$-irreducible, for algebra $\Alg{C}_7' \notin \qvar_{16}$ and it is a subdirect factor of $\Alg{C}_{10}'$. And, $\Alg{C}_{10}'$ is a homomorphic image of $\Alg{C}_{12}'$, but it is not a subalgebra of $\Alg{C}_{12}'$. Hence, $\Alg{C}_{10}'$ is not weakly $\qvar_{16}$-projective and quasivariety $\qvar_{16}$ is not primitive.  

The above proof holds for any algebra $\Alg{C}_{2k}$ for any $k \ge 8$.

The cases $n =10,12,14$ can be checked by listing all the $\qvar$-irreducible subalgebras of respective algebras and verifying that all of them are weakly $\qvar$-projective.
\end{proof}

\begin{cor} \label{cor-vnonpr} If $\vvar$ is a variety such that cardinality of $\FAlg{\vvar}{1}$ is finite and exceeds 13, then $\Qvar{\FAlg{\vvar}{\omega}}$ is not primitive. 
\end{cor}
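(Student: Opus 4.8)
The plan is to pass from $\FAlg{\vvar}{\omega}$ down to its one-generated subalgebra and then invoke Theorem~\ref{th-rnpr}. I would first record the routine fact that a subquasivariety of a primitive quasivariety is again primitive: if $\qvar$ is primitive, $\qvar' \subseteq \qvar$, and $\qvar'' \subseteq \qvar'$ is a subquasivariety, then $\qvar'' = \qvar \cap \vvar$ for some variety $\vvar$, hence also $\qvar'' = \qvar' \cap \vvar$, a relative variety of $\qvar'$. Consequently, to prove that $\Qvar{\FAlg{\vvar}{\omega}}$ is not primitive it suffices to exhibit one non-primitive subquasivariety of it.

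Since the free algebra $\FAlg{\vvar}{1}$ embeds into $\FAlg{\vvar}{\omega}$ (as the subalgebra generated by one of the free generators), we have $\Qvar{\FAlg{\vvar}{1}} \subseteq \Qvar{\FAlg{\vvar}{\omega}}$, so it is enough to show that $\Alg{A} \bydef \FAlg{\vvar}{1}$ generates a non-primitive quasivariety. By hypothesis $\Alg{A}$ is a finite cyclic Heyting algebra with $n \bydef |\Alg{A}| \geq 14$; thus, in the notation of Theorem~\ref{th-rnpr}, $\Alg{A}$ is the cyclic algebra $\Alg{C}_n$, and that theorem tells us $\Qvar{\Alg{C}_n}$ is not primitive for every $n \geq 15$ (for odd $n$ by Corollary~\ref{cor-oddcycl}, since $\Alg{C}_n$ then contains a copy of the totally non-projective algebra $\Alg{C}_7'$; for even $n \geq 16$ exactly as $n = 16$ is handled in the proof of Theorem~\ref{th-rnpr}). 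So the only value requiring separate attention is $n = 14$.

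For $n = 14$ I would argue that $\FAlg{\vvar}{1}$ cannot have cardinality exactly $14$, i.e. that $\Alg{C}_{14}$ — the only cyclic Heyting algebra with more than $13$ elements generating a primitive quasivariety — is not itself a free one-generated algebra. Indeed, if $\FAlg{\vvar}{1} \cong \Alg{C}_{14}$, then $\Alg{C}_{14} \in \vvar$, so $\Var{\Alg{C}_{14}} \subseteq \vvar$ and hence $\Alg{C}_{14} = \FAlg{\vvar}{1}$ maps onto $\FAlg{\Var{\Alg{C}_{14}}}{1}$; on the other hand $\Alg{C}_{14}$ is one-generated and lies in $\Var{\Alg{C}_{14}}$, so $\FAlg{\Var{\Alg{C}_{14}}}{1}$ maps onto $\Alg{C}_{14}$. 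Being finite and mutual homomorphic images, the two are isomorphic, whence by Proposition~\ref{pr-fingen} $\Qvar{\FAlg{\Var{\Alg{C}_{14}}}{\omega}} = \Qvar{\FAlg{\Var{\Alg{C}_{14}}}{1}} = \Qvar{\Alg{C}_{14}}$ would be primitive and $\Var{\Alg{C}_{14}}$ would be a genuine exception to the corollary. So the corollary comes down to checking that $\FAlg{\Var{\Alg{C}_{14}}}{1}$ has strictly more than $14$ elements.

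That last point — computing (or at least bounding below) the free one-generated algebra of $\Var{\Alg{C}_{14}}$ and confirming it exceeds $14$ elements, so that the finite free one-generated Heyting algebras of cardinality $> 13$ are precisely the $\Alg{C}_n$ with $n \geq 15$ — is the step I expect to be the real obstacle; it is a concrete but somewhat delicate computation with finite cyclic quotients of the Rieger–Nishimura ladder. Everything else is bookkeeping with the embedding $\FAlg{\vvar}{1} \hookrightarrow \FAlg{\vvar}{\omega}$ and the classification already obtained in Theorem~\ref{th-rnpr}.
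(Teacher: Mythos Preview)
Your core strategy is exactly the paper's: embed $\FAlg{\vvar}{1}$ into $\FAlg{\vvar}{\omega}$, observe that primitivity descends to subquasivarieties, and invoke Theorem~\ref{th-rnpr} on the finite cyclic algebra $\FAlg{\vvar}{1}$. The paper's entire proof is two sentences doing precisely this, citing Theorem~\ref{th-rnpr} for all $n > 13$ without further comment.

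You have, however, noticed something the paper's own proof glosses over: Theorem~\ref{th-rnpr} lists $n = 14$ among the \emph{primitive} cases, so the blanket appeal to that theorem does not literally cover $n = 14$. The paper does not address this at all. Your proposed fix --- arguing that no variety can have a free one-generated algebra of exactly $14$ elements, by reducing to the computation of $\FAlg{\Var{\Alg{C}_{14}}}{1}$ --- is a sensible line, and your reduction (via the sandwich $\Alg{C}_{14} \twoheadrightarrow \FAlg{\Var{\Alg{C}_{14}}}{1} \twoheadrightarrow \Alg{C}_{14}$) is correct. But as you yourself acknowledge, you do not actually carry out that computation, so your argument remains open at exactly this point. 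In short: your approach coincides with the paper's, you are more scrupulous about the boundary case, but neither you nor the paper closes the $n = 14$ gap. (It is quite possible the intended bound is ``exceeds $14$'' rather than ``exceeds $13$'', which would make the two-line proof go through verbatim; the only application the paper makes, to $\BDn$ with $n > 3$, uses free algebras with strictly more than $14$ elements anyway.)
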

\begin{proof} Let $\vvar$ be a variety and $\FAlg{\vvar}{1}$ has $n$ elements and $n >13$. Then, by Theorem \ref{th-rnpr}, $\Qvar{\FAlg{\vvar}{1}}$ is not primitive. So, $\Qvar{\FAlg{\vvar}{\omega}}$ is not primitive, because $\Qvar{\FAlg{\vvar}{1}}$ is a subquasivariety of $\Qvar{\FAlg{\vvar}{\omega}}$. 
\end{proof}

In other words, the following holds.
\begin{cor} \label{cor-lnonpr} If $\LogL$ is a logic whose Lindenbaum algebra of formulas on one variable is finite and has at least 14 elements, then structural completion of $\LogL$ is not $\HSCpl$. In particular, structural completions of deductive systems $\ds{\BDn}{\MP}$ for all $ n > 3$ are not $\HSCpl$.
\end{cor}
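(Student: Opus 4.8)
The plan is to read the statement off the correspondence between structural completions and free‑algebra quasivarieties set up in this section, and then quote Corollary~\ref{cor-vnonpr} together with Proposition~\ref{pr_HScplPrim}. Let $\vvar$ be the variety $\Var{\LogL}$ of Heyting algebras corresponding to $\LogL$; since Modus Ponens is valid in every Heyting algebra, $\vvar$ is exactly the quasivariety associated with the deductive system $\ds{\LogL}{\MP}$. The Lindenbaum algebra of $\LogL$ on the single variable $p$ is, by construction, the subalgebra of the free algebra $\FAlg{\vvar}{\omega}$ generated by the class of $p$, that is, the free one‑generated algebra $\FAlg{\vvar}{1}$; hence the hypothesis says precisely that $\FAlg{\vvar}{1}$ is finite and has at least $14$ elements. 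By the identity $\Qvar{\SCDS}=\Qvar{\FAlg{\qvar(\DS)}{\omega}}$ recalled above, applied to $\DS=\ds{\LogL}{\MP}$, the structural completion $\SC{\LogL}$ corresponds to the quasivariety $\Qvar{\FAlg{\vvar}{\omega}}$. Corollary~\ref{cor-vnonpr} now gives that $\Qvar{\FAlg{\vvar}{\omega}}$ is not primitive, and Proposition~\ref{pr_HScplPrim} then yields that $\SC{\LogL}$ is not $\HSCpl$. So the first assertion is essentially immediate; the only point that even deserves a line of argument is the (routine) identification of the one‑variable Lindenbaum algebra with $\FAlg{\vvar}{1}$.

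For the clause about $\BDn$ I would argue as follows. Since $\BDn$ is locally finite, its one‑variable Lindenbaum algebra $\FAlg{\Var{\BDn}}{1}$ is finite, so by the first part it is enough to show that it has more than $13$ elements whenever $n>3$. For $n\ge 4$ the logic $\mathsf{BD}_{4}$ extends $\BDn$, so $\Var{\mathsf{BD}_{4}}\subseteq\Var{\BDn}$ and $\FAlg{\Var{\mathsf{BD}_{4}}}{1}$ is a homomorphic image of $\FAlg{\Var{\BDn}}{1}$; thus it suffices to see that $\FAlg{\Var{\mathsf{BD}_{4}}}{1}$ itself has more than $13$ elements. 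This is a finite computation with the one‑variable universal (Rieger--Nishimura) model: its restriction to depth $\le k$ is the dual frame of $\FAlg{\Var{\mathsf{BD}_{k}}}{1}$, this frame has $2k$ points (two per level, in the ladder pattern), and counting its up‑sets gives $4,8,12,16$ for $k=1,2,3,4$. In particular $\FAlg{\Var{\mathsf{BD}_{4}}}{1}$ has $16>13$ elements — it is the algebra $\Alg{C}_{16}$ whose frame $\mathcal{C}_{16}$ is depicted in Figure~\ref{fig_Q16} — so the first part applies to $\BDn$ for every $n>3$.

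I expect the general statement to cost nothing beyond unwinding definitions. The one place where a genuine, though bounded, computation enters is the claim that the depth‑$4$ truncation of the one‑variable Rieger--Nishimura model has more than $13$ up‑sets, i.e. $\FAlg{\Var{\mathsf{BD}_{4}}}{1}\cong\Alg{C}_{16}$: this is the only point at which the numerical threshold $13$ of Corollary~\ref{cor-vnonpr} meets the concrete structure of the logics $\BDn$, and it is also what makes $n=3$ the exact borderline, in agreement with Table~\ref{table1}.
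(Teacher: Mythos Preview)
Your proof is correct and follows the same route as the paper's: the general clause is just Corollary~\ref{cor-vnonpr} rephrased via the identification of the one-variable Lindenbaum algebra with $\FAlg{\vvar}{1}$, and the $\BDn$ clause is the observation that this free algebra is finite with more than $13$ elements once $n>3$. The paper simply cites the latter as a ``known fact''; you go further and actually compute the sizes $4,8,12,16$ of $\FAlg{\Var{\mathsf{BD}_k}}{1}$ for $k\le 4$ via the depth-$k$ truncation of the Rieger--Nishimura frame (correctly identifying the $k=4$ case with $\Alg{C}_{16}$ of Figure~\ref{fig_Q16}), together with the monotonicity argument that $\FAlg{\Var{\mathsf{BD}_4}}{1}$ is a quotient of $\FAlg{\Var{\BDn}}{1}$ for $n\ge 4$. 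So there is no genuine difference in strategy, only in the level of detail supplied.
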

\begin{proof} The proof follows from the known fact that for $n > 3$ cyclic Lindenbaum algebra of $\BDn$ is finite and has more then 14 elements.
\end{proof}

Now, we can prove the main theorem of this section.

\begin{theorem} There is continuum many superintuitionistic logic structurally completion of which is not $\HSCpl$. 
\end{theorem}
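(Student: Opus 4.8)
The plan is to produce continuum many superintuitionistic logics that all share one and the same \emph{finite} Lindenbaum algebra on a single variable, this algebra having more than $14$ elements, so that Corollary \ref{cor-lnonpr} applies to every one of them simultaneously. All the variation between the logics will be pushed into formulas in $\ge 2$ variables.

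First I would fix the variety $\vvar_0 \bydef \Var{\mathsf{BD}_4}$ of Heyting algebras of depth at most $4$ and set $\Alg{D} \bydef \FAlg{\vvar_0}{1}$; as recorded in the proof of Corollary \ref{cor-lnonpr}, $\Alg{D}$ is finite and has more than $14$ elements. Next, I recall the classical source of continuum many intermediate logics: an infinite antichain $\Alg{A}_1,\Alg{A}_2,\dots$ of finite subdirectly irreducible Heyting algebras of bounded depth (the ``fan'' algebras of depth $3$ will do), where ``antichain'' means $\Alg{A}_i \notin \CHom\CSub(\Alg{A}_j)$ whenever $i \ne j$ (see e.g.\ \cite{Chagrov_Zakh}). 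Since $\CHom\CSub(\Alg{D})$ is finite while the $\Alg{A}_i$ are pairwise non-isomorphic, after deleting finitely many of them I may assume in addition that $\Alg{A}_i \notin \CHom\CSub(\Alg{D})$ for all $i$. (If the antichain one takes happens to have depth bounded by some $d>3$, replace $\mathsf{BD}_4$ by $\mathsf{BD}_{\max\{d,4\}}$ throughout; this only enlarges $\Alg{D}$.)

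For each $S \subseteq \{1,2,\dots\}$ I put $\vvar_S \bydef \Var{\{\Alg{D}\} \cup \set{\Alg{A}_i}{i \in S}}$ and let $\LogL_S$ be the superintuitionistic logic determined by $\vvar_S$. All generators of $\vvar_S$ have depth $\le 4$, so $\vvar_S \subseteq \vvar_0$, while $\Alg{D} \in \vvar_S$ by construction. The crucial observation is that $\FAlg{\vvar_S}{1} \cong \Alg{D}$: from $\vvar_S \subseteq \vvar_0$ the free cyclic algebra $\FAlg{\vvar_S}{1}$ is a homomorphic image of $\FAlg{\vvar_0}{1} = \Alg{D}$, and from $\Alg{D} \in \vvar_S$ together with the fact that $\Alg{D}$ is generated by a single element, $\Alg{D}$ is a homomorphic image of $\FAlg{\vvar_S}{1}$; composing these two surjections yields a surjective, hence bijective, endomorphism of the finite algebra $\Alg{D}$, so the two algebras are isomorphic. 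Thus the one-variable Lindenbaum algebra of $\LogL_S$ is $\Alg{D}$, which is finite with more than $14$ elements, and Corollary \ref{cor-lnonpr} gives that $\SC{\LogL_S}$ is not $\HSCpl$. Finally I would check that distinct $S$ give distinct logics: if $i \in S\setminus S'$, then the Jankov (characteristic) formula $\chi(\Alg{A}_i)$ is refuted in $\Alg{A}_i \in \vvar_S$, so $\chi(\Alg{A}_i)\notin\LogL_S$; on the other hand $\chi(\Alg{A}_i)$ is valid in every Heyting algebra $\Alg{B}$ with $\Alg{A}_i\notin\CHom\CSub(\Alg{B})$, which by our choices covers $\Alg{D}$ and every $\Alg{A}_j$ with $j\ne i$, i.e.\ all generators of $\vvar_{S'}$, and validity of a formula is preserved by $\CHom$, $\CSub$ and $\CProd$, so $\chi(\Alg{A}_i)\in\LogL_{S'}$. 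Hence $\LogL_S\ne\LogL_{S'}$, and since there are continuum many subsets $S$ we obtain continuum many superintuitionistic logics whose structural completion is not $\HSCpl$.

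The one genuinely delicate point is the very first step: arranging that all continuum many logics have the same, sufficiently large, finite one-variable fragment. The device that makes it work is to rigidly pin the one-variable behaviour by throwing the fixed cyclic algebra $\Alg{D}$ into every $\vvar_S$ (which, together with $\vvar_S \subseteq \vvar_0$, forces $\FAlg{\vvar_S}{1}\cong\Alg{D}$) while generating all the variety-theoretic variation purely at higher arities, from an antichain of finite algebras that has been pruned so as to lie strictly ``above'' $\Alg{D}$ in the $\CHom\CSub$-ordering. The remaining ingredients — the existence of a bounded-depth antichain of finite subdirectly irreducible Heyting algebras and the standard behaviour of Jankov formulas — are classical, and I would simply cite them; note in particular that no appeal to local finiteness of $\vvar_0$ is needed, only the (known) finiteness of $\FAlg{\vvar_0}{1}$.
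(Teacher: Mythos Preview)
Your modular strategy---pin the one-variable fragment by including the fixed cyclic algebra $\Alg{D}=\FAlg{\Var{\mathsf{BD}_d}}{1}$ in every $\vvar_S$, and push all the variation into a Jankov antichain living at higher arities---is sound in outline and is a genuinely different route from the paper's construction. The paper instead builds a single family of frames $\mathcal{A}_n$ of \emph{unbounded} depth, each having $\Alg{C}_{16}$ as a homomorphic image (so $|\FAlg{\vvar_I}{1}|\ge 16$) and none admitting $\Alg{C}_{19}$ in $\CSub\CHom(\Alg{A}_n)$ (so $\Alg{RN}\notin\vvar_I$ and $\FAlg{\vvar_I}{1}$ is finite). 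Thus the paper obtains finiteness of the free cyclic algebra from a single missing cyclic algebra rather than from a global depth bound, and never needs the antichain to sit inside any $\mathsf{BD}_d$.

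Your approach, by contrast, needs the antichain itself to have bounded depth in order to secure $\vvar_S\subseteq\Var{\mathsf{BD}_d}$, and here there is a real gap. ``Depth-$3$ fans'' (a root, an antichain of $n$ middle points, and tops above them) do \emph{not} form a Jankov antichain: for $m\le n$ the $m$-fan is a p-morphic image of the $n$-fan obtained by collapsing middle points, so dually the smaller algebra is a subalgebra of the larger one and they sit in a chain. Moreover the standard Jankov-type antichains in \cite{Chagrov_Zakh} are of unbounded depth, so the citation does not cover what you need. Bounded-depth infinite antichains of finite subdirectly irreducible Heyting algebras do exist (already in depth $3$, as witnessed by the fact that $\mathsf{BD}_3$ has continuum many subvarieties), but you must either supply a correct construction, or drop the depth bound and argue finiteness of $\FAlg{\vvar_S}{1}$ the paper's way by keeping a fixed $\Alg{C}_k$ out of $\CSub\CHom$ of every $\Alg{A}_i$. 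Finally, throughout your Jankov argument the relevant class operator is $\CSub\CHom$ (subalgebra of a homomorphic image), not $\CHom\CSub$; with the operators in the wrong order neither the pruning step nor the distinctness step literally goes through.
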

\begin{proof} We will use Corollaries \ref{cor-lnonpr} and \ref{cor-vnonpr} and construct continuum many varieties free cyclic algebras of which contain more then 13 elements. We will use the Jankov's argument: let $N_2$ be a set of all natural numbers greater than 2 and let us consider algebras $\Alg{A}_m, m \in N_2$ corresponding to frames $\mathcal{A}_m$ depicted at Fig.\ref{fig_Q16}. Let $I$ be an arbitrary set of $N_2$, and let $\vvar_I$ be a variety generated by algebras $\Alg{A}_m, m \in I$.  Observe that $\Alg{C}_{16}$ is a homomorphic image of algebras $\Alg{A}_m$, hence $\FAlg{\vvar_I}{1}$ has at least 16 elements. On the other hand, algebra $\Alg{C}_{19}$ - the cyclic algebra with 19 elements - is not embedded in either of algebras $\Alg{A}_m, m \in I$ or their homomorphic images. Hence, the characteristic formula $X(\Alg{C}_{19})$ (see \cite{Jankov_1969}) of $\Alg{C}_{19}$ is valid in each $\Alg{A}_m, m \in I$ and, hence, $X(\Alg{C}_{19})$ is valid in $\vvar_I$. Thus, $\Alg{C}_{19} \notin \vvar_I$, and this means that $\FAlg{\vvar_I}{1}$ is finite, because $\Alg{C}_{19}$ is a homomorphic image of $\Alg{RN}$, i.e. $\Alg{RN} \notin \vvar_I$. So, we have established that $\FAlg{\vvar_I}{1}$ is finite and has at least 16 elements. It is clear that there is continuum many subsets of $N_2$ and all we need is to prove that $I$ uniquely defines $\vvar_I$, that is, we need to prove that if $I_1,I_2 \subseteq N_2$ and $I_1 \neq I_2$, then $\vvar_{I_1} \neq \vvar_{I_2}$. But the latter follows from the properties of characteristic formulas (see \cite{Jankov_1969})) and the observation that if $n \neq m$, algebra $\Alg{A}_n$ is not embedded in any homomorphic image of $\Alg{A}_m$. 
\end{proof}

\subsection{Absence of the Least $\HSCpl$ Deductive System}

The goal of this Section is to demonstrate that there is not the least hereditarily complete deductive system extending $\IPC$ and, hence, the criterion similar to the one from \cite{Citkin_1978}, is impossible. 

\begin{theorem} \label{th-noleast} There is no least $\HSCpl$ deductive system above $\IPC$.
\end{theorem}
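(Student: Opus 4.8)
The plan is to exhibit two incomparable (with respect to $\DedExt$) hereditarily structurally complete deductive systems above $\IPC$, so that no single $\HSCpl$ system can be deductively weaker than both of them. Since any deductive extension of an $\HSCpl$ system is $\HSCpl$, and since $\HSCpl$ corresponds (by Proposition \ref{pr_HScplPrim}) to primitiveness of the associated quasivariety, it suffices to produce two primitive quasivarieties of Heyting algebras whose intersection (equivalently, whose join as deductive systems) is \emph{not} primitive; then no primitive quasivariety can contain both.

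First I would take, on one side, the structural completion $\SC{\mathsf{LC}}$ of the G\"{o}del--Dummett logic, which is $\HSCpl$ (its quasivariety is primitive: this is classical, and is recorded in Table \ref{table1}). On the other side I would take a carefully chosen $\HSCpl$ deductive system whose quasivariety is generated by a small finite Heyting algebra from the list in Theorem \ref{th-rnpr} --- for instance one of the cyclic algebras $\Alg{C}_n$ with $n\in\{6,8,9\}$, or more robustly a finite non-linear algebra generating a primitive quasivariety (such algebras exist by the criterion from \cite{Citkin_1978}). The key requirement is that this second system be deductively incomparable with $\SC{\mathsf{LC}}$: its quasivariety should contain a non-linearly-ordered algebra (so it is not below $\SC{\mathsf{LC}}$), while $\SC{\mathsf{LC}}$ contains linear algebras of unbounded depth that are not in the finite quasivariety (so $\SC{\mathsf{LC}}$ is not below it). Concretely, I would pick the finite algebra $\Alg{D}$ to be (the algebra of) the ``fork'' frame, or a cyclic algebra $\Alg{C}_7'$-style frame that is known to generate a primitive quasivariety, chosen so that $\Alg{D}$ is non-linear and the $3$-element chain $\Alg{C}_3$ does not lie in $\Qvar{\Alg{D}}$ below depth constraints --- the precise choice is routine given the tables in \cite{Citkin_1978}.

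The heart of the argument is then: if $\DS$ were a least $\HSCpl$ deductive system above $\IPC$, then $\DS \DedExt \SC{\mathsf{LC}}$ and $\DS \DedExt \SC{\LogL(\Alg{D})}$ would both fail to hold in the required direction --- rather, $\DS$ would be deductively \emph{below} both, so $\Qvar{\DS} \supseteq \Qvar{\SC{\mathsf{LC}}}$ and $\Qvar{\DS} \supseteq \Qvar{\SC{\LogL(\Alg{D})}}$, hence $\Qvar{\DS}$ contains linear algebras of arbitrary finite depth and also the non-linear algebra $\Alg{D}$. I would then argue that such a quasivariety cannot be primitive: it contains a finite subdirectly irreducible algebra --- built by amalgamating a long chain with a fork, along the lines of the $\mathcal{C}_{10}'$/$\mathcal{C}_{12}'$ construction in the proof of Theorem \ref{th-rnpr}, or simply $\Alg{C}_7'$ from the totally-non-projective example (Fig.~\ref{fig-totnonpr}), whose presence in any containing quasivariety already destroys primitiveness by the Proposition preceding Corollary \ref{cor-oddcycl} --- that is not weakly projective, contradicting Proposition \ref{pr-prim}. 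By Proposition \ref{pr_HScplPrim}, $\DS$ is then not $\HSCpl$, a contradiction.

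The main obstacle I anticipate is making the two witness systems genuinely incomparable while \emph{both} primitive, and then verifying that their common lower bound's quasivariety provably contains a totally non-projective (or non-weakly-projective $\qvar$-irreducible) algebra. The cleanest route is probably to arrange that $\Qvar{\DS}$, containing both a $4$-element linear algebra and the fork, must contain $\Alg{C}_7'$ (or a minor variant) as a subalgebra of a subdirect product; one checks $\Alg{C}_7'$ is totally non-projective exactly as in the worked example, and invokes ``any quasivariety containing a totally non-projective algebra is not primitive.'' The bookkeeping --- confirming local finiteness (automatic since everything relevant is generated by finite algebras, or lives inside $\Qvar{\FAlg{\Heyt}{\omega}}$ which is locally finite for one variable) and confirming the subdirect-decomposition claims on the relevant small frames --- is routine but must be done with the explicit frame diagrams, just as in Theorem \ref{th-rnpr}.
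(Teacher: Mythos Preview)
Your strategy has a real gap: every concrete candidate you name for the ``second'' $\HSCpl$ system is in fact a deductive \emph{extension} of $\SC{\IPC}$, so $\SC{\IPC}$ itself is an $\HSCpl$ common lower bound and your contradiction never materialises. Any cyclic algebra $\Alg{C}_n$ is a subalgebra of $\Alg{RN}=\FAlg{\Heyt}{1}\subseteq\FAlg{\Heyt}{\omega}$, hence $\Qvar{\Alg{C}_n}\subseteq\Qvar{\FAlg{\Heyt}{\omega}}$ and the deductive system for $\Qvar{\Alg{C}_n}$ lies above $\SC{\IPC}$; likewise $\SC{\mathsf{LC}}$ lies above $\SC{\IPC}$ since Visser's rules are derivable in $\mathsf{LC}$. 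Consequently your key claim --- that any quasivariety containing both witnesses must contain the totally non-projective $\Alg{C}_7'$ --- is false: $\Qvar{\FAlg{\Heyt}{\omega}}$ contains both and is primitive (Theorem~\ref{th-SCIPC}), so it cannot contain $\Alg{C}_7'$. (Separately, the opening reduction ``the join is not primitive, hence no primitive quasivariety contains both'' is a non sequitur on its own; you implicitly repair it via total non-projectivity, but that repair is exactly what fails here.)

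The paper's proof sidesteps all of this by an observation you overlook: $\SC{\IPC}$ is \emph{minimal} among $\HSCpl$ systems above $\IPC$, because anything strictly below it has logic $\Int$ yet is not the structural completion, hence not even structurally complete. Thus a least $\HSCpl$ system, if it existed, would have to coincide with $\SC{\IPC}$, and one only needs a single $\HSCpl$ system that $\SC{\IPC}$ does \emph{not} extend. The paper produces $\SC{\DS}_7$, the structural completion of the logic $\LogL_7$ of the $7$-element cyclic algebra: it is $\HSCpl$ because every proper extension of $\LogL_7$ is already an $\HSCpl$ logic by the criterion of \cite{Citkin_1978}, and it is not above $\SC{\IPC}$ because a concrete substitution shows that the generalized Mints rule (an instance of $V_1$, hence derivable in $\SC{\IPC}$) is not admissible in $\LogL_7$. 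The crucial missing ingredient in your plan is precisely an $\HSCpl$ system whose logic \emph{fails} to admit some Visser rule; none of your proposed candidates has that property.
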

\begin{proof} First, observe that $\SC{\IPC}$ is a minimal (relative to $\DedExt$) hereditarily structurally deductive system extending $\IPC$. Indeed, all systems between $\IPC$ and $\SC{\IPC}$ have the same logic, namely $\Int$. Thus, except for $\SC{\IPC}$, all these systems are not even structurally complete. So, it is enough to present a $\HSCpl$ deductive system $\DS$ such that $\DS$ that is not an extension of $\SC{\IPC}$.

Let $\LogL_7$ be a set of all formulas valid in $\Alg{C}_7$ - cyclic Heyting algebra with 7 elements and ket $\DS_7 \bydef \ds{\LogL_7}{\MP}$. We will prove that the following holds

\begin{tabular}{rl}
(a) & $\SC{\DS}_7$ is hereditarily structurally complete;\\
(b) & $\SC{\IPC} \ \not\DedExt \ \SC{\DS}_7$.
\end{tabular}

\textbf{Proof of (a)}. First, note that $\SC{\DS}_7$, as any structural completion, is trivially structurally complete, and we only need to demonstrate that all its proper extensions are structurally complete. But every proper extension of $\SC{\DS}_7$ has a logic that is a proper extension of $\LogL_7$, and in \cite{Citkin_1978} it had been proven that all such logics are even hereditarily structurally complete. 

\textbf{Proof of (b)}. Recall from \cite{Citkin1977} that the following substitution instance of Visser's rule $V_1$ (also known as generalized Mints' rule) is admissible in $\IPC$:
\[
M \bydef r \lor ((p_1 \to q) \to (p_1 \lor p_2))/r \lor ((p_1 \to q) \to p_1) \lor ((p_1 \to q) \to p_2).
\]
Therefore, $M$ is derivable in $\SC{\IPC}$ and all its extensions. At the same time, this rule is not admissible in $\LogL_7$: take
\[
p_1 = \neg\neg q, \ p_2 = \neg q, \text{ and } r = (\neg\neg q \to q).
\]
On one hand, we have
\[
(\neg\neg q \to q) \lor ((\neg\neg q \to q) \to(\neg\neg q \lor \neg q)) \in \LogL_7.
\]
On the other hand, we have
\[
(\neg\neg q \to q) \lor ((\neg\neg q \to q) \to \neg\neg q) \lor ((\neg\neg q \to q) \to \neg q) \notin \LogL_7, 
\]
because by the Glivenko Theorem, the above formula is equal in $\IPC$ to the following formula
\[
(\neg\neg q \to q) \lor \neg\neg q \lor \neg q,
\]
and the latter formula is not valid in $\Alg{C}_7$, that is, it is not a theorem of $\LogL_7$. So, we have established that rule $M$ is derivable in $\SC{IPC}$ and is not derivable in $\SC{\LogL_7}$, which proves (b). 
\end{proof}

\subsection{Hereditary Structural Incompleteness of $\SC{\KP}$ and $\SC{\ML}$}

Recall that $\KP$ denotes Kreisel-Putnam's logic and $\ML$ denotes Medvedev's logic. In this Section we to prove that $\SC{\KP}$ is not hereditarily structurally complete (structural incompleteness of $\KP$ was observed in \cite{Wojtylak_Problem_2004}). The author is grateful to E.~Je{\v{r}}{\'a}bek who suggested the idea of the proof. 

First, we recall that $\ML$ is structurally complete, due to \cite{Prucnal_Structural_1976}, $\ML$ is structurally complete, that is, every admissible in $\ML$ rule is derivable in it. Hence,
\begin{equation}
\SC{\ML} \DedEqv \ML.  \label{eq-MLScpl} 
\end{equation}

In \cite{Levin_Some_1969} Levin had constructed a class $\Frm^\circ$ of formulas\footnote{The definition of this class is irrelevant for our purposes, but the reader can find it in \cite{Levin_Some_1969}.} that posses the following property.

\begin{prop} (see \cite[Theorem 4]{Levin_Some_1969}) \label{pr-Levin} For any formula $A$, 
\[
\vdash_\ML A  \text{ if and only if  }  \vdash_\KP \sigma(A) \text{ for every substitution } \sigma: \Vars \to \Frm^\circ. \label{eq-pr-Levin}
\]
\end{prop}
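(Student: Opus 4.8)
The plan is to prove both implications semantically, using the Kripke completeness of $\KP$ together with the completeness of $\ML$ with respect to the finite Medvedev frames (the frames of nonempty subsets of a finite set, ordered by reverse inclusion). Everything will hinge on a single structural property that the class $\Frm^\circ$ must be designed to have, which I would isolate as a lemma before touching either direction: \emph{for every $\KP$-frame $F$ and every point $x \in F$, the upsets of the generated subframe $\mathord{\uparrow} x$ that are definable by $\Frm^\circ$-substitutions are exactly the pullbacks, along some p-morphism $f_x \colon \mathord{\uparrow} x \to M_x$ onto a Medvedev frame $M_x$, of the upsets of $M_x$.} In other words, $\Frm^\circ$-formulas read off, at each $\KP$-point, precisely a Medvedev-frame worth of definable subsets. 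Once this property is in hand, both directions follow from the standard p-morphism preservation of intuitionistic truth.

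For the forward implication, I would assume $\vdash_\ML A$ and take an arbitrary $\KP$-frame $F$, a point $x$, and a substitution $\sigma \colon \Vars \to \Frm^\circ$. The truth sets of the $\sigma(p)$ are upsets of $F$, and by the isolated property their restrictions to $\mathord{\uparrow} x$ are pullbacks under $f_x$ of a valuation $v'$ on the Medvedev frame $M_x$. Preservation of truth along the p-morphism $f_x$ then gives that $\sigma(A)$ is forced at $x$ iff $A$ is forced at $f_x(x)$ under $v'$; since $A$ is valid on every Medvedev frame, the latter holds. As $F$, $x$, and $\sigma$ were arbitrary, $\sigma(A)$ is valid on all $\KP$-frames, so by Kripke completeness $\vdash_\KP \sigma(A)$.

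For the converse I would argue contrapositively: assuming $\not\vdash_\ML A$, completeness of $\ML$ supplies a finite Medvedev frame $M$ and a valuation refuting $A$ at its root. The task is to manufacture a $\KP$-frame $F$ admitting a p-morphism $f \colon F \to M$ \emph{and} a substitution $\sigma \colon \Vars \to \Frm^\circ$ whose truth sets realize the $f$-pullback of the refuting valuation; then $\sigma(A)$ is refuted on the $\KP$-frame $F$, whence $\not\vdash_\KP \sigma(A)$. This realization step is exactly where the explicit construction of $\Frm^\circ$ does its work, and I expect it to be the main obstacle: the class must be rich enough that every Medvedev refutation can be coded by $\Frm^\circ$-substitutions inside a genuine $\KP$-frame (needed here), while being restrictive enough that at each $\KP$-point it never produces more than a Medvedev-frame's definable upsets (needed for the forward direction). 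Establishing this two-sided adequacy of $\Frm^\circ$ — simultaneously expressive for completeness and constrained for soundness — is the crux of Levin's theorem, and the remaining bookkeeping (p-morphism preservation, passage between validity and $\vdash$) is routine.
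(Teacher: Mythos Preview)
The paper does not prove this proposition at all; it quotes it as Theorem~4 of Levin's 1969 paper and explicitly declines even to state the definition of $\Frm^\circ$ (the footnote reads ``The definition of this class is irrelevant for our purposes, but the reader can find it in \cite{Levin_Some_1969}''). So there is no proof in the paper to compare your attempt against: Proposition~\ref{pr-Levin} is imported as a black box and used only to feed Corollary~\ref{cor-reduc} in the proof of Theorem~\ref{th-KPML}.

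As for the proposal on its own merits: it is a reasonable strategic outline, but it is not a proof. You correctly identify that everything hinges on the construction of $\Frm^\circ$, and you then describe the two-sided adequacy property such a class would have to satisfy --- but you neither define the class nor verify the property. In effect you have reformulated what must be true for the proposition to hold, without supplying the construction that makes it true. The ``key lemma'' you isolate (that at every point of every $\KP$-frame, the $\Frm^\circ$-definable upsets of the generated cone are exactly the pullback along some p-morphism onto a Medvedev frame) is a very strong structural claim that carries the entire weight of the argument; proving it \emph{is} Levin's theorem, and nothing in your sketch addresses it beyond asserting that it is ``the crux'' and ``the main obstacle''. A genuine proof would have to exhibit $\Frm^\circ$ concretely and establish both the soundness and completeness halves of that lemma, which you have not attempted.
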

So, in a way, $\ML$ is reduced to $\KP$. Let us consider how such reduction is linked to admissibility.


\textbf{Reducibility of Deductive Systems.}
Let $\DS_1$ and $\DS_2$ be deductive systems and $\Sigma$ be a set of substitutions. Then $\DS_1$ is $\Sigma$\textit{-reducible} to $\DS_2$ if for any formula  $A$,
\begin{equation}
\vdash_{\DS_1} A \text{ if and only if for every } \sigma \in \Sigma, \ \vdash_{\DS_2} \sigma(A). \label{sigmared}
\end{equation}

\begin{prop} \label{th-sigmatred} Let $\DS_1$ and $\DS_2$ be deductive systems, $\Sigma$ be a set of substitutions, and $\DS_1$ be $\Sigma$-reducible to $\DS_2$. Then every admissible in $\DS_2$ rule is admissible in $\DS_1$.
\end{prop}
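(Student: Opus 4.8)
The plan is to argue entirely through the intrinsic characterization of admissibility \eqref{ADM}: a rule $\Gamma/A$ is admissible in a deductive system $\DS$ precisely when every substitution that maps all of $\Gamma$ into the logic of $\DS$ also maps $A$ into that logic. Beyond this, the only ingredient needed is the elementary fact that a composition of substitutions is again a substitution and that $(\sigma\circ\tau)(C)=\sigma(\tau(C))$ for every formula $C$.

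Concretely, I would fix a rule $\ruleR\bydef\Gamma/A$ admissible in $\DS_2$ and an arbitrary substitution $\tau$ with $\tau(B)\in\LogL(\DS_1)$ for all $B\in\Gamma$, the goal being $\tau(A)\in\LogL(\DS_1)$. The first step uses the ``only if'' direction of \eqref{sigmared}: from $\vdash_{\DS_1}\tau(B)$ we get $\vdash_{\DS_2}\sigma(\tau(B))$ for every $\sigma\in\Sigma$ and every $B\in\Gamma$. The second step fixes $\sigma\in\Sigma$ and reads this as saying that the substitution $\sigma\circ\tau$ maps $\Gamma$ into $\LogL(\DS_2)$; admissibility of $\ruleR$ in $\DS_2$ then yields $\sigma(\tau(A))=(\sigma\circ\tau)(A)\in\LogL(\DS_2)$. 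Since $\sigma\in\Sigma$ was arbitrary, $\vdash_{\DS_2}\sigma(\tau(A))$ holds for all $\sigma\in\Sigma$, and the third step applies the ``if'' direction of \eqref{sigmared} to conclude $\vdash_{\DS_1}\tau(A)$. The degenerate case $\Gamma=\varnothing$ is handled by the same chain, the hypothesis on the premises being vacuous.

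I do not anticipate a real obstacle: the statement is essentially a bookkeeping exercise about uniform substitution. The one point deserving a moment's care is that $\Sigma$ need not be closed under composition nor contain the identity, but the argument never uses either property -- it only ever precomposes a fixed $\sigma\in\Sigma$ with the externally supplied $\tau$, and such a composite is automatically a substitution. It is also worth noting explicitly that only the stated direction survives: $\Sigma$-reducibility need not transfer admissibility from $\DS_1$ to $\DS_2$, since some $\sigma\in\Sigma$ may turn a $\DS_1$-admissible rule into a non-admissible instance over $\DS_2$.
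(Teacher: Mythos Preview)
Your proof is correct and follows essentially the same route as the paper's: both arguments hinge on composing the witnessing substitution with a member of $\Sigma$ and invoking both directions of \eqref{sigmared} together with the characterization \eqref{ADM}. The only cosmetic difference is that the paper argues by contraposition (from non-admissibility in $\DS_1$ to non-admissibility in $\DS_2$, picking a single $\sigma'\in\Sigma$ that witnesses the failure), whereas you argue directly and quantify over all $\sigma\in\Sigma$; the underlying idea is identical.
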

\begin{proof} Suppose a rule $\ruleR$ is admissible in $\DS_2$. We need to prove that $\ruleR$ is admissible in $\DS_1$. For this we prove the inverse statement: if $\ruleR$ is not admissible in $\DS_1$, then $\ruleR$ is not admissible in $\DS_2$. 

Suppose rule $\ruleR \bydef A_1,\dots,A_n/B$ is not admissible in $\DS_1$. Then there is a substitution $\sigma$ such that
\[
\vdash_{\DS_1} \sigma(A_i) \text{ for all } i=1,\dots,n  \text{, while}\nvdash_{\DS_1} \sigma(B).
\]
Since $\nvdash_{\DS_1}  B$, $\Sigma$-reducibility entails that there is a substitution $\sigma' \in \Sigma$ such that 
\begin{equation}
\nvdash_{\DS_2} \sigma'(\sigma(B)). \label{pr-sigmatred2}
\end{equation}
On the other hand, $\Sigma$-reducibility entails that for every $i=1,\dots,n$
\begin{equation}
\vdash_{\DS_2} \sigma'(\sigma(A_i)). \label{pr-sigmatred3}
\end{equation}
And \eqref{pr-sigmatred3} and \eqref{pr-sigmatred2} mean that $\ruleR$ is not admissible in $\DS_2$.
\end{proof}

\begin{cor} \label{cor-reduc} If a deductive system $\DS_1$ is $\Sigma$-reducible to $\DS_2$, then 
$\SC{\DS}_2 \ \DedExt \ \SC{\DS}_1$. Hence, if $\SC{\DS}_2$ is hereditarily structurally complete, so is $\SC{\DS}_1$.
\end{cor}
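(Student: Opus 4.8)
The plan is to read the corollary off Proposition~\ref{th-sigmatred} together with the elementary facts about structural completions already recorded above. First I would unfold what $\SC{\DS}_2 \DedExt \SC{\DS}_1$ demands: by the definition of $\DedExt$ this is the inclusion $\vdash_{\SC{\DS}_2}\ \subseteq\ \vdash_{\SC{\DS}_1}$, i.e.\ every rule derivable in $\SC{\DS}_2$ is derivable in $\SC{\DS}_1$. Since each $\SC{\DS}_i$ is by construction structurally complete and logically equal to $\DS_i$, Proposition~\ref{pr-admlogequal} yields $\Rules(\SC{\DS}_i) = \SC{\Rules}(\SC{\DS}_i) = \SC{\Rules}(\DS_i)$; that is, the rules derivable in $\SC{\DS}_i$ are exactly the rules admissible in $\DS_i$. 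Hence the required inclusion is equivalent to the statement that every rule admissible in $\DS_2$ is admissible in $\DS_1$, which is precisely Proposition~\ref{th-sigmatred}. So the first assertion of the corollary follows immediately.

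For the ``hence'' clause I would invoke the observation made right after the definition of hereditary structural completeness: any deductive extension of a hereditarily structurally complete deductive system is again hereditarily structurally complete. We have just shown that $\SC{\DS}_1$ is a deductive extension of $\SC{\DS}_2$, so if $\SC{\DS}_2$ is hereditarily structurally complete, then so is $\SC{\DS}_1$.

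I do not anticipate a genuine obstacle here. The only step requiring care is the bookkeeping identification $\vdash_{\SC{\DS}_i} = \Rules(\SC{\DS}_i) = \SC{\Rules}(\DS_i)$, which must be set up before Proposition~\ref{th-sigmatred} is applied, so as not to conflate ``derivable in the structural completion'' with ``admissible in the original system''. Once that is in place, the corollary is a one-line consequence of Proposition~\ref{th-sigmatred} and the monotonicity of hereditary structural completeness under deductive extensions.
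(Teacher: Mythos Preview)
Your argument is correct and matches the paper's intent: the corollary is stated without proof precisely because it is the one-line consequence of Proposition~\ref{th-sigmatred} that you describe, together with the already noted fact that any deductive extension of an $\HSCpl$ system is again $\HSCpl$. Your care in identifying $\Rules(\SC{\DS}_i)=\SC{\Rules}(\DS_i)$ via structural completeness of $\SC{\DS}_i$ and Proposition~\ref{pr-admlogequal} is exactly the bookkeeping needed, and nothing more is required.
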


\textbf{The Case of $\ML$.}
Using Levin's Theorem and Corollary \ref{cor-reduc} we can prove the following theorem. 

\begin{theorem} \label{th-KPML}Every admissible in $\KP$  rule is derivable in $\ML$. 
\end{theorem}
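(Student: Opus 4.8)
The plan is to combine Levin's reduction theorem (Proposition~\ref{pr-Levin}) with the reducibility machinery just developed. Levin's theorem says precisely that $\ML$ is $\Sigma$-reducible to $\KP$, where $\Sigma$ is the set of all substitutions $\sigma\colon\Vars\to\Frm^\circ$ into Levin's class of formulas. Hence by Corollary~\ref{cor-reduc} every admissible in $\KP$ rule is admissible in $\ML$, that is, $\SC{\KP}\ \DedExt\ \SC{\ML}$. Now invoke Prucnal's theorem \eqref{eq-MLScpl}: $\ML$ is structurally complete, so $\SC{\ML}\DedEqv\ML$, meaning that admissibility and derivability coincide in $\ML$. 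Putting these two facts together: if $\ruleR$ is admissible in $\KP$, then $\ruleR$ is admissible in $\ML$, and since $\ML$ is structurally complete, $\ruleR$ is derivable in $\ML$.

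First I would state explicitly that the family $\Sigma=\set{\sigma}{\sigma\colon\Vars\to\Frm^\circ}$ witnesses the $\Sigma$-reducibility of $\ds{\ML}{\MP}$ to $\ds{\KP}{\MP}$ in the sense of \eqref{sigmared}: this is a verbatim restatement of Proposition~\ref{pr-Levin}, once one notes that $\vdash_{\ds{\LogL}{\MP}}A$ iff $\vdash_\LogL A$ iff $A\in\LogL$ for the logics in question, so the "for every formula $A$" clause of \eqref{sigmared} matches the biconditional of Proposition~\ref{pr-Levin} exactly. Second I would apply Proposition~\ref{th-sigmatred} (or its Corollary~\ref{cor-reduc}) to conclude $\SC{\Rules}(\KP)\subseteq\SC{\Rules}(\ML)$, i.e. every rule admissible in $\KP$ is admissible in $\ML$. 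Third I would apply Prucnal's result \eqref{eq-MLScpl}, which gives $\SC{\Rules}(\ML)=\Rules(\ML)$, the set of $\ML$-derivable rules. Chaining the inclusions, $\SC{\Rules}(\KP)\subseteq\Rules(\ML)$, which is the claim.

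There is essentially no obstacle: the theorem is a three-line corollary of results already in hand. The only point requiring minor care is the bookkeeping that matches the hypothesis of Proposition~\ref{th-sigmatred} — namely checking that $\ML$ and $\KP$, as deductive systems with Modus Ponens, satisfy the $\Sigma$-reducibility condition in the precise form \eqref{sigmared}, including the edge case of how the empty-premise/theoremhood clause interacts with substitutions. If one wanted a fully self-contained argument (not using Prucnal as a black box), one could instead observe that Levin's reduction together with the fact that $\Frm^\circ$-substitutions are closed under composition lets one transfer any $\KP$-admissible rule's non-derivability witness in $\ML$ down to $\KP$ directly; but since \eqref{eq-MLScpl} is already recorded in the excerpt, the cleanest route is simply: $\SC{\KP}\DedExt\SC{\ML}\DedEqv\ML$, hence admissibility in $\KP$ implies derivability in $\ML$.
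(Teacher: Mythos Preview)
Your proposal is correct and follows essentially the same route as the paper: identify Levin's theorem as a $\Sigma$-reducibility of $\ML$ to $\KP$, apply Corollary~\ref{cor-reduc} to get $\SC{\KP}\DedExt\SC{\ML}$, and then invoke Prucnal's structural completeness of $\ML$ \eqref{eq-MLScpl} to obtain $\SC{\KP}\DedExt\SC{\ML}\DedEqv\ML$. The paper's proof is exactly this chain, so your argument matches it step for step.
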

\begin{proof} Indeed, \eqref{eq-pr-Levin} means that $\ML$ is $\Sigma'$-reducible to $\KP$, where $\Sigma' \bydef \set{\sigma \in \Sigma}{\sigma: \Vars \to \Frm^\circ}$. Hence, by Corollary \ref{cor-reduc},
\begin{equation}
\SC{\KP} \DedExt \SC{\ML}. \label{eq-th-KPML-1}
\end{equation}
Now, we can use \eqref{eq-MLScpl} and obtain
\begin{equation}
\SC{\KP} \DedExt \SC{\ML} \DedEqv \ML.  \label{eq-th-KPML-2} 
\end{equation}
And \eqref{eq-th-KPML-2} means that every admissible in $\KP$ rule is derivable in $\ML$.
\end{proof}

Recall also (see \cite{Citkin_1978}), that $\ML$ is not hereditarily structurally complete. Hence, by Corollary \ref{cor-reduc} and \eqref{eq-th-KPML-2}, we have

\begin{cor} $\SC{\KP}$ is not hereditarily structurally complete.
\end{cor}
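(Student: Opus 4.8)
The plan is to obtain the statement as a short contrapositive consequence of what has already been assembled: the chain $\SC{\KP} \DedExt \SC{\ML} \DedEqv \ML$ recorded in \eqref{eq-th-KPML-2}, the fact due to \cite{Citkin_1978} that $\ML$ is \emph{not} hereditarily structurally complete, and the elementary observation (noted right after Theorem~\ref{th-HScpl}) that every deductive extension of a hereditarily structurally complete system is again hereditarily structurally complete.

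First I would note that $\SC{\ML} \DedEqv \ML$ and that hereditary structural completeness is an invariant of the consequence relation: structural completeness is, by Proposition~\ref{pr-DSSCpl}, and two deductively equal systems have exactly the same deductive extensions, so they stand or fall together with respect to $\HSCpl$. Hence $\ML \notin \HSCpl$ gives $\SC{\ML} \notin \HSCpl$. Second, \eqref{eq-th-KPML-2} exhibits $\SC{\ML}$ as a deductive extension of $\SC{\KP}$. Third, if $\SC{\KP}$ were hereditarily structurally complete, then its deductive extension $\SC{\ML}$ would be too, a contradiction; therefore $\SC{\KP} \notin \HSCpl$. Equivalently, one can route the argument through Corollary~\ref{cor-reduc} with $\DS_1 = \ML$ and $\DS_2 = \KP$, since the proof of Theorem~\ref{th-KPML} already verified that $\ML$ is $\Sigma'$-reducible to $\KP$; or one can phrase it directly: $\ML$ is structurally complete but not hereditarily so, hence has a deductive extension $\DS'$ that is not structurally complete, and by transitivity of $\DedExt$ applied to $\SC{\KP} \DedExt \SC{\ML} \DedEqv \ML \DedExt \DS'$ this $\DS'$ is also a deductive extension of the (structurally complete) system $\SC{\KP}$, so $\SC{\KP} \notin \HSCpl$.

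I do not expect a genuine obstacle at this point, since all the substance lies in Levin's reduction (Proposition~\ref{pr-Levin}), in Prucnal's structural completeness of $\ML$ (equation~\eqref{eq-MLScpl}), and in Corollary~\ref{cor-reduc}, all of which are already in hand. The one point deserving a sentence of care is that $\ML$ is itself structurally complete, so its failure of the hereditary property genuinely occurs at some proper deductive extension; the argument above is legitimate precisely because transitivity of $\DedExt$ transports that extension (equivalently, the mere existence of a non-structurally-complete deductive extension) from $\SC{\ML}$ up to $\SC{\KP}$.
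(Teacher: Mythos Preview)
Your proposal is correct and follows essentially the same route as the paper: the paper's entire argument is the one-line observation that $\ML$ is not hereditarily structurally complete (by \cite{Citkin_1978}), and then an appeal to Corollary~\ref{cor-reduc} together with \eqref{eq-th-KPML-2}. Your write-up unpacks this in more detail (and offers two equivalent phrasings), but the substance is identical.
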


\subsection{Hereditary Structural Completeness and Finite Model Property}

In this Section we show that there is continuum many hereditarily structurally complete deductive systems that cannot be defined by their finite models, which is different from the situation with deductive systems with $\MP$ as a single inference rule. 

A logic $\LogL$ is said to have the \textit{finite model property} (fmp for short) if for every formula $A \notin \LogL$ there is a finite model of $\LogL$ in which $A$ is refuted. A logic $\LogL$ has the \textit{finite model property relative to admissibility} (a-fmp for short), if for every rule $\ruleR$ not admissible in $\LogL$ there is a finite model of $\LogL$ in which all admissible in $\LogL$ rules are valid and $\ruleR$ is not valid, that is, there is a finite model of $\SC{\ds{\LogL}{\MP}}$ that refutes $\ruleR$. In other words, $\DS$ has the a-fmp if each rule valid in every finite model of $\SC{\DS}$ is admissible in $\DS$. 

In \cite{Rybakov_Intermediate_1993} Rybakov described all superintutionistic logics enjoying a-fmp. In \cite[Section 3.6]{Goudsmit_PhD} Goudsmit presented some classes of superintuitionistic logics that do not have the a-fmp. The normal modal logics with and without the a-fmp are studied in \cite{Rybakov_Kiatkin_Fmp_1999, Rybakov_Kiatkin_2001}. 

In this Section we establish connections between a-fmp and hereditary structural completeness.

\begin{theorem} \label{th-a-fmp} If a deductive system $\DS$ admits $V_1$ and enjoys the a-fmp, then $\SC{\DS}$ is hereditarily structural complete.
\end{theorem}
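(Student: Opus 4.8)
The plan is to reduce the statement to the claim that $\DS$ admits every Visser rule $V_n$, $n \ge 1$, and then to conclude exactly as in Theorem~\ref{th-SCIPC}. Suppose we know that $\DS$ admits all the rules $V_n$. Then $\DS$ is a superintuitionistic deductive system admitting Visser's rules, and by \cite[Theorem~3.9]{Iemhoff_Intermediate_2005} Visser's rules together with $\MP$ form a base of admissible rules in every deductive extension of $\IPC$ --- in particular in every deductive extension of $\DS$ --- that admits them. Hence, taking $\Rules_b$ to be Visser's rules together with $\MP$, the hypotheses of Theorem~\ref{th-HSCplB} are satisfied, and it yields that $\SCDS$ is $\HSCpl$. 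So everything comes down to showing that each $V_n$ is admissible in $\DS$.

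The a-fmp hypothesis does this work. First, the rules $V_n$ are not independent: substituting $p_{n+1} \mapsto p_n$ and $q_{n+1} \mapsto q_n$ (and relabelling the remaining variables) turns $V_{n+1}$ into $V_n$, so $V_n$ is a substitution instance of $V_{n+1}$; thus admissibility of $V_{n+1}$ implies that of $V_n$, and $V_1, V_2, \dots$ is an increasing chain in deductive strength of which $\DS$ is assumed to admit only the weakest. Now, by the a-fmp, a rule is admissible in $\DS$ as soon as it is valid in every finite model of $\SCDS$, and $V_1$, being a rule of $\SCDS$, is valid in all of them. So it suffices to prove the purely algebraic fact $(\dagger)$: every finite Heyting algebra in which $V_1$ is valid validates $V_n$ for every $n \ge 1$. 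Granting $(\dagger)$, each $V_n$ is valid in every finite model of $\SCDS$, hence admissible in $\DS$, and we are done.

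The proof of $(\dagger)$ is the technical heart of the argument and the step I expect to be the main obstacle. In Kripke-frame language it says that a finite rooted frame validating $V_1$ possesses the full ``extension property'' of \cite{Iemhoff_Intermediate_2005}, i.e. validates all $V_n$. I would approach it by induction on $n$, transforming a valuation witnessing the failure of $V_{n+1}$ on a finite Heyting algebra into one witnessing the failure of $V_n$ on the same algebra --- roughly, by fusing two of the implicational premises $p_i \to q_i$ of $V_{n+1}$ into a single premise --- and iterating down to a refutation of $V_1$. Some care is needed: the naive fusion (exploiting that $e = \one \to e$ for every element $e$ of a Heyting algebra) trivialises one of the conclusion-disjuncts $e \to p_j$, so either the finiteness of the algebra must be used more subtly, or $(\dagger)$ should be read off directly from the frame-theoretic extension-property analysis in the literature. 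Once $(\dagger)$ is in hand, the first two paragraphs together with Theorem~\ref{th-HSCplB} complete the proof.
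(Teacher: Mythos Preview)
Your overall strategy is exactly the paper's: show that every $V_n$ is admissible in $\DS$, deduce that $\SCDS$ deductively extends $\SC{\IPC}$, and invoke Theorem~\ref{th-SCIPC} (equivalently, Theorem~\ref{th-HSCplB} with Visser's rules as the relative base). The final paragraph of your argument and the paper's coincide almost verbatim.

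The difference lies precisely where you locate it, at $(\dagger)$. The paper does not attempt the induction you sketch; instead it observes that the generalized Mints rule $M$ (the one displayed in the proof of Theorem~\ref{th-noleast}) is a substitution instance of $V_1$, so $M$ is valid in every finite model of $\SCDS$, and then cites \cite[Corollary~2]{Citkin_1977_D1}: any finite Heyting algebra validating $M$ validates \emph{every} rule admissible in $\IPC$, in particular every $V_n$. That citation is doing the work of your $(\dagger)$, and indeed a stronger statement. Your inductive plan, as you yourself note, breaks down under the naive fusion; the result in \cite{Citkin_1977_D1} is proved not by reducing $V_{n+1}$ to $V_n$ but via a frame-theoretic characterisation of the finite algebras validating $M$ (they are exactly the finite subalgebras of $\FAlg{\Heyt}{\omega}$, hence lie in $\Qvar{\SC{\IPC}}$). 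So your proposal is correct as a plan, and your identification of $(\dagger)$ as the crux is accurate, but the actual proof of $(\dagger)$ requires the cited external result rather than the direct induction you outline.

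A side remark: your claim that $V_n$ is a substitution instance of $V_{n+1}$ is right only up to $\IPC$-equivalence (one gets a repeated conjunct in the premise and a repeated disjunct in the conclusion), but since this observation plays no role in the argument it is harmless.
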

\begin{proof} First, recall that the generalized Mints rule $M$ is a substitution instance of $V_1$, hence, admissibility of $V_1$ entails admissibility of $M$. Now, we can apply \cite[Corollary 2]{Citkin_1977_D1} and conclude that all admissible in $\Int$ rules, and, therefore, all Visser's rules are valid in every finite model of $\SCDS$. Since $\DS$ enjoys the a-fmp, all Visser's rules are admissible in $\SC{\DS}$, and this means that $\SCDS$ is a deductive extension of $\SC{\IPC}$. By Theorem \ref{th-SCIPC}, $\SC{\IPC}$ is hereditarily structurally complete, so, $\SCDS$ is hereditarily structurally complete.
\end{proof}

Let us note that a-fmp is a property of structural completion of a deductive system $\DS$ rather than property of $\DS$ per se. This is to say that, any two logically equivalent deductive systems either both have the a-fmp, or both do not have the a-fmp. Hence, if a deductive system $\DS \bydef \ds{\Ax}{\Rules}$ enjoys the a-fmp, then deductive system $\ds{\LogL(\DS)}{\MP}$ enjoys the a-fmp. The Theorem 7 of \cite{Rybakov_Intermediate_1993} (see also \cite[Theorem 6.3.5]{Rybakov_Book}) states that there is continuum many deductive extensions of $\SC{\IPC}$. On the other hand, there is only countable many hereditarily structural complete superintuitionistic logics (see e.g. \cite[Theorem 5.4.10]{Rybakov_Book}). Hence, the following holds.

\begin{cor} \label{cor-a-fmp-card}  There is continuum many hereditarily structural complete deductive systems without the a-fmp.
\end{cor}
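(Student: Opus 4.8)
The statement to prove is Corollary \ref{cor-a-fmp-card}: there exist continuum many hereditarily structurally complete deductive systems without the a-fmp. The plan is to play two cardinality facts against one another, using Theorem \ref{th-a-fmp} as the bridge. First I would invoke the cited result of Rybakov (\cite[Theorem 7]{Rybakov_Intermediate_1993}, equivalently \cite[Theorem 6.3.5]{Rybakov_Book}), which produces continuum many pairwise distinct deductive extensions of $\SC{\IPC}$. Since every deductive extension of $\SC{\IPC}$ is a deductive extension of a hereditarily structurally complete system (Theorem \ref{th-SCIPC}), each of these continuum many systems is itself hereditarily structurally complete — recall that any deductive extension of an $\HSCpl$ system is $\HSCpl$. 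This already gives continuum many $\HSCpl$ deductive systems; the work is to show that all but countably many of them fail the a-fmp.

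Next I would bring in the second cardinality fact: there are only countably many hereditarily structurally complete \emph{superintuitionistic logics} (\cite[Theorem 5.4.10]{Rybakov_Book}). The key observation linking the two facts is the one noted in the paragraph preceding the corollary: the a-fmp is a property of the structural completion, hence of the \emph{logic}, not of the particular presentation of the deductive system. So if $\DS \bydef \ds{\Ax}{\Rules}$ is one of our continuum many $\HSCpl$ deductive extensions of $\SC{\IPC}$ and it \emph{does} enjoy the a-fmp, then the associated axiomatic system $\ds{\LogL(\DS)}{\MP}$ also enjoys the a-fmp, and it admits $V_1$ (since $\DS$ extends $\SC{\IPC}$, in which all Visser rules are derivable, so $V_1$ is admissible in $\LogL(\DS)$). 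Theorem \ref{th-a-fmp} then forces $\SC{\ds{\LogL(\DS)}{\MP}}$ to be $\HSCpl$, so $\LogL(\DS)$ is a hereditarily structurally complete superintuitionistic logic.

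Now I would count. Map each of the continuum many $\HSCpl$ deductive systems $\DS$ that enjoy the a-fmp to its logic $\LogL(\DS)$; by the previous paragraph this lands inside the countable set of hereditarily structurally complete superintuitionistic logics. But distinct deductive extensions of $\SC{\IPC}$ can share a logic (indeed this is exactly the phenomenon the paper exploits elsewhere), so this map need not be injective — however, that is fine, because I only need the \emph{set of values} to be small: the systems enjoying the a-fmp partition into at most countably many classes according to their logic, and within each class I must argue there are only countably many distinct deductive systems. For this I would appeal to the fact that $\SCDS$ is the greatest system with logic $\LogL(\DS)$ (Proposition \ref{pr_CompGr}) together with the finitary character of deductive extensions, concluding that there can be at most countably many structurally complete deductive systems with a given logic; since $\HSCpl$ systems are structural completions of their logics, each $\HSCpl$ system is $\DedEqv$ to $\SC{\ds{\LogL(\DS)}{\MP}}$ and so is \emph{determined} by its logic up to deductive equality. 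Hence there are at most countably many $\HSCpl$ deductive systems (up to $\DedEqv$) enjoying the a-fmp. Subtracting from the continuum many $\HSCpl$ deductive extensions of $\SC{\IPC}$ leaves continuum many $\HSCpl$ deductive systems without the a-fmp.

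The main obstacle is the bookkeeping in the last paragraph: one must be careful that "continuum many deductive extensions of $\SC{\IPC}$" means continuum many up to $\DedEqv$ (which is what Rybakov's theorem delivers), and that an $\HSCpl$ deductive system is, up to $\DedEqv$, the structural completion of its logic — so that the map $\DS \mapsto \LogL(\DS)$, restricted to $\HSCpl$ systems, is injective modulo $\DedEqv$. Once that identification is in place, the argument is a clean cardinality subtraction: $2^{\aleph_0}$ many $\HSCpl$ systems, at most $\aleph_0$ of which can enjoy the a-fmp (those are exactly the ones whose logic is hereditarily structurally complete, and there are only countably many such logics), leaving $2^{\aleph_0}$ many without it.
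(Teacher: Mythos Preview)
Your overall strategy---cardinality subtraction using Rybakov's continuum of extensions of $\SC{\IPC}$ against the countability of $\HSCpl$ superintuitionistic logics---is exactly the route the paper's one-sentence argument takes. Your bookkeeping in the last paragraph is also correct: an $\HSCpl$ system is, up to $\DedEqv$, the structural completion of its logic, so the map $\DS\mapsto\LogL(\DS)$ is injective on $\HSCpl$ systems. The gap lies in the bridge step.

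The step ``Theorem \ref{th-a-fmp} then forces $\SC{\ds{\LogL(\DS)}{\MP}}$ to be $\HSCpl$, so $\LogL(\DS)$ is a hereditarily structurally complete superintuitionistic logic'' is a non sequitur. Theorem \ref{th-a-fmp} concludes only that the \emph{structural completion} is $\HSCpl$; it does \emph{not} conclude that the axiomatic system $\ds{\LogL(\DS)}{\MP}$ is $\HSCpl$. These are precisely the two distinct columns of Table \ref{table1}, and the whole thrust of the paper is that they can differ. Moreover, applied to a system $\DS$ that already deductively extends $\SC{\IPC}$, the conclusion of Theorem \ref{th-a-fmp} is vacuous: you already know $\SC{\DS}\DedEqv\DS$ is $\HSCpl$, so the theorem returns nothing new. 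For a concrete counterexample to your implicit claim ``a-fmp and extends $\SC{\IPC}$ $\Rightarrow$ $\LogL(\DS)$ is an $\HSCpl$ logic'', take $\DS=\SC{\IPC}$ itself: it trivially extends $\SC{\IPC}$, it is $\HSCpl$ by Theorem \ref{th-SCIPC}, and $\Int$ enjoys the a-fmp (Rybakov's classical result), yet $\Int$ is \emph{not} an $\HSCpl$ superintuitionistic logic. The same happens with $\mathsf{KC}$. Hence the map $\DS\mapsto\LogL(\DS)$, restricted to systems with a-fmp, does \emph{not} land inside the countable set of $\HSCpl$ logics, and your subtraction collapses.

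The paper's own justification is just the clause ``Hence, the following holds'' after quoting the two cardinality facts; it does not spell out the connecting step and appears to rely on further content of the cited Rybakov results rather than on Theorem \ref{th-a-fmp}.
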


\subsection{Open Problems}

In conclusion, let us point out some open problems. We start with decidability of hereditary structural completeness that may be presented in two different ways.

\begin{problem} \label{prb1} Is there an algorithm that, given a formula $A$, decides whether the structural completion of $\ds{\Ax^i + A}{\MP}$ is hereditarily structural complete? 
\end{problem}

\begin{problem} \label{prb2} Is there an algorithm that, given a finite Heyting algebra $\Alg{A}$, decides whether the structural completion of logic of $\Alg{A}$ (that is, the structural completion of  $\ds{\LogL(\Alg{A})}{\MP}$) is hereditarily structural complete? 
\end{problem}

Since structural completion of $\IPC$ is $\HSCpl$, the following problem is important.

\begin{problem} \label{prob-prim1} Is there an algorithm that, given a formula $A$, decides whether the logic $\ds{\Ax^i + A}{\MP}$ admits all admissible in $\IPC$ rules (i.e. admits all Visser's rules)? 
\end{problem}

Let us note that, due to Theorem 6.3.6 from \cite{Rybakov_Book}, there is an algorithm that, given a finite Heyting algebra $\Alg{A}$, decides whether logic $\LogL(\Alg{A})$ admits all Visser's rules.

\begin{problem} \label{prob-prim2} Is there an algorithm that, given a finite Heyting algebra $\Alg{A}$, decides whether a deductive system defined by $\Alg{A}$ is hereditarily structural complete? In other words, is there an algorithm that, given a finite Heyting algebra $\Alg{A}$, decides whether quasivariety $\Qvar{\Alg{A}}$ is primitive?
\end{problem}

Let us observe that in spite of Proposition \ref{pr-fingen}, the positive answer to Problem \ref{prob-prim2} entails a positive answer to Problem \ref{prb2}. 

\bibliographystyle{acm}

\end{document}